\documentclass[12pt]{article}
\usepackage{amsthm}
\usepackage{amsmath}
\usepackage{amsfonts}
\usepackage{graphicx}
\usepackage{latexsym}
\usepackage{amssymb}

\usepackage{comment}
\usepackage{array,booktabs,longtable}

\usepackage[hidelinks]{hyperref}
\usepackage[nameinlink,capitalise,noabbrev]{cleveref}
\usepackage{xcolor}
\newcommand{\abs}[1]{\left|#1\right|}

\newcommand{\field}[1]{\mathbb{#1}}

\newcommand{\dS}{\field{S}}

\newcommand{\cC}{{\cal C}}
\newcommand{\cD}{{\cal D}}

\newcommand{\cS}{{\cal S}}
\newcommand{\cP}{{\cal P}}

\newcommand{\bldc}{{\mathbf{c}}}
\newcommand{\bldu}{{\mathbf{u}}}
\newcommand{\bldv}{{\mathbf{v}}}
\newcommand{\bldx}{{\mathbf{x}}}
\newcommand{\bldy}{{\mathbf{y}}}
\newcommand{\bldz}{{\mathbf{z}}}

\newcommand{\nbinw}{\binom{[n]}{w}}

\newcommand{\mmod}{{\mbox{mod}}}
\newcommand{\nminusw}{\bar{\omega}}

\newtheorem{defn}{Definition}
\newtheorem{theorem}{Theorem}
\newtheorem{lemma}{Lemma}
\newtheorem{corollary}{Corollary}

\newtheorem{proposition}{Proposition}

\numberwithin{theorem}{section}
\numberwithin{lemma}{section}
\numberwithin{corollary}{section}
\numberwithin{proposition}{section}

\begin{document}

\bibliographystyle{plain}

\title{Perfect Codes in the Johnson Scheme Hardly Exist}
\author{
{\sc Tuvi Etzion}\thanks{Technion Israel Institute of Technology, Henry and Marilyn Taub Faculty of Computer Science,
Haifa 3200003, Israel, e-mail:{\tt etzion@cs.technion.ac.il}.} \and
{\sc Xiande Zhang}\thanks{School of Mathematical Sciences
University of Science and Technology of China, Hefei, Anhui, 230026, China,
e-mail:{\tt drzhangx@ustc.edu.cn}.} \and
{\sc Wenjie Zhong}\thanks{School of Mathematical Sciences
University of Science and Technology of China, Hefei, Anhui, 230026, China,
e-mail:{\tt zhongwj@mail.ustc.edu.cn}.}}
\maketitle

\begin{abstract}

In his pioneering work from 1973, Delsarte conjectured that there are
no nontrivial perfect codes in the Johnson scheme J$(n,w)$. While in most other important schemes
the existence problem for perfect codes was settled, the problem is still open in
the Johnson scheme. In this work, we considerably reduce the possible existence of such codes.
We prove that there are no $e$-perfect codes in the Johnson scheme when
${e \not\in \{1,2,4,9,10,12,16\}}$. These seven cases will be considered and solved in a follow-up paper.
\end{abstract}
%%%%%%%%%%%%%%%%%%%%%%%%%%%%%%%%%%%%%%%%%%%%%%%%%%%%%%%%%%%%%%%%%%%%%%
%%%%%%%%%%%%%%%%%%%%%%%%%%%%%%%%%%%%%%%%%%%%%%%%%%%%%%%%%%%%%%%%%%%%%%
%%%%%%%%%%%%%%%%%%%%%%%%%%%%%%%%%%%%%%%%%%%%%%%%%%%%%%%%%%%%%%%%%%%%%%
\section{Introduction}

Codes that attain a sphere packing bound are called perfect.
Such codes have always attracted the attention of coding theorists and mathematicians.
In the Hamming scheme, all perfect codes over finite fields are known~\cite[Chapter 6]{McSl77}.
They exist for only a relatively small number of parameters, while
elsewhere it was proved \cite{vLin,McSl77,Tie73,ZiLe73} that they cannot exist.
The nonexistence proof is based on Lloyd's polynomials~\cite{Llo57}. For non-field
alphabets only trivial codes are known and by similar methods it
was proved \cite{Best83} that for most other parameters they do not exist, with a possible exception of $4$ radii.
There is no debate that the most important scheme in coding theory is the Hamming scheme
and the second one is the Johnson scheme~\cite{AAK01},\cite[Chapter 21]{McSl77}.
There is a debate, to which we do not intend to interfere which is the third most important scheme.
However, for most of these schemes of less importance in coding theory, the existence
problem for perfect codes is settled~\cite{Etz22}.

In the Johnson scheme J$(n,w)$ (also called the Johnson graph), whose importance is in both coding theory
and combinatorics, the elements are binary words of length $n$ and weight $w$. The distance, $d(\bldx,\bldy)$, between
two such words $\bldx=(x_1,x_2,\ldots,x_n)$ and $\bldy=(y_1,y_2,\ldots,y_n)$, is half of their Hamming distance, i.e.,
$$
d(\bldx,\bldy)= \abs{ \{ i ~:~ x_i=1 ~ \text{and}~ y_i=0, ~ 1 \leq i \leq n \} }= \abs{ \{ i ~:~ x_i=0 ~ \text{and}~ y_i=1, ~ 1 \leq i \leq n \} }.
$$
This framework for the Johnson scheme can be described in terms of sets. The elements of J$(n,w)$
(also, vertices of the graph) are $w$-subsets of an $n$-set, $[n] \triangleq \{ 1,2,\ldots,n\}$, and the distance, $d(X,Y)$,
between two such subsets $X$ and $Y$ is
$$
d(X,Y)= \abs{X \setminus Y} = \abs{Y \setminus X}.
$$

Similarly to the Hamming scheme,
there are some basic parameters, results, and properties of the Johnson scheme that are used for dealing with perfect codes
in the scheme and they will be considered also in this work.
An $e$-ball (called by mistake an $e$-sphere in many places)
with a center at a $w$-subset $X$ is the set of all $w$-subsets in J$(n,w)$
that are at distance at most $e$ from $X$. The size of such a ball, $\Phi_{e}(n,w)$, is
\begin{equation}
\label{eq:ball_size}
\Phi_{e}(n,w) = \sum_{i=0}^e \binom{w}{i} \binom{n-w}{i} .
\end{equation}
An $e$-perfect code in J$(n,w)$ is a set $\cC$ of $w$-subsets in J$(n,w)$ such that the $e$-balls
around the codewords of $\cC$ form a partition of J$(n,w)$.
We say that such a code $\cC$ has radius $e$.
Hence, by the sphere-packing bound, the number of codewords of an $e$-perfect code $\cC$ in J$(n,w)$ is
\begin{equation}
\label{eq:code_size}
\abs{\cC} = \frac{\binom{n}{w}}{\Phi_{e}(n,w)}.
\end{equation}
This implies that if there exists an $e$-perfect code in J$(n,w)$, then $\Phi_{e}(n,w)$ divides $\binom{n}{w}$.
This is a very basic property, and we will see that it can be generalized so that the ball $\Phi_{e}(n,w)$
must divide an extensive range of binomial coefficients, where $\binom{n}{w}$ is a special case.
As a consequence, the divisibility properties of $\Phi_{e}(n,w)$
will be a major tool in proving the nonexistence of perfect codes in J$(n,w)$.

The connection between association schemes and coding theory was made in the seminal work of Delsarte~\cite{Del73}.
Except for very trivial codes defined for all metrics, he noticed that another family of trivial perfect codes
exists in J$(2w,w)$, where $w=2k+1$, and the radius of the code is $e=k$.
The code consists of only two codewords, a $w$-subset and its complement.
He conjectured by that these are the only perfect codes in J$(n,w)$.
In his novel work from 1973, Delsarte wrote on page 55 (the following lines are the exact quote):

\begin{quote}
``After having recalled that there are ``very few'' perfect codes
in the Hamming schemes, one must say that, for $1 < \delta < n$,
there is not a single one known in the Johnson schemes. It is tempting to
risk the conjecture that such codes do not exist. Certain results
contained in the present work could be useful to attack this
problem; especially the generalized Lloyd theorem of sec. 5.2.2
and theorem 4.7 about $t$-designs.''
\end{quote}
Unfortunately, Delsarte did not solve his conjecture, and many attempts to solve it in the years that followed
were only partially successful. In his plenary talk on the International Symposium on Information Theory, held in Seattle during July 2006,
Alexander Vardy mentioned the existence problem of nontrivial perfect codes in the Johnson scheme as
one of the major important difficult problems remaining to be solved in coding theory. From this point on
whenever a perfect code is mentioned, it will be a nontrivial one.

The current work will make a big step in solving this conjecture and a follow-up work~\cite{ZhZh26} will solve the
problem completely. The main technique used in this paper is to consider the ball of radius $e$ in the Johnson scheme.
The size of this ball must divide the size of the space $\binom{n}{w}$, and this size is proved to be square-free.
Hence, one direction is to prove that for many parameters this ball is not square-free. A second direction is to consider some properties
of the primes that divide this ball and to show, by using the prime distribution,
that some of the primes in this factorization do not satisfy the required properties.

The remainder of the paper is organized as follows. \cref{sec:survey} presents some preliminary definitions and known results
required for our exposition. It also briefly surveys the known techniques associated with this problem.
\cref{sec:strength} is devoted to excluding possible perfect codes based on designs that are either formed by the code
or embedded in the code. In particular, we
examine the strength of an $e$-perfect code, where $e \geq 2$. As the code has a larger strength, it implies
more divisibility conditions for the size of a perfect code. In previous work only strength at least $\lfloor \frac{w}{2} \rfloor$
was proved for $e \geq 2$ when $w$ is large enough, and we prove strength at least $w - e \sqrt{w}$ for all $w$ when $e \geq 2$.
Furthermore, as a consequence, we show that if there exists an $e$-perfect code in J$(n,w)$, then $\Phi_e(n,w)$ must be square-free.
An immediate conclusion is that if $p^2$ divides $e+1$ for some prime $p$, then there are no $e$-perfect codes.
The final conclusion of this section is that there are no $e$-perfect codes when $e \equiv 3,~6$ or $7~(\mmod~8)$.
In \cref{sec:sieve} we continue to exclude radii for which no perfect codes exist in the Johnson scheme.
The section starts with two auxiliary lemmas which are used to present three theorems excluding perfect codes
for various radii. The section concludes with the only $21$ radii, which were not excluded for the existence of
perfect codes up to radius 100.
\cref{sec:divide_ball} is dealing with the primes that divide the size of the ball $\Phi_{e}(n,w)$.
First, it shows that a short interval above $e$ every prime must divide the size of the ball $\Phi_{e}(n,w)$
if an $e$-perfect code exists in J$(n,w)$.
After that it presents a theorem
implying a sequence of binomial coefficients that must be divisible by the size of the ball $\Phi_{e}(n,w)$.
As a consequence it must also divide the greatest common divisor of these binomial coefficients. Finally, it evaluates the exact primes
that participate in this greatest common divisor and proves that this greatest common divisor is square-free, implying the previous result
that the size of the ball is also square-free.
In \cref{sec:JohnsonAlgebra} the algebra of the Johnson association scheme is presented and in particular
Lloyd's polynomials which have a crucial role in proving the nonexistence of perfect codes (not only in the Johnson scheme).
\cref{sec:complete-exclusion} presents some results concerning the distribution
of primes and takes the divisibility theorems concerning primes which were derived in previous sections
and combines them with Lloyd's polynomials to show that, with possible exceptions
when $e \in \{1,2,4,9,10,12,16\}$, there are no nontrivial
$e$-perfect codes in the Johnson scheme. The last seven radii are considered in a separate work~\cite{ZhZh26}.

Finally, before starting the presentation on the road to solving Delsarte's long-standing conjecture
we want to emphasis that perfect codes in the Johnson scheme should not be confused with perfect constant-weight codes~\cite[Chapter 9]{Etz22}.
These codes are also rare, but exist over some non-binary alphabets~\cite{EtvL01,vLTo99,Sva99}. In the binary case these two types of codes coincide.

\section{Survey of Known Techniques and Results}
\label{sec:survey}

A simple observation is that if there exists an $e$-perfect code $\cC$ in J$(n,w)$, then its
complement code $\bar{\cC} \triangleq \{ \bar{\bldc} ~:~ \bldc \in \cC \}$, where $\bar{\bldx}$ is the binary complement of $\bldx$,
is an $e$-perfect code in J$(n,n-w)$. This implies that we only have to consider schemes of the form J$(n,w)$, where $n \geq 2w$,
so this will be our basic assumption throughout our presentation. For the remaining part of the paper we write $n=2w+\delta$,
i.e., $\nminusw \triangleq n-w = w + \delta$.

Since Delsarte made his conjecture regarding the Johnson scheme, there were many attempts to solve his conjecture.
There are three main directions to solve the conjecture and all of them were considered over the years:
\begin{enumerate}
\item Proving the nonexistence of perfect codes in some specific graphs, i.e., showing that for a given $n$ and $w$, $1 \leq w \leq n/2$,
there are no perfect codes in J$(n,w)$.

\item Proving that for some $e \geq 1$, there are no $e$-perfect codes in the Johnson scheme.

\item Proving a tradeoff between $n$, $w$, and $e$, in a possible $e$-perfect code in J$(n,w)$.
\end{enumerate}

Biggs~\cite{Big73} followed Delsarte and developed some criterion for the existence of perfect codes
in distance-transitive graphs (the Johnson graph is distance-regular).
This criterion implies Lloyd's theorem (presented in \cref{sec:complete-exclusion}) which was used to prove the nonexistence
of perfect codes in the Hamming scheme. His work motivated further research and
the first major attempt was done by Bannai~\cite{Ban77} who tried to use Lloyd's theorem as suggested
by Delsarte. He proved that there are no $e$-perfect codes in J$(2w+1,w)$ for $e \geq 2$.
Hammond~\cite{Ham82} continued with similar ideas and proved that there are no perfect codes in J$(2w+2,w)$
and J$(2w+1,w)$. However, the first significant result after Delsarte's conjecture was proved after 10 years
by Roos~\cite{Roo83}. Using a code-anticode theorem (a~generalization of the sphere-packing bound),
by observing that $e$-balls must be maximum size anticodes with diameter~$2e$, he proved, what is known as the Roos bound, that if
there exists an $e$-perfect code in J$(n,w)$, then
\begin{equation}
\label{eq:Roos}
n \leq (w-1) \frac{2e+1}{e}.
\end{equation}

A comprehensive work on completely regular subsets in the Johnson graph was done in the Ph.D. thesis of Martin~\cite{Mar92}.
He used this theory to study perfect codes in the Johnson graph. He ruled out many possible existence parameters of $1$-perfect
codes and $2$-perfect codes. This was done for example by examining solutions to some Diophantine equations.
He also found the exact strength of $1$-perfect codes.
A completely different approach was presented in~\cite{Etz96}. A partition of the coordinates, of a possible perfect code,
into two subsets, was used. On each of these two subsets the weight distribution of the codewords was examined, as well as Steiner
systems embedded in these parts. This made it possible to exclude many schemes (graphs) in which perfect codes cannot exist.
For example, there are no perfect codes in J$(2w+p,w)$, where $p$ is a prime,
no perfect codes in J$(2w+2p,w)$, where $p$ is a prime different from~$3$, or
no perfect codes in J$(2w+3p,w)$, where $p$ is a prime larger than $6$.
The Steiner systems embedded in these $e$-perfect codes imply that $e$, $w$, and $n$, must obey certain modulo conditions.
Further results in this direction were proved in~\cite{Etz01} and the partition of the coordinates were
used for a different proof of \cref{eq:Roos}.
Another step forward using similar methods was done by Shimabukuro~\cite{Shi05} who proved that there are no perfect codes in
J$(2w+5p,w)$, where $p$ is a prime different from $3$, and in J$(2w+p^2 ,w)$, where $p$ is a prime.

A major step forward was done in~\cite{EtSc04}. First, more Steiner systems and divisibility conditions
were proved and as a consequence it was proved that \cref{eq:Roos} cannot be met with equality, i.e.,
\begin{equation}
\label{eq:RoosNotMet}
n < (w-1) \frac{2e+1}{e},
\end{equation}
which implies that
\begin{equation}
\label{eq:section5-w-over-z}
\frac{w}{\nminusw} >\frac {e}{e+1}.
\end{equation}
Next, while previous results excluded certain graphs, presented tradeoff
between $n$, $w$, and $e$, or proved the existence of some designs embedded in the code, the main technique used in~\cite{EtSc04} is completely different.
It also uses a partition of the coordinates into two subsets and it also looks for designs of the codewords on these two subsets.
However, it looks only on the strength of the design and eventually excludes possible radii in which perfect codes cannot exist.
The main tool was by presenting the problem as one for solving polynomial equations and their analysis with number theory.
In the process also many graphs were found to be perfect codes free.
Manipulation of binomial coefficients modulo primes or their factorization were used in the proofs.
Two old theorems in number theory were used, Kummer's theorem~\cite[p. 245]{GKP94,Gra97} and Lucas' theorem~\cite{Fin47,Luc91}.
As these theorem are going to be used extensively in our exposition they will be stated here.

\begin{theorem} [Lucas' theorem]
\label{thm:Lucas}
Let $p$ be a prime and let
$$
N = \prod_{j \geq 0} N_j p^j,  ~~~ K = \prod_{j \geq 0} K_j p^j, ~~~ 0 \leq N_j , K_j <p ,
$$
then
$$
\binom{N}{K} \equiv \prod_{j \geq 0} \binom{N_j}{K_j} ~ (\mmod ~ p).
$$
Moreover, $p$ does not divide $\binom{N}{K}$ if and only if $K_j \leq N_j$ for every $j$.
\end{theorem}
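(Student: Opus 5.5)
The expansions in the statement are meant as base-$p$ representations, $N=\sum_{j\ge 0}N_jp^j$ and $K=\sum_{j\ge 0}K_jp^j$ with $0\le N_j,K_j<p$, and I will read them that way. The plan is the standard generating-function argument in the polynomial ring $\mathbb{F}_p[x]$. We compare the coefficient of $x^K$ in two expressions for $(1+x)^N$ modulo $p$.

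First, establish $(1+x)^p\equiv 1+x^p \pmod p$ as polynomials. For $0<i<p$ the binomial coefficient $\binom{p}{i}=\frac{p!}{i!(p-i)!}$ is divisible by $p$, since $p$ divides the numerator and not the denominator. Induction on $j$ then gives $(1+x)^{p^j}\equiv 1+x^{p^j} \pmod p$ for all $j\ge 0$. Consequently
$$
(1+x)^N=\prod_{j\ge 0}\left((1+x)^{p^j}\right)^{N_j}\equiv \prod_{j\ge 0}\left(1+x^{p^j}\right)^{N_j}=\prod_{j\ge 0}\sum_{k_j=0}^{N_j}\binom{N_j}{k_j}x^{k_jp^j} \pmod p .
$$

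Next, expand the right-hand side. A monomial $x^K$ arises from choices $(k_j)_{j\ge 0}$ with $0\le k_j\le N_j<p$ and $\sum_j k_jp^j=K$. Because every $k_j<p$, uniqueness of the base-$p$ representation forces $k_j=K_j$ for all $j$. Hence there is at most one contributing term, and it exists exactly when $K_j\le N_j$ for every $j$. Comparing with the left-hand side, where the coefficient of $x^K$ is $\binom{N}{K}$, gives
$$
\binom{N}{K}\equiv\prod_{j\ge 0}\binom{N_j}{K_j} \pmod p,
$$
with the convention $\binom{a}{b}=0$ when $b>a$.

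For the ``moreover'' part, suppose first that $K_j>N_j$ for some $j$. Then the corresponding factor is zero, so $p$ divides $\binom{N}{K}$. Conversely, suppose $K_j\le N_j<p$ for all $j$. Each $\binom{N_j}{K_j}=\frac{N_j!}{K_j!(N_j-K_j)!}$ is a positive integer whose numerator $N_j!$ is a product of integers smaller than $p$, so $p$ does not divide it. Since $p$ is prime, it does not divide the finite product either, so $p\nmid\binom{N}{K}$.

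The only real obstacle is bookkeeping in the coefficient extraction: one must justify that no carries or collisions occur among exponents. The constraint $k_j\le N_j<p$ together with uniqueness of base-$p$ digits handles this completely.
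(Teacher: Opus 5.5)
Your proof is correct and complete, and you were right to read the two $\prod$'s in the statement as sums $N=\sum_{j\ge0}N_jp^j$ and $K=\sum_{j\ge0}K_jp^j$; as printed, the statement has a typo. The paper gives no proof of Lucas' theorem. It quotes the result from \cite{Fin47,Luc91}, so there is no argument of the paper's to compare yours against.

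What you wrote is the standard generating-function argument. It rests on the same congruence $(1+x)^{p^j}\equiv 1+x^{p^j} \pmod p$ that the paper itself uses later, in the proof of \cref{lem:block congruence}.
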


\begin{theorem} [Kummer's theorem]
\label{thm:Kummer}
Let $p$ be a prime. The number of times $p$ appears in the factorization of $\binom{k}{r}$ equals the number of carries when adding
$r$ to $k-r$ in base $p$.
\end{theorem}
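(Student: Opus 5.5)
The plan is to reduce the statement to Legendre's formula for the exponent of $p$ in a factorial, and then read off each term of the resulting sum as a carry indicator. Write $\nu_p(m)$ for the exponent of $p$ in $m$. First I will establish Legendre's formula
$$
\nu_p(m!) = \sum_{i \geq 1} \left\lfloor \frac{m}{p^i} \right\rfloor .
$$
It follows by counting, for each $i \geq 1$, the integers in $\{1,\ldots,m\}$ that are divisible by $p^i$. An integer $j$ with $\nu_p(j)=t$ is counted exactly $t$ times, once for each $i \leq t$. The sum is finite because the terms vanish once $p^i > m$.

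Next, applying this to $\binom{k}{r} = \frac{k!}{r!\,(k-r)!}$ gives
$$
\nu_p\!\left(\binom{k}{r}\right) = \sum_{i \geq 1} \left( \left\lfloor \frac{k}{p^i} \right\rfloor - \left\lfloor \frac{r}{p^i} \right\rfloor - \left\lfloor \frac{k-r}{p^i} \right\rfloor \right).
$$
For real $x,y$ one has $\lfloor x+y \rfloor - \lfloor x \rfloor - \lfloor y \rfloor \in \{0,1\}$. With $x=r/p^i$ and $y=(k-r)/p^i$, each summand is therefore $0$ or $1$. It equals $1$ exactly when
$$
(r \bmod p^i) + ((k-r) \bmod p^i) \geq p^i .
$$

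The key step is to identify this last condition with the existence of a carry out of digit position $i-1$ when adding $r$ and $k-r$ in base $p$. I will argue by induction on $i$ that the carry $c_i$ into position $i$ satisfies
$$
c_i = \left\lfloor \frac{(r \bmod p^i) + ((k-r) \bmod p^i)}{p^i} \right\rfloor .
$$
Adding the digits in positions $0,\ldots,i-1$ of the two summands computes exactly $(r \bmod p^i)+((k-r) \bmod p^i)$. This sum is less than $2p^i$, so the outgoing carry $c_i$ is $0$ or $1$, and it is $1$ precisely when the sum reaches $p^i$. The inductive step uses the digit recursion: position $i-1$ contributes $r_{i-1}+(k-r)_{i-1}+c_{i-1}$, which produces a carry $c_i$, and we combine this with $(r \bmod p^{i}) = (r \bmod p^{i-1}) + r_{i-1}p^{i-1}$.

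Summing the indicators over $i \geq 1$ then counts the carries, which proves the statement. I expect no real obstacle; the only care needed is the bookkeeping in this carry/remainder identification, in particular that carries are at most $1$ and that a carry propagated from lower positions is correctly included.
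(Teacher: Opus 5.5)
The paper gives no proof of this statement. Kummer's theorem is quoted as a classical tool, with references to Graham--Knuth--Patashnik and Granville, so there is no in-paper argument to compare against.

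Your proposal is the standard proof via Legendre's formula, and it is correct:
\begin{itemize}
\item The counting argument gives $\nu_p(m!)=\sum_{i\ge1}\lfloor m/p^i\rfloor$.
\item Each summand $\lfloor k/p^i\rfloor-\lfloor r/p^i\rfloor-\lfloor (k-r)/p^i\rfloor$ equals $\lfloor ((r \bmod p^i)+((k-r)\bmod p^i))/p^i\rfloor\in\{0,1\}$.
\item Your inductive identification of this quantity with the carry out of position $i-1$ is exactly the invariant of schoolbook addition. The sum of the two length-$i$ truncations equals $c_i p^i$ plus the lowest $i$ digits of the sum.
\end{itemize}

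Two small points are worth making explicit. First, the hypothesis $0\le r\le k$ is implicit; this is how the paper always applies the theorem. Second, both the valuation sum and the carry count terminate once $p^i>k$, since then $(r\bmod p^i)+((k-r)\bmod p^i)=k<p^i$.

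An equivalent route goes through base-$p$ digit sums $S(m)$. Each carry lowers the digit sum by $p-1$, which gives $\nu_p\binom{k}{r}=\bigl(S(r)+S(k-r)-S(k)\bigr)/(p-1)$. Your version reaches the carry-counting form the paper actually uses more directly.
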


In~\cite{Etz06} further considerations were done on the
designs embedded in possible perfect codes. It was claimed that the most difficult cases
to exclude possible $e$-perfect codes in J$(n,w)$ are when $e=1$ and when $n=2w$.
Gordon~\cite{Gor06} concentrates on $1$-perfect codes in J$(n,w)$. He also used number theory and proved that
such codes cannot exist as long as $n \leq 2^{250}$. Silberstein~\cite{Sil07} (see also~\cite{SiEt10}) improved the
bound in \cref{eq:Roos} for $1$-perfect codes and proved that if a $1$-perfect code exist in J$(2w+\delta,w)$, then $\delta < w/11$.
She also eliminated $2$-perfect codes in J$(2w,w)$ for $5 < w \leq 1.97 \cdot 10^{7655}$ using another concept
from coding theory, namely Pell's equations~\cite{JaWi09}.
Finally, Bannai and Noda~\cite{BaNo16} used their enumeration work on counting the number of block in a design
to obtain a nice improvement to the Roos bound in \cref{eq:Roos} for $e \geq 2$.

%\begin{enumerate}
%\item If there exists an $e$-perfect code in J$(n,w)$, $n \geq 2w$, then
%$$
%n \leq \frac{2we}{e-1} - \frac{7e+1}{2(e-1)} - \frac{\sqrt{A_1}}{2e(e-1)},
%$$
%where
%$$
%A_1 = e \bigl( 8(e+1) (w-\frac{e+3}{2})^2 -(e+2)(e-1)^2 \bigr)
%$$
%
%\item If there exists an $e$-perfect code in J$(n,w)$, $n \leq 2w$, then
%$$
%n \geq \frac{2(e+1)w}{e+2} + \frac{7e+6}{2(e+2)} - \frac{\sqrt{A_2}}{2(e+1)(e+2)},
%$$
%where
%$$
%A_2 = (e+1) \bigl( 8e (w-\frac{e-2}{2})^2 -(e-1)(e+2)^2 \bigr)
%$$
%\end{enumerate}

\section{Regularity of Codes and Embedded Designs}
\label{sec:strength}

From this point we will assume that the radius $e$ of a perfect code is greater than $1$,
unless stated otherwise.
Radius $1$ as well as six other small radii will be considered in a follow up work.
In this section we introduce the method suggested in~\cite{EtSc04} to eliminate possible
$e$-perfect codes in J$(n,w)$.
The results in~\cite{EtSc04} are derived using the regularity of an $e$-perfect code which is
associated also with the $t$-designs that the code admits. The $t$-design with the largest $t$
is the strength of the code. As large as the strength is, the size of an $e$-ball in J$(n,w)$ must divide
some binomial coefficients. If it is proved that it cannot divide one of the coefficients, it implies
that there is no $e$-perfect in J$(n,w)$.

Let  $A\subseteq [n]$, and $I\subseteq A$. Define
$$
\cC_{A}(i)=\bigl|\{\bldc\in\cC ~:~ |\bldc \cap A |=i\}\bigr|
$$
and
$$
\cC_{A}(I)=\bigl|\{\bldc\in\cC ~:~ \bldc\cap A=I\}\bigr|.
$$

\begin{defn}
\label{def:ed} A code $\cC$ in $\textup{J}(n,w)$ is said to be {\em $t$-regular}, if the following two conditions are satisfied.

\noindent {\bf (c.1)} There exist numbers $\alpha (0) ,..., \alpha (t)$ such that if $A \subset [n]$, $\abs{A} =t$, then
$\cC_{A} (i) = \alpha (i)$ for all $0 \leq i \leq t$. In other words, the
number $\cC_{A} (i)$ is independent of the set $A$ for every $0 \leq i \leq t$.

\noindent {\bf (c.2)} For any given $t$-subset $A$ of $[n]$, there exist numbers $\beta_{A} (0) ,..., \beta_{A} (t)$ such
that if $I \subset A$ then $\cC_{A} (I) = \beta_{A} (
\abs{I})$. In other words, the number $\cC_{A} (I)$ depends only on $\abs{I}$.
\end{defn}

\begin{defn}
A \emph{$t$-design} $S_{\lambda} (t,w,n)$ is a collection $\cC$ of
$w$-subsets, called \emph{blocks}, of $[n]$, such that each
$t$-subset of $[n]$ is a subset of exactly $\lambda$ blocks of
$\cC$. When $\lambda =1$ the design is called a \emph{Steiner system} and is denoted by $\textup{S}(t,w,n)$. The largest $t$ of a code
$\cC$ for which the code is a $t$-design is called the \emph{strength} of the code and will be denoted by $\varphi$.
\end{defn}

Note, that if $\cC$ it $t$-regular, then it is also $t'$-regular for all $t' < t$. The following proposition was proved in~\cite{Etz06}.
\begin{proposition}
A code $\cC$ in \textup{J}$(n,w)$ is $t$-regular if and only if it forms a $t$-design.
\end{proposition}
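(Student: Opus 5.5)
We prove the two directions separately. Both rest on one counting identity: for a fixed $t$-subset $A$ and a fixed $I\subseteq A$ with $\abs{I}=i$, the quantity $\lambda_{A,I}$, the number of codewords containing $I$, is a sum over $J$ with $I\subseteq J\subseteq A$ of $\cC_A(J)$. Conversely, $\cC_A(I)$ is recovered from the numbers $\lambda_{A,J}$ by inclusion–exclusion:
$$
\cC_A(I)=\sum_{I\subseteq J\subseteq A}(-1)^{\abs{J}-\abs{I}}\,\abs{\{\bldc\in\cC ~:~ J\subseteq \bldc\}} .
$$

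\textbf{$t$-regular implies $t$-design.} Assume $\cC$ is $t$-regular and let $A$ be any $t$-subset. Then the number of codewords containing $A$ equals $\cC_A(A)$. By (c.2), $\cC_A(A)=\beta_A(t)$. Since $A$ is the only subset of $A$ of size $t$, we also have $\beta_A(t)=\cC_A(t)$. By (c.1), $\cC_A(t)=\alpha(t)$, which is independent of $A$. Hence every $t$-subset lies in exactly $\alpha(t)$ blocks, and $\cC$ is an $S_{\alpha(t)}(t,w,n)$.

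\textbf{$t$-design implies $t$-regular.} Assume $\cC$ is an $S_\lambda(t,w,n)$. We use the standard fact that a $t$-design is also an $s$-design for every $s\le t$, with
$$
\lambda_s=\lambda\binom{n-s}{t-s}\Big/\binom{w-s}{t-s}.
$$
This follows by double counting the pairs ($t$-set $T\supseteq S$, block $\supseteq T$) for a fixed $s$-set $S$. Consequently, the number of codewords containing a set $J$ depends only on $\abs{J}$ when $\abs{J}\le t$.

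Now fix a $t$-subset $A$ and $I\subseteq A$ with $\abs{I}=i$, and apply the inclusion–exclusion formula above. Grouping the terms by $\abs{J}=j$ gives
$$
\cC_A(I)=\sum_{j=i}^{t}(-1)^{j-i}\binom{t-i}{j-i}\lambda_j .
$$
The right-hand side depends only on $i$, $t$ and the parameters of the design, and not on $A$ or $I$. This proves (c.2), with $\beta_A(i)$ equal to this expression. It also proves (c.1): $\cC_A(i)=\binom{t}{i}\beta_A(i)$ is independent of $A$, so we may take $\alpha(i)=\binom{t}{i}\beta(i)$.

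The only step that needs care is the inclusion–exclusion formula for $\cC_A(I)$ together with the descent from $t$-designs to $s$-designs. Both are standard, so no real obstacle is expected. The definitions allow the degenerate cases $t\le w$ and $\lambda=0$, and the argument above does not depend on excluding them.
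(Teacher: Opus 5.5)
The paper does not prove this proposition; it quotes it from \cite{Etz06}, so there is no in-paper argument to compare against. Your proof is correct and self-contained for $t\le w$. You first descend to $s$-designs by double counting, and then use inclusion--exclusion over $I\subseteq J\subseteq A$. That second step is the same device the paper uses later in the proof of \cref{thm:mixed-design-divisibility} (take $X=I$ and $Y=A\setminus I$). Using $\lambda_j=\abs{\cC}\binom{w}{j}/\binom{n}{j}$, your $\beta(i)$ collapses to the closed form $\abs{\cC}\binom{n-t}{w-i}/\binom{n}{w}$. In the first direction the detour through (c.2) is unnecessary: $\cC_A(A)=\cC_A(t)=\alpha(t)$ follows from (c.1) alone. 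This also sidesteps whether ``$I\subset A$'' in (c.2) is meant to include $I=A$.

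Your final remark needs correcting. The argument does depend on $t\le w$, because your formula for $\lambda_s$ divides by $\binom{w-s}{t-s}$, which is $0$ when $t>w$. In that range the statement itself is false. Every code is then an $S_0(t,w,n)$, since no $w$-set contains a $t$-set. Yet $\cC=\{\{1\}\}$ in \textup{J}$(4,1)$ is not $2$-regular: $\cC_{\{1,2\}}(1)=1$ while $\cC_{\{3,4\}}(1)=0$. So the proposition should be read with $t\le w$, which is the only case the paper needs, since there $t\le\varphi<w$.
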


There are simple well-known necessary divisibility conditions for the existence of a Steiner system S$(t,w,n)$.
\begin{proposition}[Corollary 3.1 on page 49 in \cite{Etz22}]
\label{prop:necessary_Steiner}
If there exists a Steiner system \textup{S}$(t,w,n)$ then for all $0 \leq i \leq t$, the numbers
$$
\binom{n-i}{t-i} \Big{/} \binom{w-i}{t-i}
$$
must be integers.
\end{proposition}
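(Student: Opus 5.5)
The statement is the standard necessary condition for Steiner systems. I plan a double-counting argument that shows every derived system of a Steiner system is again a Steiner system.

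Fix $i$ with $0 \le i \le t$ and an $i$-subset $I \subseteq [n]$. Let $\cC_I$ be the collection of blocks of $\cC$ that contain $I$. I will count the pairs $(T,B)$ in two ways, where $B \in \cC_I$, $T$ is a $t$-subset, and $I \subseteq T \subseteq B$.

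\emph{Counting by $T$.} The number of $t$-subsets $T$ of $[n]$ with $I \subseteq T$ is $\binom{n-i}{t-i}$. By the Steiner property, each such $T$ lies in exactly one block of $\cC$. That block contains $I$, so it belongs to $\cC_I$. Hence the number of pairs is $\binom{n-i}{t-i}$.

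\emph{Counting by $B$.} Each block $B \in \cC_I$ has $w$ elements and contains $I$. It therefore contains exactly $\binom{w-i}{t-i}$ $t$-subsets $T$ with $I \subseteq T \subseteq B$. Hence the number of pairs is $\abs{\cC_I}\binom{w-i}{t-i}$.

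Equating the two counts gives
$$
\abs{\cC_I} = \binom{n-i}{t-i} \Big/ \binom{w-i}{t-i}.
$$
The left side is the size of a set, so it is a nonnegative integer. This holds for every $0 \le i \le t$, which proves the claim.

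The only point needing care is that $\binom{w-i}{t-i}$ must be nonzero, so that the division makes sense. This requires $t \le w$, which is implicit in the existence of a nontrivial $\textup{S}(t,w,n)$. The case $i=0$ is included: it gives $\abs{\cC} = \binom{n}{t}/\binom{w}{t}$. I do not anticipate any real obstacle; the whole argument is this single double count, applied to each $i$.
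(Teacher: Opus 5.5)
Your double count is correct and complete. The paper gives no proof of its own here; it only cites \cite{Etz22}. Your argument is the standard proof of that result: the blocks through a fixed $i$-subset $I$ form a derived system \textup{S}$(t-i,w-i,n-i)$, and $\abs{\cC_I}=\binom{n-i}{t-i}\big/\binom{w-i}{t-i}$ is its number of blocks.
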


In~\cite{Etz96} it was proved that many Steiner systems are embedded in an $e$-perfect code.
\begin{proposition} [Theorem 3.1 and Corollary 3.2 in~\cite{Etz96}]
\label{prop:embedded_Steiner}
If there exists an $e$-perfect code in \textup{J}$(n,w)$, then there exist Steiner systems \textup{S}$(e+1,2e+1,w)$ and \textup{S}$(e+1,2e+1,n-w)$.
\end{proposition}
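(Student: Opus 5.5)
The plan is to build the Steiner system S$(e+1,2e+1,w)$ on the point set $\bldc$, where $\bldc$ is a fixed codeword. The blocks will be the sets $\bldc\setminus\bldc'$, where $\bldc'$ runs over the codewords at distance exactly $2e+1$ from $\bldc$ that satisfy one extra normalisation: we fix, once and for all, an $(e+1)$-subset $U_0\subseteq[n]\setminus\bldc$ and keep only those $\bldc'$ with $U_0\subseteq\bldc'\setminus\bldc$. The system S$(e+1,2e+1,n-w)$ on $[n]\setminus\bldc$ will come from the symmetric construction, with the roles of the two sides swapped: fix an $(e+1)$-subset $T_0\subseteq\bldc$ and take the blocks $\bldc'\setminus\bldc$. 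This requires $n-w\ge 2e+1$ and $w\ge 2e+1$, which hold since $n\ge 2w$ and the code is nontrivial, i.e.\ $w$ is larger than the trivial case. The divisibility conditions of \cref{prop:necessary_Steiner} are not needed; the proof is a direct combinatorial argument.

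\textbf{Key lemma.} Let $T\subseteq\bldc$ and $U\subseteq[n]\setminus\bldc$ both have size $e+1$, and put $\bldx=(\bldc\setminus T)\cup U$, so that $d(\bldx,\bldc)=e+1$. Let $\bldc'$ be the unique codeword with $d(\bldx,\bldc')\le e$. Since $\bldc'\ne\bldc$, the triangle inequality together with minimum distance at least $2e+1$ gives $d(\bldc,\bldc')=2e+1$, and then $d(\bldx,\bldc')=e$ exactly. Now count intersections: $|\bldx\cap\bldc'|=w-e$, while
\[
|\bldx\cap\bldc'|\le |(\bldc\setminus T)\cap\bldc'|+|U\cap\bldc'|\le (w-2e-1)+(e+1)=w-e .
\]
Equality must hold throughout, which forces $\bldc\cap\bldc'\subseteq\bldc\setminus T$ and $U\subseteq\bldc'$. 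That is, $T\subseteq\bldc\setminus\bldc'$ and $U\subseteq\bldc'\setminus\bldc$.

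\textbf{Converse.} If $\bldc'$ is any codeword with $d(\bldc,\bldc')=2e+1$, $T\subseteq\bldc\setminus\bldc'$ and $U\subseteq\bldc'\setminus\bldc$, then
\[
|\bldx\cap\bldc'|=(w-2e-1)+(e+1)=w-e ,
\]
so $d(\bldx,\bldc')=e$ and $\bldc'$ covers $\bldx$.

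\textbf{Conclusion.} By the perfect-packing property, $\bldx$ is covered by exactly one codeword. Combining the lemma with its converse, for each pair $(T,U)$ there is exactly one codeword $\bldc'$ at distance $2e+1$ from $\bldc$ with $T\subseteq\bldc\setminus\bldc'$ and $U\subseteq\bldc'\setminus\bldc$. Fix $U=U_0$ and let $T$ vary: every $(e+1)$-subset of $\bldc$ lies in exactly one block $\bldc\setminus\bldc'$ of the restricted family, and each such block has size $2e+1$. This is an S$(e+1,2e+1,w)$ on $\bldc$. Fix $T=T_0$ instead and let $U$ vary: this gives an S$(e+1,2e+1,n-w)$ on $[n]\setminus\bldc$.

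\textbf{Main obstacle.} The delicate point is uniqueness. Without the normalisation by $U_0$ (respectively $T_0$), the family of all $\bldc\setminus\bldc'$ with $d(\bldc,\bldc')=2e+1$ is only a $t$-design with
\[
\lambda=\binom{n-w}{e+1}\Big/\binom{2e+1}{e+1},
\]
not a Steiner system. Pinning one side of the pair $(T,U)$ is exactly what reduces $\lambda$ to $1$. The only other facts needed are the tight intersection count in the key lemma and the triangle inequality.
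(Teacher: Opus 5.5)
Your proof is correct. The paper gives no proof of its own and only quotes \cite{Etz96}; your argument is the standard proof of that cited result. You fix a codeword $\bldc$, send each word $(\bldc\setminus T)\cup U$ at distance $e+1$ to its unique covering codeword at distance exactly $2e+1$, use the tight count $|\bldx\cap\bldc'|\le (w-2e-1)+(e+1)$, and hold one side of the pair $(T,U)$ fixed to get $\lambda=1$.

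The only loose point is your justification of $w,n-w\ge 2e+1$. The construction needs only $e+1\le\min\{w,n-w\}$, which holds as soon as the code has more than one codeword. The covering codeword at distance exactly $2e+1$ from $\bldc$ then itself forces $\min\{w,n-w\}\ge 2e+1$.
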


Martin~\cite{Mar92} examined the strength of $1$-perfect codes in J$(n,w)$ and proved the following result.
\begin{proposition}
\label{prop:Martin}
If $\cC$ is a $1$-perfect code in \textup{J}$(n,w)$, then its weight $w$, length $n$, and strength $\varphi$ are
$$
w=kr+1, \qquad n=2kr+r-k, \qquad \varphi=k(r-1).
$$
\end{proposition}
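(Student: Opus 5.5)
The plan is to use the eigenspace decomposition of the Johnson graph together with Delsarte's characterization of $t$-designs (Theorem~4.7 in~\cite{Del73}). The key observation is that a $1$-perfect code has a characteristic vector which, after removing its constant part, lies in a single eigenspace. The index of that eigenspace determines the strength, and the integrality of that index forces the stated parametrization.

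\textbf{Step 1: the characteristic vector lies in one eigenspace.} Let $A$ be the adjacency matrix of J$(n,w)$, let $N=\binom{n}{w}$, and let $\chi$ be the characteristic vector of $\cC$. Since the closed $1$-balls around the codewords partition the vertex set, we have
$$
(A+I)\chi=\mathbf{1}.
$$
Recall the standard eigenspace decomposition $\mathbb{R}^{N}=V_0\oplus V_1\oplus\cdots\oplus V_w$, where $V_0=\langle \mathbf{1}\rangle$ and $A$ acts on $V_j$ with eigenvalue
$$
\theta_j=(w-j)(n-w-j)-j .
$$
Write $\chi=\frac{\abs{\cC}}{N}\mathbf{1}+\sum_{j\ge1}\chi_j$ with $\chi_j\in V_j$. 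Applying $A+I$ and comparing components gives $(\theta_j+1)\chi_j=0$ for every $j\geq 1$. Hence $\chi_j=0$ unless $\theta_j=-1$.

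Since $\cC$ is nontrivial, $\chi$ is not constant, so some $\chi_j$ is nonzero. Therefore there is an index $j\geq1$ with $\theta_j=-1$. This index is unique, because $\theta_j$ is strictly decreasing in $j$ on $0\le j\le w$ (each factor $w-j$ and $n-w-j$ decreases, and $-j$ decreases). So $\chi\in V_0\oplus V_j$ with $\chi_j\neq 0$.

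\textbf{Step 2: the strength is $j-1$.} By Delsarte's Theorem~4.7, $\cC$ is a $t$-design if and only if $\chi$ is orthogonal to $V_1,\dots,V_t$. From Step~1, this holds exactly for $t\le j-1$ and fails for $t=j$. Hence $\varphi=j-1$.

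\textbf{Step 3: solving $\theta_j=-1$.} The equation $\theta_j=-1$ reads
$$
(w-j)(n-w-j)=j-1 .
$$
Set $k=w-j$ and $r-1=n-w-j$, so that $k(r-1)=j-1=\varphi$. Then $w=k+j=k+k(r-1)+1=kr+1$. Similarly $n-w=(r-1)+j=(r-1)+k(r-1)+1=kr+r-k$. Therefore $n=2kr+r-k$ and $\varphi=k(r-1)$, as claimed. Here $k$ and $r-1$ are nonnegative integers, and nontriviality forces $k\ge1$ and $r\ge2$.

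\textbf{Main obstacle.} The only delicate point is to justify the two ingredients precisely: that $(A+I)\chi=\mathbf{1}$ follows from perfectness, and that Delsarte's design criterion applies in exactly the form ``$t$-design $\iff$ $\chi\perp V_1,\dots,V_t$'' for the Johnson scheme's eigenspaces. Once these are in place, the rest is the monotonicity check on $\theta_j$ and the elementary algebra above.
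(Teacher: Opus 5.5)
Your Steps~1 and~2 are correct. They are exactly the $e=1$ case of the paper's \cref{lem:section5-Lloyd-factorization}: there $\cS=\{j\}$ with $j=\varphi+1=w-d+1$, and $L_1(x)=1+x$ vanishes at $\theta_j=-1$. The paper itself only cites Martin here and gives no proof. The problem is the last line of Step~3. From your own correct identities $w=kr+1$ and $n-w=kr+r-k$ you get
\[
n=w+(n-w)=2kr+r-k+1,
\]
not $2kr+r-k$. This slip cannot be repaired to reach the printed formula. Once $w=kr+1$ and $\varphi=k(r-1)$ hold with $k\ge1$, the code determines $k=w-\varphi-1$ and $r=(w-1)/k$. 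The equation $(w-\varphi-1)(n-w-\varphi-1)=\varphi$, which is $\theta_{\varphi+1}=-1$, then forces $n=2kr+r-k+1$. So what your argument actually proves contradicts the statement as printed, which is off by one in $n$.

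A concrete check is Delsarte's two-word code $\{X,[6]\setminus X\}$ in J$(6,3)$. It is $1$-perfect, since $\Phi_1(6,3)=10$ and $\binom{6}{3}=20$. It has $\theta_2=1\cdot 1-2=-1$ and strength exactly $1$, so $k=1$ and $r=2$. Then $n=6=2kr+r-k+1$, whereas $2kr+r-k=5$. Your derivation uses nontriviality only to make $\chi$ nonconstant and to get $k\ge1$, $r\ge2$, so this example is a legitimate test.

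The same correction follows from \cref{lem:section5-Lloyd-factorization} with $e=1$. That lemma gives $1+w(n-w)=(w-d+1)(n-w+d)$, and with $d=k+1$ this yields $w=k(k+1+\delta)+1$, i.e.\ $\delta=r-k-1$. The proof of \cref{cor:interval_overlap} instead uses $\delta=r-k-2$. That proof still goes through with the corrected value, since $\delta+d=r<kr+1=w$ for $k\ge1$.

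You should therefore conclude $n=2kr+r-k+1$, equivalently $n-w=(k+1)(r-1)+1$. Flag the discrepancy with the stated proposition rather than asserting the printed formula.
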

The strength of \cref{prop:Martin} was written differently in~\cite{EtSc04} and in a third way in~\cite{Etz06}.
When $e >1$ it was proved in~\cite{EtSc04} that the strength is at least $\lfloor w/2 \rfloor$
and this is only if $w$ is large enough. This result will be considerably improved now. It was also proven
in~\cite{EtSc04} that the strength is at least $e$ and for the polynomial
\begin{equation}
\label{eq:the_key_polynomial}
\sigma_e(w,\delta,t)=\sum_{i=0}^{e}\sum_{j=0}^{i}
(-1)^j \binom{t}{j}\binom{w-j}{i-j}\binom{w+\delta-t+j}{i}
\end{equation}
defined in~\cite{EtSc04}, the following proposition was proved.

\begin{proposition} [Theorem 18 in~\cite{EtSc04}]
\label{prop:main_t_regular}
Let $\cC$ be an $e$-perfect code in \textup{J}$(2w+\delta,w)$ and let ${1 \leq t \leq w}$.
If $\sigma_e(w,\delta,m) \neq 0$ for all integers $1 \leq m \leq t$, then $\cC$ is $t$-regular.
\end{proposition}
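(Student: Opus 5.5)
The plan is to argue by induction on $t$, using only the fact that the $e$-balls around codewords partition J$(n,w)$, $n=2w+\delta$. For $t=0$ there is nothing to prove. Assume $\cC$ is $(t-1)$-regular, and that $\sigma_e(w,\delta,m)\neq 0$ for $1\le m\le t$. By the proposition of~\cite{Etz06} quoted above, it is enough to show that $\cC$ is a $t$-design, or equivalently that condition (c.2) and then (c.1) hold for $t$-subsets.

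\emph{Step 1 (double counting).} Fix a $t$-subset $A\subseteq[n]$ and a subset $J\subseteq A$. Every $w$-subset $X$ with $X\cap A=J$ lies in exactly one $e$-ball, so
$$
\binom{n-t}{w-|J|}=\sum_{I\subseteq A}\cC_A(I)\,N(J,I),
$$
where $N(J,I)$ is the number of $w$-subsets $X$ with $X\cap A=J$ and $d(X,\bldc)\le e$, for a fixed codeword $\bldc$ with $\bldc\cap A=I$. This number depends only on $|I|$, $|J|$ and $|I\cap J|$, and it can be written explicitly as a sum of products of binomial coefficients in $w$, $\delta$ and $t$. This gives a linear system $M f=g$. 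Here $f=(\cC_A(I))_{I\subseteq A}$ is the unknown vector, and $g$ is a vector that depends only on $|J|$.

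\emph{Step 2 (isolating the top component).} The induction hypothesis fixes every ``lower order'' part of $f$. Concretely, summing $\cC_A(I)$ over the $I$ that agree on any proper subset $A'\subsetneq A$ gives $\cC_{A'}(\cdot)$, and by $(t-1)$-regularity these values depend only on sizes. The remaining freedom in $f$ therefore lies in the space of functions on $2^A$ all of whose marginals to proper subsets of $A$ vanish. An example of such a deviation is $h(I)=(-1)^{|I|}$ times a constant; more generally we take the $t$-th finite difference in every coordinate of $A$. Write $f=f_0+h$, where $f_0$ is any size-symmetric solution with the prescribed marginals, and subtract the two systems to get $Mh=0$. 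Since $N(J,I)$ depends only on the three sizes above, $M$ commutes with the action of $\mathrm{Sym}(A)$. The plan is to show that on this ``top'' subspace $M$ acts as multiplication by a scalar, and that this scalar equals $\pm\sigma_e(w,\delta,t)$.

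\emph{Step 3 (the scalar is $\sigma_e$).} To compute the scalar, apply $M$ to $h=\chi_A$, where $\chi_A(I)=(-1)^{t-|I|}$. Then evaluate at $J=\emptyset$ or $J=A$, or better at a single orbit, after splitting $A$ into the parts inside and outside $\bldc$. Choosing $X$ amounts to choosing $j$ elements of $I\setminus J$ and $J\setminus I$ patterns, which gives the factors $\binom{t}{j}$. It also involves choosing the remaining $i-j$ deletions among the $w-j$ free coordinates of $\bldc$, giving $\binom{w-j}{i-j}$, and choosing $i$ insertions among the $w+\delta-t+j$ free positions outside, giving $\binom{w+\delta-t+j}{i}$. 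The alternating signs come from $\chi_A$. Summing over $i\le e$ should reproduce exactly~\eqref{eq:the_key_polynomial} with $m=t$. Since $\sigma_e(w,\delta,t)\neq0$, this forces $h=0$.

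Consequently $\cC_A(I)$ depends only on $|I|$, which is (c.2). Moreover, the solution $f_0$ is then determined by $g$ and the lower marginals, neither of which depends on $A$, so (c.1) holds as well, and $\cC$ is $t$-regular. The hypothesis for $m<t$ is used to make the induction run through all smaller $m$.

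\emph{Main obstacle.} I expect the main difficulty to be Step 2/3. One must verify that the top subspace is genuinely an eigenspace of $M$: because $M$ is not translation-invariant on $2^A$, a priori it could mix this subspace with lower components. One must then carry out the binomial bookkeeping to match $\sigma_e$ exactly. I would first try to handle this by working with $(t-1)$-regularity coordinate by coordinate. That is, take the difference $\cC_A(I\cup\{x\})-\cC_A(I)$ and show that $M$ maps differences in the last coordinate to differences; induction then reduces the problem to a single scalar identity.
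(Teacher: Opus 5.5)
The paper does not prove this statement; it imports it as Theorem~18 of~\cite{EtSc04}, so your proposal has to stand on its own. Your framework is sound. Your Step~3 bookkeeping, carried out at $J=\emptyset$, gives exactly \eqref{eq:the_key_polynomial}. Let $N_j=\sum_{i=0}^{e}\binom{w-j}{i-j}\binom{w+\delta-t+j}{i}$ be the number of $X$ with $X\cap A=\emptyset$ in the $e$-ball of a codeword meeting $A$ in $j$ points. Then $\sum_{j}(-1)^j\binom{t}{j}N_j=\sigma_e(w,\delta,t)$. The row $J=A$ gives the analogous sum with $w$ and $w+\delta$ interchanged, so use $J=\emptyset$.

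The gap is the step you yourself flag. You never show that $M$ acts as a scalar on the ``top'' subspace, and the difference-based plan is only a sketch. The missing observation makes that step unnecessary: the top subspace is one-dimensional. Vanishing of the marginal of $h$ on $A\setminus\{x\}$ means $h(K)+h(K\cup\{x\})=0$ for all $K\subseteq A\setminus\{x\}$. Since this holds for every $x\in A$, $h(I)=(-1)^{|I|}h(\emptyset)$. Hence the single row $J=\emptyset$ of $Mh=0$ already reads $h(\emptyset)\,\sigma_e(w,\delta,t)=0$, and no eigenspace property is needed. The property does hold anyway: marginalizing $Mf$ to $A\setminus\{x\}$ gives the level-$(t-1)$ matrix applied to the marginal of $f$. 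This is because the number of $X$ in the ball of $\bldc$ with prescribed $X\cap(A\setminus\{x\})$ does not depend on whether $x\in\bldc$. So $M$ maps the top line to itself.

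One-dimensionality also straightens out the logic of Step~2. Since $(-1)^{|I|}$ is size-symmetric, (c.2) at level $t$ follows from $(t-1)$-regularity alone. Indeed, the natural way to get your size-symmetric solution $f_0$ is to average $f$ over $\mathrm{Sym}(A)$. Then $h=f-f_0=0$ trivially, which proves nothing about (c.1). The hypothesis $\sigma_e(w,\delta,t)\neq 0$ is needed only for uniqueness of the solution of $Mf=g$ with prescribed lower marginals. Applying that uniqueness to $\cC_{A'}(\cdot)$, transported to $A$ by a bijection $A'\to A$, is what gives (c.1).

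In design language the whole induction step is one equation. By inclusion--exclusion, $\cC_A(I)=\beta(|I|)+(-1)^{t-|I|}\lambda(A)$, where $\beta$ depends only on the $(t-1)$-design parameters and $\lambda(A)$ is the number of codewords containing $A$. Counting the $w$-subsets disjoint from $A$ gives
\[
\binom{n-t}{w}=K_t+(-1)^t\lambda(A)\,\sigma_e(w,\delta,t),
\]
with $K_t$ independent of $A$. So $\sigma_e(w,\delta,t)\neq0$ forces $\lambda(A)$ to be constant, i.e., $\cC$ is a $t$-design.
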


Some consequences of \cref{prop:main_t_regular} given in~\cite{EtSc04} are the following two propositions.
\begin{proposition} [Corollary 11 in~\cite{EtSc04}]
\label{prop:p2_divides_ball}
If $e \geq 2$ and $p$ is a prime, then there is no $\lfloor w/2\rfloor$-regular $e$-perfect code in \textup{J}$(n,w)$ when
$p^2$ divides $\Phi_e(n,w)$.
\end{proposition}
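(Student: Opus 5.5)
The approach is to use the $t$-regularity of $\cC$ with $t=\lfloor w/2\rfloor$ to show that $\Phi_e(n,w)$ divides a whole family of binomial coefficients. Then, assuming $p^2 \mid \Phi_e(n,w)$, we will find one member of the family that $p^2$ cannot divide.

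\textbf{Step 1: the divisibility family.} Suppose $\cC$ is a $t$-regular $e$-perfect code in J$(n,w)$ with $t=\lfloor w/2\rfloor$. By the proposition equating $t$-regularity with $t$-designs, $\cC$ is a $t$-design, and therefore also an $i$-design for every $0\le i\le t$. Counting pairs (an $i$-subset, a codeword containing it) gives
$$
\lambda_i = \abs{\cC}\binom{w}{i}\Big/\binom{n}{i}.
$$
Since $\abs{\cC}=\binom{n}{w}/\Phi_e(n,w)$, we get $\lambda_i=\binom{n-i}{w-i}/\Phi_e(n,w)$, so $\Phi_e(n,w)$ divides $\binom{n-i}{w-i}$ for all $0\le i\le t$.

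Condition (c.2) gives more. For a $t$-set $A$, $\beta_A(t)$ counts the codewords containing $A$. Write $A=A_1\cup A_2$ with $\abs{A_1}=i$ and $\abs{A_2}=t-i$, and apply inclusion–exclusion over $A_2$ to the number of codewords containing $A_1$ and missing $A_2$. This shows that $\Phi_e(n,w)$ also divides numbers of the form $\binom{n-t}{w-i}$, the counts for a derived/residual design. This yields a two-parameter family $\binom{n-a}{w-b}$ with $0\le b\le a\le t$.

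\textbf{Step 2: the $p$-adic contradiction.} Assume $p^2\mid\Phi_e(n,w)$. Then $p^2$ divides every binomial coefficient in the family, so by Kummer's theorem each one requires at least two carries when adding $w-b$ to $n-w-a+b$ in base $p$. Let $p^s$ be the largest power of $p$ with $p^s\le w$; clearly $p\le w$, since otherwise the family is easy to contradict. Because $t\approx w/2$, the shift ranges over an interval of length at least about $w/2$. We choose $a,b$ so that the low-order $p$-adic digits of $w-b$ and of $n-w-a+b$ are both small, killing the carries in the low positions. At most one carry can then survive, at the top digit. This requires a case analysis on the leading digits of $w$ and $n-w$ in base $p$. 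The case $w=\nminusw$ should be handled separately via the bound \cref{eq:RoosNotMet}.

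\textbf{Main obstacle.} The hard part is the carry count in Step 2. We must show that among $0\le i\le\lfloor w/2\rfloor$ some member has $p$-adic valuation at most $1$. This is tight precisely because the window is only $w/2$ long. I would first try $b=a=i$ with $i\equiv w \pmod{p^{s}}$ reduced, that is, $i=w \bmod p^s$. This makes $w-i$ a multiple of $p^s$, so there are no carries below digit $s$, and at most one carry at the top since $w-i<p^{s+1}$. If this $i$ exceeds $t$, I would switch to $i=w\bmod p^{s-1}$. The resulting extra carries at digit $s-1$ are then controlled using the residual family $\binom{n-t}{w-i}$, which shifts only the bottom argument.
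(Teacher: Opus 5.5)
Your Step 1 is fine: it is \cref{prop:divisiility_strength}, and your two-parameter family is \cref{thm:mixed-design-divisibility}. Your first choice $i=w \bmod p^s$ in $\binom{n-i}{w-i}$ is in fact all you need. However, the step that carries the proof is not justified, and the obstacle you single out is not an obstacle.

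Write $w=cp^s+\rho$ with $p^s\le w<p^{s+1}$, $s\ge 0$ (this also covers $p>w$), $1\le c\le p-1$, $0\le \rho<p^s$. Then $w-\rho\ge p^s>\rho$, so $\rho<w/2$ and hence $\rho\le\lfloor w/2\rfloor$ always. The fallback to $w\bmod p^{s-1}$ and the residual family therefore never arises, and as written it is too vague to check anyway.

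The real gap is your claim that there is ``at most one carry at the top since $w-i<p^{s+1}$''. When you add $cp^s$ to $\nminusw$, a carry out of position $s$ propagates through every digit of $\nminusw$ equal to $p-1$ in positions $s+1,s+2,\ldots$. Nothing in your argument controls those digits. For instance, $p=2$, $w=5$, $\rho=1$, $\nminusw=12$ gives $\binom{16}{4}=1820=4\cdot 455$, with two carries. Note also that the hypothesis $e\ge 2$ is exactly what is needed here, and your sketch never uses it.

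The missing input is an upper bound on $\nminusw$, namely the Roos bound \cref{eq:Roos}. For $e\ge 2$ it gives $\nminusw\le w-2+\frac{w-1}{e}<\frac32 w<\frac32 p^{s+1}$.
\begin{itemize}
\item For odd $p$: $\nminusw<p^{s+2}$ and its digit in position $s+1$ is at most $1<p-1$, so a carry out of position $s$ stops there.
\item For $p=2$: $\nminusw<3\cdot 2^s$, so bits $s$ and $s+1$ of $\nminusw$ are not both $1$, and again at most one carry occurs.
\end{itemize}
Hence $\nu_p\bigl(\binom{n-\rho}{w-\rho}\bigr)\le 1$, contradicting $p^2\mid\Phi_e(n,w)\mid\binom{n-\rho}{w-\rho}$. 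You invoked \cref{eq:RoosNotMet} only for a ``case $w=\nminusw$'', which needs no separate treatment; its real role is to cap $\nminusw$.

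With this repair your argument is valid and differs from what the paper offers. The paper quotes the statement from \cite{EtSc04} and recovers it through \cref{cor:interval_overlap} and \cref{lem:central-row-gcd}. There the mixed designs give the whole row segment $\binom{\nminusw+d}{k}$, $d\le k\le \nminusw$, with Roos used to make two intervals overlap. One then takes $k=mQ$ or $k=mQ/p$, where $Q$ is the largest power of $p$ not exceeding $M=\nminusw+d$. Your version needs only the pure family $\binom{n-i}{w-i}$ and a single index. The paper's version yields more: the exact square-free gcd and the localization $\nminusw<p^{s_p}\le \nminusw+d$ of every prime divisor, which drives \cref{sec:complete-exclusion}.
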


\begin{proposition} [Theorem 27 in~\cite{EtSc04}]
\label{prop:no-perfect-3}
If $p$ is a prime, $e\equiv-1\pmod{p^2}$, and an $e$-perfect code exists in \textup{J}$(n,w)$, then
$p^2$ divides $\Phi_e(n,w)$.
\end{proposition}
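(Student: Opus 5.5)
The plan has two stages: first pin down $w$ and $n-w$ modulo $p^2$ using the embedded Steiner systems, then evaluate $\Phi_e(n,w)$ modulo $p^2$ by splitting the sum into blocks of length $p^2$.

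Write $e+1=p^2m$. By \cref{prop:embedded_Steiner}, an $e$-perfect code in J$(n,w)$ gives Steiner systems S$(e+1,2e+1,w)$ and S$(e+1,2e+1,n-w)$. Apply \cref{prop:necessary_Steiner} with $i=e$. The number of blocks through an $e$-subset is then
$$\binom{w-e}{1}\Big/\binom{e+1}{1}=\frac{w-e}{e+1},$$
which must be an integer. So $w\equiv e\equiv -1 \pmod{e+1}$, and in particular $w\equiv -1 \pmod{p^2}$. The same argument gives $\nminusw=n-w\equiv -1\pmod{p^2}$. Write $w=p^2a+(p^2-1)$ and $\nminusw=p^2a'+(p^2-1)$, so that in base $p$ the two lowest digits of $w$ and of $\nminusw$ are both $p-1$.

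Next, reduce $\Phi_e(n,w)=\sum_{i=0}^{e}\binom{w}{i}\binom{\nminusw}{i}$ modulo $p^2$. Since $e+1=p^2m$, the range $0\le i\le e$ splits exactly into the $m$ blocks $i=p^2q+r$ with $0\le q<m$ and $0\le r<p^2$. I want to show, for each fixed $q$,
$$\sum_{r=0}^{p^2-1}\binom{w}{p^2q+r}\binom{\nminusw}{p^2q+r}\equiv 0 \pmod{p^2}.$$
The heuristic is a $p^2$-adic Lucas factorization
$$\binom{w}{p^2q+r}\approx\binom{a}{q}\binom{p^2-1}{r}.$$
Under it, the block sum becomes $\binom{a}{q}\binom{a'}{q}\sum_{r}\binom{p^2-1}{r}^2=\binom{a}{q}\binom{a'}{q}\binom{2p^2-2}{p^2-1}$. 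By \cref{thm:Kummer}, adding $p^2-1$ to itself in base $p$ produces two carries, so $p^2$ divides $\binom{2p^2-2}{p^2-1}$.

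To make this rigorous I would work with generating functions rather than digitwise Lucas, since \cref{thm:Lucas} only holds modulo $p$. The block sum is a coefficient extraction from
$$(1+x)^{w}(1+y)^{\nminusw}=(1+x)^{p^2a}(1+x)^{p^2-1}(1+y)^{p^2a'}(1+y)^{p^2-1},$$
restricted to the diagonal monomials $x^iy^i$ with $p^2q\le i<p^2(q+1)$. I would also try to exploit the exact identity
$$\binom{w}{i}=\frac{w+1-i}{i}\binom{w}{i-1},\qquad w+1\equiv 0 \pmod{p^2},$$
which makes consecutive terms inside a block nearly alternating up to controllable $p$-adic factors. A cleaner alternative is to use the telescoping identity for $\binom{w}{i}\binom{\nminusw}{i}$ partial sums (hypergeometric summation), aiming for a closed form such as
$$\Phi_e(n,w)=\frac{(e+1)}{\cdots}\binom{w}{e+1}\binom{\nminusw}{e}+\ldots,$$
in which the factor $e+1\equiv 0\pmod{p^2}$ appears explicitly.

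The main obstacle is this modulo-$p^2$ step. The identity $(1+x)^{p^2}\equiv 1+x^{p^2}$ fails modulo $p^2$; one only has $(1+x)^{p^2}\equiv(1+x^p)^p \pmod{p^2}$. So the error terms of the naive factorization must be shown to cancel across the block, and this is where the two conditions $w\equiv\nminusw\equiv-1\pmod{p^2}$ have to be used jointly. If a direct congruence proves too messy, I would fall back on combining the derived congruences with the strength of the code from \cref{prop:main_t_regular}. That would give integrality of $|\cC|\binom{w}{i}/\binom{n}{i}$ for small $i$, from which $p$-adic valuations can be compared via \cref{thm:Kummer}.
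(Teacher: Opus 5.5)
\textbf{There is a gap: the modulo-$p^2$ block computation, which is the core of the proof, is never carried out.}

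Your first stage is correct. Taking $i=e$ in \cref{prop:necessary_Steiner} for S$(e+1,2e+1,w)$ and S$(e+1,2e+1,\nminusw)$ gives $w\equiv\nminusw\equiv p^2-1\pmod{p^2}$. This is the case $P=p^2$, $\gamma=1$ of \cref{lem:residue-localization}. The central coefficient $\binom{2p^2-2}{p^2-1}$ you isolate is also the right quantity. However, you only list candidate strategies for showing each block sum vanishes modulo $p^2$, and two of them cannot deliver:
\begin{itemize}
\item The displayed ``closed form'' is a guess that you do not derive.
\item The design-integrality fallback points the wrong way. Conditions such as $\Phi_e(n,w)\mid\binom{n-i}{w-i}$ bound $\nu_p(\Phi_e(n,w))$ from \emph{above}, whereas you need $\nu_p(\Phi_e(n,w))\ge 2$.
\end{itemize}

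The missing step is short. It is the argument of \cref{lem:block congruence} with $P=p^2$, $D=p$, $b=c=e_0=p^2-1$; it is also the case $k=2$, $\gamma_k=1$ of \cref{thm:p-adic-interval-criterion}. From $(1+x)^{p^2}\equiv(1+x^p)^p\pmod{p^2}$, which you already noted, you get $(1+x)^w\equiv(1+x^p)^{pa}(1+x)^{p^2-1}\pmod{p^2}$. Hence for $0\le r<p^2$,
\[
\binom{w}{p^2q+r}\equiv\sum_{|\lambda|<p}\binom{pa}{pq+\lambda}\binom{p^2-1}{r-\lambda p}\pmod{p^2},
\]
and the analogous expansion holds for $\nminusw$ with $a'$ and an index $\mu$. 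By \cref{thm:Lucas}, $p\mid\binom{pa}{pq+\lambda}$ whenever $\lambda\neq 0$, because the last base-$p$ digit of $pa$ is $0$.

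Now multiply the two expansions and sum over $r$:
\begin{itemize}
\item Pairs with both $\lambda\neq0$ and $\mu\neq0$ contribute $0$ modulo $p^2$.
\item For the remaining pairs, the $r$-sum is a complete Vandermonde sum, since $\binom{p^2-1}{r}$ is supported exactly on $0\le r\le p^2-1$. It equals $\binom{2p^2-2}{p^2-1+(\lambda-\mu)p}$.
\item When $\lambda=\mu=0$, this is your central coefficient, and it is divisible by $p^2$ by the two carries.
\item When exactly one of $\lambda,\mu$ is nonzero, the lower index has last base-$p$ digit $p-1$, while $2p^2-2$ has last digit $p-2$. Lucas gives one factor of $p$, and the outer coefficient $\binom{pa}{pq+\lambda}$ or $\binom{pa'}{pq+\mu}$ supplies the second.
\end{itemize}
So the error terms of your naive $p^2$-adic factorization do not need to cancel against one another across the block. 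Each one is already divisible by $p^2$ once summed over $r$, and this is exactly where $b$ and $c$ are used jointly.
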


%\begin{proposition} [Corollary 12 in~\cite{EtSc04}]
%\label{prop:lastEtSc}
%For any given $e \geq 2$, $e \equiv -1~(\mmod~p^2)$, $p$ prime, there are finitely many $e$-perfect codes in the Johnson scheme.
%\end{proposition}

In the next few results we improve on the result in~\cite{EtSc04} regarding the strength of the code,
i.e., $\varphi \geq \lfloor w/2 \rfloor$ for large enough $w$. We first prove a new lower bound
on the strength of the code. After that it is proved that this bound is always better than the lower bound of $\lfloor w/2 \rfloor$
proved in~\cite{EtSc04}. Moreover, our bound holds for any weight $w$ compared to $w$ large enough in~\cite{EtSc04}.
\begin{theorem}
\label{thm:regular for all w}
If $\cC$ is an $e$-perfect code in \textup{J}$(n,w)$, where $e \geq 2$, then $\cC$
is $\bigl(w-\lfloor e\sqrt{w}\rfloor\bigr)$-regular.
\end{theorem}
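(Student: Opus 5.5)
The plan is to apply \cref{prop:main_t_regular}. It suffices to show that $\sigma_e(w,\delta,m)>0$, in particular $\sigma_e(w,\delta,m)\neq 0$, for every integer $1\le m\le w-\lfloor e\sqrt{w}\rfloor$. Since the polynomial in \cref{eq:the_key_polynomial} has alternating signs in $j$, I first rewrite it in a form where positivity can be read off.

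\textbf{Step 1: rewrite the inner sum.} Expand the last factor by Vandermonde's identity,
$\binom{w+\delta-t+j}{i}=\sum_{l}\binom{j}{l}\binom{w+\delta-t}{i-l}$, and use $\binom{t}{j}\binom{j}{l}=\binom{t}{l}\binom{t-l}{j-l}$. Substituting $j=l+s$, the remaining sum is
$$\sum_{s}(-1)^s\binom{t-l}{s}\binom{w-l-s}{i-l-s}=\binom{w-t}{i-l},$$
which is the standard identity $\sum_s(-1)^s\binom{a}{s}\binom{b-s}{c-s}=\binom{b-a}{c}$. Writing $u=w-t$, this gives
$$\sigma_e(w,\delta,t)=\sum_{i=0}^{e}\sum_{l=0}^{i}(-1)^l\binom{t}{l}\binom{u}{i-l}\binom{u+\delta}{i-l}.$$
I would verify this identity carefully, since everything else rests on it.

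\textbf{Step 2: show the terms decrease in absolute value.} For fixed $i$, let $T_l$ denote the absolute value of the $l$-th term. For $l<i$ the ratio is
$$\frac{T_{l+1}}{T_l}=\frac{t-l}{l+1}\cdot\frac{(i-l)^2}{(u-i+l+1)(u+\delta-i+l+1)}.$$
Using $\delta\ge 0$, $t\le w$ and $i-l\le e$, this is at most $w e^2/(u-e+1)^2$ in the worst case $l=0$. The worst case can be sharpened: the factor $(i-l)^2$ shrinks as $l$ grows while the denominators grow. I would try to show that $T_{l+1}/T_l<1$ whenever $u\ge\lfloor e\sqrt w\rfloor$.

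Once the terms strictly decrease, the alternating sum over $l$ has the sign of its first term $\binom{u}{i}\binom{u+\delta}{i}\ge 0$, and it is strictly positive for $i=0$. Summing over $i$ then gives $\sigma_e>0$, and \cref{prop:main_t_regular} yields $(w-\lfloor e\sqrt w\rfloor)$-regularity.

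\textbf{Main obstacle.} The crude bound $we^2/(u-e+1)^2$ is not below $1$ when $u=\lfloor e\sqrt w\rfloor$. The loss of $e$ in the denominator and the floor both cost something. The step I expect to be hardest is therefore a sharper estimate. I would first try to pair consecutive terms $T_{2k}-T_{2k+1}$ and show that each pair is nonnegative using the exact ratio rather than its worst case. For instance, for $l=0$ the ratio is $t\,i^2/\bigl((u-i+1)(u+\delta-i+1)\bigr)$. Here I would exploit the factor $i^2\le e^2$ together with the fact that when $i$ is close to $e$, the term $\binom{u}{i}$ dominates the contributions of smaller $i$. If necessary, I would instead compare the whole sum with its $i=e$, $l=0$ term, letting positive terms with smaller $i$ absorb the deficits.

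The small-$w$ or degenerate regime, where $u<e$ or $t$ is tiny, must be checked separately. There, some binomials vanish and the sum truncates, so positivity should follow directly.
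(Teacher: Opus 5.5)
Your Step 1 is correct, and it is the same expansion the paper uses. The paper writes $\sigma_e=\sum_i\frac{\binom{w}{i}}{\binom{w}{t}}S_i$ with $S_i=\sum_k(-1)^kA_k$. Since $\binom{w}{i}\binom{i}{k}\binom{w-i}{t-k}=\binom{w}{t}\binom{t}{k}\binom{w-t}{i-k}$, its $(i,k)$ term is exactly your $(-1)^l\binom{t}{l}\binom{u}{i-l}\binom{u+\delta}{i-l}$. Your ratio $T_{l+1}/T_l$ is the paper's \eqref{eq:basic_ration}, and your Vandermonde/inclusion--exclusion derivation is a slightly more direct route than the paper's bivariate coefficient extraction.

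\textbf{The gap.} You stop at the one quantitative step the proof needs, call it the main obstacle, and offer only vague substitutes (exact-ratio pairing, absorbing deficits across different $i$). The obstacle is self-inflicted. After bounding $\frac{t-l}{l+1}\le t$, you replaced $t$ by $w$, which throws away the saving $t=w-u$. Keep it instead. For $1\le t\le w-\lfloor e\sqrt w\rfloor$, i.e.\ $u\ge\lfloor e\sqrt w\rfloor$, every ratio is at most $e^2t/(u-e+1)^2$, and
\[
e^2t\le e^2\bigl(w-e\sqrt w+1\bigr)\le e^2\bigl(w-2\sqrt w+1\bigr)=(e\sqrt w-e)^2<\bigl(\lfloor e\sqrt w\rfloor-e+1\bigr)^2\le(u-e+1)^2 .
\]
The steps are justified as follows.
\begin{enumerate}
\item The first step uses $t\le w-\lfloor e\sqrt w\rfloor$ together with $\lfloor x\rfloor>x-1$.
\item The second step uses $e\ge2$.
\item The fourth step uses $\lfloor e\sqrt w\rfloor+1>e\sqrt w$ and $e\sqrt w-e\ge0$.
\end{enumerate}
Hence all ratios are below $1$, the pairs $T_{2k}-T_{2k+1}$ are nonnegative, and the $i=0$ term equals $1$, so $\sigma_e(w,\delta,t)>0$. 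This is exactly where the hypothesis $e\ge2$ enters; your crude estimate never used it.

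\textbf{The degenerate regime is not an issue.} Since $u\ge\lfloor e\sqrt w\rfloor\ge e\ge i-l$, all of $\binom{u}{i-l}$ and $\binom{u+\delta}{i-l}$ are positive, and every denominator satisfies $u-i+l+1\ge1$. The only vanishing terms are those with $l>t$. They sit at the tail of the alternating sum after a nonincreasing run of positive terms, so they do not affect nonnegativity.

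With this one-line fix, your argument coincides with the paper's proof.
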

\begin{proof}
For $t$-regularity, we examine the polynomial $\sigma_e(w,\delta,t)$ and develop \cref{eq:the_key_polynomial}
to find the largest $t$ such that $\sigma_e(w,\delta,t) >0$ and also $\sigma_e(w,\delta,t') >0$ for each $t' <t$.
Using the binomial equality $\binom{t}{j}\binom{w-j}{i-j}= \frac{\binom{w}{i}}{\binom{w}{t}} \binom{i}{j} \binom{w-j}{t-j}$,
we may write
$$
\sigma_e(w,\delta,t)= \sum_{i=0}^{e} \frac{\binom{w}{i}}{\binom{w}{t}} \sum_{j=0}^{i}(-1)^j \binom{i}{j} \binom{w-j}{t-j} \binom{w+\delta-t+j}{i}.
$$
For $0\le i\le e$, we define the inner sums
$$
S_i= \sum_{j=0}^{i}(-1)^j \binom{i}{j} \binom{w-j}{t-j} \binom{w+\delta-t+j}{i}.
$$
Trivially $S_0=\binom{w}{t}>0$ and hence to prove the $t$-regularity it is sufficient to prove that $S_i \geq 0$ for all $1 \leq i \leq e$.

Denote by $[x^n]g(x)$ the coefficient of $x^n$ in $g(x)$.
Consider the coefficient extractions $\binom{w-j}{t-j}= [x^t]x^j(1+x)^{w-j}$ and $\binom{w+\delta-t+j}{i}= [y^i](1+y)^{w+\delta-t+j}$ to obtain
\begin{align*}
S_i&=[x^t y^i]\sum_{j=0}^{i}(-1)^j \binom{i}{j}x^j(1+x)^{w-j}(1+y)^{w+\delta-t+j}\\
&=[x^t y^i](1+x)^w (1+y)^{w+\delta-t} \sum_{j=0}^{i}\binom{i}{j}\left(-\frac{x(1+y)}{1+x}\right)^j
\end{align*}
Since
$$
\sum_{j=0}^{i}\binom{i}{j}\Bigl(-\frac{x(1+y)}{1+x}\Bigr)^j =\Bigl(1-\frac{x(1+y)}{1+x}\Bigr)^i= \Bigl(\frac{(1+x)-x(1+y)}{1+x}\Bigr)^i=\Bigl(\frac{1-xy}{1+x}\Bigr)^i ,
$$
it follows that
\begin{align*}
S_i&=[x^t y^i](1+x)^w (1+y)^{w+\delta-t} \left(\frac{1-xy}{1+x}\right)^i\\
&=[x^t y^i](1+x)^{w-i} (1+y)^{w+\delta-t} (1-xy)^i,
\end{align*}
Expanding $(1-xy)^i= \sum_{k=0}^i (-1)^k\binom{i}{k}x^k y^k$, we have that
\begin{align*}
S_i&= [x^t y^i](1+x)^{w-i} (1+y)^{w+\delta-t} \sum_{k=0}^i (-1)^k \binom{i}{k}x^k y^k\\
&= \sum_{k=0}^i (-1)^k\binom{i}{k} [x^t y^i](1+x)^{w-i} (1+y)^{w+\delta-t}x^k y^k\\
&= \sum_{k=0}^i (-1)^k\binom{i}{k} \bigg([x^t]x^k(1+x)^{w-i}\bigg) \bigg([y^i]y^k(1+y)^{w+\delta-t}\bigg)\\
&= \sum_{k=0}^i (-1)^k\binom{i}{k} \binom{w-i}{t-k}\binom{w+\delta-t}{i-k}.
\end{align*}
Define
$$
A_k \triangleq \binom{i}{k}\binom{w-i}{t-k}\binom{w+\delta-t}{i-k},
$$
and hence
$$
S_i= \sum_{k=0}^i (-1)^k A_k .
$$
If $i$ is odd, then
$$
S_i = \sum_{k=0}^{(i-1)/2} (A_{2k} - A_{2k+1})
$$
and if $i$ is even, then
$$
S_i = \sum_{k=0}^{(i-2)/2} (A_{2k} - A_{2k+1}) +A_i .
$$
Since $A_i = \binom{i}{i}\binom{w-i}{t-i}\binom{w+\delta-t}{i-i} >0$, it follows that to prove that $S_i \geq 0$ for all $1 \leq i \leq e$,
it suffices to prove that each $A_k-A_{k+1}$ is nonnegative for even $k$.
Consider $A_k$ and $A_{k+1}$ for even $k$, and the fraction
\begin{equation}
\label{eq:basic_ration}
\frac{A_{k+1}}{A_k}= \frac{i-k}{k+1} \cdot \frac{t-k}{w-i-t+k+1} \cdot \frac{i-k}{w+\delta-t-i+k+1}.
\end{equation}
%Since \(a\ge 0,k\le w/2\) and \(0\leq t\leq i\leq e\), the ratio $A_{t+1}/A_t$ is at most \(\frac{e^2\lfloor w/2\rfloor}{(\lceil w/2\rceil-e+1)^2}\) with \(a=0,k=\lfloor w/2\rfloor, t=0,i=e\). Thus the sum of each even-odd pair is nonnegative if
%\[e^2\lfloor w/2\rfloor\le (\lceil w/2\rceil-e+1)^2.\]
%We easily verify that for \(w\ge 2e^2+4e\), the above inequality holds.
%However, the condition \(w\ge 2e^2+4e\) has been met due to the lower bounds in Theorem~\cref{thm:lower bound of w}.
%Hence, all $S_i$ are nonnegative and then
%\[\sigma_e(w,a,k)>0 \qquad \text{for every }1\le k\le \left\lfloor w/2\right\rfloor.\]
%Therefore, by Theorem~18 of \cite{etzion2004perfect}, all $e$-perfect codes in $J(2w+a,w)$ are $\lfloor\frac{w}{2}\rfloor$-regular.
Since $0 \leq k \leq i \leq e$ we have that $i-k \leq e$ and $t -k \leq t$, and hence
$$
\frac{A_{k+1}}{A_k}= \frac{i-k}{k+1} \cdot \frac{t-k}{w-i-t+k+1} \cdot \frac{i-k}{w+\delta-t-i+k+1}\leq \frac{e^2 t}{(w-t-e+1)^2}.
$$
For $t \leq w-\lfloor e\sqrt{w}\rfloor$ and $e \geq 2$, we have
%For $t \leq w-\lfloor e\sqrt{w}\rfloor$, let $t=w-\ell$, where $\ell \geq e \sqrt{w} -1$ and $e \geq 2$, we have
%$$
%e^2 t =e^2 (w-\ell) \leq e^2 (w-e\sqrt{w} +1)\leq e^2 (w -2\sqrt{w}+1)=(e\sqrt{w}-e)^2 \leq (\ell -e+1)^2 =(w-t-e+1)^2
%$$
$$
e^2 t \leq e^2 (w-e\sqrt{w} +1)\leq e^2 (w -2\sqrt{w}+1)=(e\sqrt{w}-e)^2 \leq (w-t-e+1)^2
$$
and hence
$$
\frac{A_{k+1}}{A_{k}} \leq \frac{e^2 t}{(w-t-e+1)^2} \leq 1 ,
$$
which implies that $S_i \geq 0$ for all $1 \leq i \leq e$, which implies that
$\sigma_e(w,\delta,t) >0$ for any $t \leq w - e \sqrt{w}$.

Thus, $\cC$ is $\bigl(w-\lfloor e\sqrt{w}\rfloor\bigr)$-regular.
\end{proof}

The strength $\varphi$ of an $e$-perfect code in J$(n,w)$ must be less than $w$ since
not all $w$-subsets are codewords. The difference $d=w-\varphi$ is called the \emph{strength redundancy}
and it is of great importance in our exposition.

\begin{corollary}
\label{cor:str_redund}
If $\cC$ is an $e$-perfect code in \textup{J}$(n,w)$, where $e \geq 2$, then the strength redundancy~$d$
of $\cC$ satisfies $d\leq \lfloor e\sqrt{w}\rfloor$.
\end{corollary}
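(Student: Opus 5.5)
The corollary follows directly from \cref{thm:regular for all w} by unwinding the definition of strength. Let $t_0 = w-\lfloor e\sqrt{w}\rfloor$. By \cref{thm:regular for all w}, $\cC$ is $t_0$-regular. By the proposition of~\cite{Etz06}, a code is $t$-regular if and only if it forms a $t$-design, so $\cC$ is a $t_0$-design. Because the strength $\varphi$ is the largest $t$ for which $\cC$ is a $t$-design, we get $\varphi \geq t_0$. Hence
$$
d = w-\varphi \leq w - t_0 = \lfloor e\sqrt{w}\rfloor .
$$

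The only point that needs care is the edge case where $t_0 \leq 0$, that is, $\lfloor e\sqrt{w}\rfloor \geq w$. Here the regularity statement is vacuous, so it should not be relied on. Instead I will note that every code is trivially a $0$-design, so $\varphi \geq 0$ always holds. This gives $d = w - \varphi \leq w \leq \lfloor e\sqrt{w}\rfloor$, and the bound holds in this case too.

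Apart from this edge case, nothing in the argument is delicate. All the substance lies in \cref{thm:regular for all w}, and the corollary is a restatement of it in terms of the strength redundancy $d=w-\varphi$.
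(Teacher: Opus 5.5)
Your proof is correct and follows the paper's route exactly: the corollary is the immediate consequence of \cref{thm:regular for all w}, since $(w-\lfloor e\sqrt{w}\rfloor)$-regularity means $\cC$ is a $(w-\lfloor e\sqrt{w}\rfloor)$-design, so $\varphi\ge w-\lfloor e\sqrt{w}\rfloor$ and $d=w-\varphi\le\lfloor e\sqrt{w}\rfloor$. Your separate treatment of the case $\lfloor e\sqrt{w}\rfloor\ge w$ via $\varphi\ge 0$ is valid but not needed, because \cref{prop:lower bound of w} gives $w>e^2$ and hence $w-\lfloor e\sqrt{w}\rfloor\ge 1$.
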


For the next consequence we need the following result from~\cite{EtSc04}, which will be improved later for larger radii.
\begin{proposition}[Theorems~15 and~16 of \cite{EtSc04}]
\label{prop:lower bound of w}
Assume that there exists an $e$-perfect code in J$(n,w)$.
\begin{itemize}
\item If \(n>2w\) and \(n\) is odd, then \(w>\frac{e(e+1)(e+2)}{2}+2e+1\).
\item If \(n>2w\) and \(n\) is even, then \(w>e(e+1)(e+2)+2e+1\).
\item If \(n=2w\), then \(w>2e^2+4e+1\).
\end{itemize}
\end{proposition}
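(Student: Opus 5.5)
The plan is to argue by contradiction via \cref{prop:main_t_regular}. Since not every $w$-subset is a codeword, $\cC$ cannot be $w$-regular: a $w$-design in J$(n,w)$ must contain all $w$-subsets. So if an $e$-perfect code exists in J$(2w+\delta,w)$, there must be some integer $1\le m\le w$ with $\sigma_e(w,\delta,m)=0$. It therefore suffices to show $\sigma_e(w,\delta,m)>0$ for every $1\le m\le w$ whenever $w$ is at most the stated threshold. I would use the decomposition from the proof of \cref{thm:regular for all w},
$$
\sigma_e(w,\delta,t)=\sum_{i=0}^{e}\frac{\binom{w}{i}}{\binom{w}{t}}S_i, \qquad S_i=\sum_{k=0}^{i}(-1)^k A_k, \qquad A_k=\binom{i}{k}\binom{w-i}{t-k}\binom{w+\delta-t}{i-k},
$$
and show $\sum_i \binom{w}{i}S_i>0$ directly rather than requiring each $S_i\ge 0$. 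The termwise bound of \cref{thm:regular for all w} only covers $t\le w-e\sqrt{w}$, so the real work is the range $w-e\sqrt{w}<t\le w$.

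In that range I would substitute $s=w-t$, which is small (at most $e\sqrt{w}$), and rewrite each $A_k$ in terms of $s$. Then $\binom{w-i}{t-k}=\binom{w-i}{s-i+k}$ vanishes unless $k\ge i-s$. This makes $S_i$ a short alternating sum that can be evaluated as a polynomial in $w,\delta,s$. Its leading behaviour comes from $k=i$ (or from the lowest admissible $k$), with the correction terms of relative size roughly $e^2 t/\bigl((s-e+1)(s+\delta-e+1)\bigr)$.

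The cleanest route is probably to recognise $\sigma_e(w,\delta,w-s)$, up to a positive factor, as a Lloyd/Eberlein-type polynomial in $s$ of degree about $e$. Its roots must then lie in a window whose location depends on $w$ and $\delta$. Forcing a root onto an integer $s\in[0,e\sqrt{w}]$ should then yield a polynomial inequality in $w$ and $e$, and this inequality fails for $w\le e(e+1)(e+2)/2+2e+1$.

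For the parity distinction I would bring in the further divisibility constraints. The embedded Steiner systems S$(e+1,2e+1,w)$ and S$(e+1,2e+1,n-w)$ of \cref{prop:embedded_Steiner}, together with \cref{prop:necessary_Steiner}, restrict $w$ and $n-w$ modulo small numbers. I expect that when $n$ is even, i.e.\ $\delta$ even, the root $m$ must additionally satisfy a parity condition. That condition would push the admissible root roughly twice as far and so double the bound to $e(e+1)(e+2)+2e+1$. When $n=2w$ ($\delta=0$), the symmetry $w\leftrightarrow n-w$ should make $\sigma_e$ factor more cleanly. There I would aim to show directly that the smallest possible integer root $m$ forces $(w-m)^2\gtrsim 2e(w-m)$, which gives $w>2e^2+4e+1$.

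The main obstacle is the second step: controlling the sign of $\sigma_e(w,\delta,w-s)$ for small $s$ precisely enough to get exactly the cubic thresholds rather than a cruder bound. What I would try first is to compute $\sigma_e$ explicitly for $e=2,3$ by symbolic expansion, identify the root structure in $s$, and extract the pattern by induction on $e$ using Pascal-type recursions for the $A_k$. The parity refinement for even $n$ is the part I am least sure of, and may instead require an integrality argument on $|\cC|$ from \cref{eq:code_size}.
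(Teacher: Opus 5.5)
The paper gives no proof of this statement; it imports it from \cite{EtSc04}. Your reduction is logically sound: a $w$-regular code would be all of $\binom{[n]}{w}$, so by \cref{prop:main_t_regular} some $\sigma_e(w,\delta,m)$ with $m\le w$ must vanish. However, the claim you reduce to is false, so the plan cannot be completed. Take $e=2$, $w=7$, $\delta=1$. Then $n=15$ is odd and $w\le 17=\frac{e(e+1)(e+2)}{2}+2e+1$. With $t=6$, the three inner sums of \cref{eq:the_key_polynomial} are $1$, $14-18=-4$ and $21-108+90=3$, so $\sigma_2(7,1,6)=0$. These are exactly $v_0,v_1,v_2$ evaluated at the eigenvalue $\theta_6=-4$ of J$(15,7)$. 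Indeed $4L_2(x)=x^2-9x-52=(x-13)(x+4)$, so the Lloyd polynomial meets one eigenvalue by coincidence, well below the threshold. The other root $13$ is not an eigenvalue, so the full Lloyd condition of \cref{lem:section5-Lloyd-factorization} would exclude this case, but your single-zero reduction does not. Similarly, for $\delta=0$ and $w=2e+1$ the trivial two-word code forces a zero, e.g.\ $\sigma_2(5,0,2)=1+7-8=0$. So no argument based on $\sigma_e$ alone can use the nontriviality hypothesis. Beyond this, your parity refinement and your treatment of $n=2w$ are guesses rather than arguments.

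What actually produces the two cubic thresholds is arithmetic on $\delta$ combined with the strict Roos bound. With $n=2w+\delta$, \cref{eq:RoosNotMet} is equivalent to $e\delta<w-2e-1$, i.e.\ $w>e\delta+2e+1$; this is where the ``$+2e+1$'' comes from. Next, \cref{prop:embedded_Steiner} and \cref{prop:necessary_Steiner} with $i=e$ give $e+1\mid w-e$ and $e+1\mid \nminusw-e$, so $e+1\mid\delta$. Also \cref{eq-congruence-e+2} gives $e+2\mid 2\delta$. Hence $\frac{(e+1)(e+2)}{2}\mid\delta$, and $\delta>0$ gives the first bullet. If $\delta$ is even, one gets $(e+1)(e+2)\mid\delta$, in three cases:
\begin{itemize}
\item $e\equiv 0 \pmod 4$: $\frac{(e+1)(e+2)}{2}$ is odd, so this is automatic.
\item $e$ odd: $e+2$ is odd, so $e+2\mid\delta$ directly.
\item $e\equiv 2 \pmod 4$: $4\mid e+2\mid 2(w+2)$ forces $w+1$ to be odd. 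Then $(e+1)(e+2)\mid(w+1)(w+2)$ from \cref{prop:four_divide} puts the full power of $2$ in $e+2$ into $w+2$, and together with $e+2\mid 2(w+2)$ this gives $e+2\mid w+2$. Likewise $e+2\mid\nminusw+2$, so $e+2\mid\delta$.
\end{itemize}
Inserting $\delta\ge(e+1)(e+2)$ into $w>e\delta+2e+1$ gives the second bullet. For $n=2w$ the Roos inequality only gives $w>2e+1$, so the third bullet needs a separate input, which the paper takes from \cite{EtSc04}. Your heuristic $(w-m)^2\gtrsim 2e(w-m)$ is not derived from anything and cannot replace that input.
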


\begin{corollary}
\label{cor:more_str_bound}
If $\cC$ is an $e$-perfect code in \textup{J}$(n,w)$, where $e \geq 2$, then the strength $\varphi$ of $\cC$ is at least $\lfloor w/2 \rfloor$
and the strength redundancy $d$ of $\cC$ satisfies $d\leq \lceil w/2 \rceil$.
\end{corollary}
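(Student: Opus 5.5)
The plan is to combine \cref{thm:regular for all w} with the lower bounds on $w$ in \cref{prop:lower bound of w}. By \cref{thm:regular for all w}, $\cC$ is $\bigl(w-\lfloor e\sqrt{w}\rfloor\bigr)$-regular. By the proposition of~\cite{Etz06}, regularity is equivalent to being a design, so the strength satisfies $\varphi \geq w-\lfloor e\sqrt{w}\rfloor$. It therefore suffices to show
$$
w-\lfloor e\sqrt{w}\rfloor \geq \lfloor w/2 \rfloor,
$$
or equivalently $\lfloor e\sqrt{w}\rfloor \leq w-\lfloor w/2\rfloor = \lceil w/2\rceil$. This also gives the second claim at once, since $d = w-\varphi \leq \lfloor e\sqrt{w}\rfloor \leq \lceil w/2 \rceil$ by \cref{cor:str_redund}.

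Since $\lceil w/2\rceil$ is an integer, the required inequality follows from the real inequality $e\sqrt{w} \leq w/2$, that is, $w \geq 4e^2$. To get this, take the weakest of the three cases of \cref{prop:lower bound of w}. In every case $w > 2e^2+4e+1$, and the odd case gives the smaller cubic bound $e(e+1)(e+2)/2+2e+1$. So I will check that each lower bound is at least $4e^2$ for $e \geq 2$.

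\textbf{Cubic cases.} For $e \geq 2$ we have $e(e+1)(e+2)/2 \geq 6e^2 \cdot \frac{1}{2}\cdot\frac{(e+1)(e+2)}{6e}$. More simply, $(e+1)(e+2) \geq 8e$ holds whenever $e^2-5e+2 \geq 0$, i.e.\ for $e \geq 5$. For $e=2,3,4$ I will compute directly:
\begin{itemize}
\item $e=2$: $12+5=17 \geq 16$;
\item $e=3$: $30+7=37 \geq 36$;
\item $e=4$: $60+9=69 \geq 64$.
\end{itemize}
The even case, $e(e+1)(e+2)+2e+1$, is larger still.

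\textbf{Case $n=2w$.} This is the main obstacle. The bound $w > 2e^2+4e+1$ is smaller than $4e^2$ once $e \geq 3$, so the crude inequality $e\sqrt{w}\le w/2$ is not available for all $w$. Two routes are possible here. The first is to exploit the floors: we only need $\lfloor e\sqrt{w}\rfloor \leq \lceil w/2\rceil$, but this still fails for $w$ near $2e^2$, since there $e\sqrt{w}\approx \sqrt2 e^2 > e^2 \approx w/2$. So the floors alone will not rescue the argument.

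The second route, which I would try first, is to go back into the proof of \cref{thm:regular for all w} and use the sharper ratio bound in the $\delta=0$ situation. We need $A_{k+1}/A_k \leq 1$ for $t \leq \lfloor w/2\rfloor$. Using the exact ratio \cref{eq:basic_ration} with $t\le w/2$, the bound becomes $e^2\lfloor w/2\rfloor \le (\lceil w/2\rceil-e+1)^2$. This holds for $w \geq 2e^2+4e$, which is guaranteed in all three cases by \cref{prop:lower bound of w}. It gives $\sigma_e(w,\delta,m)>0$ for all $m\le \lfloor w/2\rfloor$, and hence $\lfloor w/2\rfloor$-regularity by \cref{prop:main_t_regular}. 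The bound on $d$ then follows from $d=w-\varphi\le w-\lfloor w/2\rfloor=\lceil w/2\rceil$.

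I therefore expect to present the direct ratio argument as the uniform proof. The cubic cases can alternatively be read off from \cref{thm:regular for all w}.
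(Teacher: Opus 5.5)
Your final route is exactly the paper's proof: you rerun the even/odd pairing from the proof of \cref{thm:regular for all w} with $t\le\lfloor w/2\rfloor$, bound the ratio \cref{eq:basic_ration} by $e^2\lfloor w/2\rfloor/(\lceil w/2\rceil-e+1)^2$, and use $w\ge 2e^2+4e$ from \cref{prop:lower bound of w}; you are also right that the shortcut through \cref{thm:regular for all w}, which needs $w\ge 4e^2$, fails when $n=2w$. You leave unchecked, as the paper does, that $e^2\lfloor w/2\rfloor\le(\lceil w/2\rceil-e+1)^2$ holds for $w\ge 2e^2+4e$, but it does: for $w=2m$ the difference $m^2-(e^2+2e-2)m+(e-1)^2$ is positive at $m=e^2+2e$ and increasing beyond it, and the odd case is analogous.
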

\begin{proof}
In \cref{eq:basic_ration} we have $t \leq w/2$ and $0 \leq k \leq i \leq e$.
The ratio in \cref{eq:basic_ration} is at most $\frac{e^2\lfloor w/2\rfloor}{(\lceil w/2\rceil-e+1)^2}$
and it is verified when we take $\delta=0$, $t=\lfloor w/2 \rfloor$, $k=0$, and $i=e$ in \cref{eq:basic_ration}.
Therefore, the sum of each even-odd pair is nonnegative if
$$
e^2\lfloor w/2\rfloor\le (\lceil w/2\rceil-e+1)^2.
$$
For $w\ge 2e^2+4e$, the above inequality holds and it is obtained for each $e$-perfect code due to \cref{prop:lower bound of w}.
Applying the same steps as in the proof of \cref{thm:regular for all w} yields strength $\varphi \geq \lfloor w/2 \rfloor$
and strength redundancy $d\leq \lceil w/2 \rceil$.
\end{proof}

By combining \cref{prop:p2_divides_ball} with \cref{thm:regular for all w} we have the following theorem.
\begin{theorem}
\label{thm:sufficient condition}
If $e \geq 2$ and $\Phi_e(n,w)\equiv 0~(\mmod~{p^2})$
for some prime $p$, then no $e$-perfect code exists in \textup{J}$(n,w)$.
\end{theorem}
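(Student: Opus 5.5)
The plan is to combine \cref{prop:p2_divides_ball} with the strength bound supplied by \cref{thm:regular for all w} and \cref{cor:more_str_bound}. Suppose, for contradiction, that $e \geq 2$, that $p^2$ divides $\Phi_e(n,w)$ for some prime $p$, and that $\cC$ is an $e$-perfect code in J$(n,w)$. By complementation (replace $\cC$ by $\bar{\cC}$, which is $e$-perfect in J$(n,n-w)$, and note $\Phi_e(n,w)=\Phi_e(n,n-w)$ by the symmetry of \cref{eq:ball_size}) we may assume $n \geq 2w$. \cref{prop:p2_divides_ball} forbids any $\lfloor w/2\rfloor$-regular $e$-perfect code under this divisibility hypothesis. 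Hence it suffices to show that $\cC$ is $\lfloor w/2\rfloor$-regular.

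For this, \cref{thm:regular for all w} gives that $\cC$ is $(w-\lfloor e\sqrt{w}\rfloor)$-regular. A $t$-regular code is $t'$-regular for every $t'<t$, so we only need $w-\lfloor e\sqrt{w}\rfloor \geq \lfloor w/2\rfloor$. This holds whenever $e\sqrt{w} \leq w/2$, that is, whenever $w \geq 4e^2$. That covers the two cases $n>2w$ of \cref{prop:lower bound of w}: there $w > e(e+1)(e+2)/2+2e+1$, which exceeds $4e^2$ for every $e \geq 2$.

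The case $n=2w$ is the one step that does not follow from \cref{thm:regular for all w}. There \cref{prop:lower bound of w} only gives $w > 2e^2+4e+1$, which may be smaller than $4e^2$. Instead I would invoke \cref{cor:more_str_bound} directly. Its proof uses exactly this lower bound $w\ge 2e^2+4e$ to show $\varphi \geq \lfloor w/2\rfloor$, hence $\lfloor w/2\rfloor$-regularity by the equivalence of $t$-regularity and $t$-designs. In fact \cref{cor:more_str_bound} alone covers every case, so the cleanest proof simply cites it.

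With $\lfloor w/2\rfloor$-regularity established in all cases, \cref{prop:p2_divides_ball} yields the contradiction, and the theorem follows. The only point needing care is checking that the inequality $e^2\lfloor w/2\rfloor \leq (\lceil w/2\rceil -e+1)^2$ used in \cref{cor:more_str_bound} indeed holds at the thresholds of \cref{prop:lower bound of w}. This is a routine verification for $w \geq 2e^2+4e$.
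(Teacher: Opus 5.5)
Your proposal is correct and is essentially the paper's argument: you combine \cref{prop:p2_divides_ball} with $\lfloor w/2\rfloor$-regularity, obtained from \cref{thm:regular for all w} and \cref{cor:more_str_bound}. Your separate handling of $n=2w$ is warranted, since there $w-\lfloor e\sqrt{w}\rfloor$ can fall below $\lfloor w/2\rfloor$ (e.g.\ $e=4$, $w=50$ gives $22<25$). In that case it is \cref{cor:more_str_bound}, not \cref{thm:regular for all w} alone, that supplies the needed regularity.
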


\cref{prop:no-perfect-3} together with \cref{thm:sufficient condition} yield the first important nonexistence
result for $e$-perfect codes, reducing considerably the range of radii in which $e$-perfect codes can exist.
\begin{corollary}
\label{cor:square-factor-e-plus-one}
If $p^2$ divides $e+1$ for some prime $p$, then no $e$-perfect code exists in the Johnson scheme.
\end{corollary}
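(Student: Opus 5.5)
The corollary is a direct combination of \cref{prop:no-perfect-3} and \cref{thm:sufficient condition}. We argue by contradiction: suppose $p^2$ divides $e+1$ for some prime $p$, and that an $e$-perfect code $\cC$ exists in J$(n,w)$ for some $n$ and $w$. The hypothesis $p^2 \mid e+1$ is exactly the congruence $e \equiv -1 \pmod{p^2}$ required in \cref{prop:no-perfect-3}. Applying that proposition gives $p^2 \mid \Phi_e(n,w)$.

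Next we want to apply \cref{thm:sufficient condition}, which needs $e \geq 2$. Since $p \geq 2$, we have $e+1 \geq p^2 \geq 4$, so $e \geq 3$ and the condition holds automatically. In particular, the corollary never touches the radius $1$ case, which the paper has set aside. With $e \geq 2$ and $\Phi_e(n,w) \equiv 0 \pmod{p^2}$, \cref{thm:sufficient condition} says no $e$-perfect code exists in J$(n,w)$, contradicting the existence of $\cC$.

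Since $n$ and $w$ were arbitrary, no $e$-perfect code exists anywhere in the Johnson scheme. The trivial codes are excluded by the standing convention that ``perfect'' means nontrivial. For the Delsarte codes in J$(2w,w)$ with $e=(w-1)/2$, exclusion is also automatic, because for them the ball is exactly half of $\binom{2w}{w}$.

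There is no real obstacle here; all the substance lies in the two cited results. For context, \cref{thm:sufficient condition} rests on \cref{thm:regular for all w} together with \cref{cor:more_str_bound}. These supply the $\lfloor w/2\rfloor$-regularity that \cref{prop:p2_divides_ball} needs. The only point to check is that the hypotheses line up: the congruence $e \equiv -1 \pmod{p^2}$, and $e \geq 2$, which follows from $e \geq 3$ as shown above.
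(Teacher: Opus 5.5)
Your proof is correct and is exactly the paper's argument: \cref{prop:no-perfect-3} gives $p^2\mid\Phi_e(n,w)$, and \cref{thm:sufficient condition} then applies because $e+1\ge p^2\ge 4$ forces $e\ge 3$. Your side remark on the Delsarte codes is wrong, though: they are excluded only by the nontriviality convention, not by anything about the ball size. For example, $\{X,\bar{X}\}$ in J$(14,7)$ is a $3$-perfect code with $4\mid e+1$ and $\Phi_3(14,7)=1716\equiv 0 \pmod 4$.
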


It was proved in~\cite{Etz96} that many Steiner systems are embedded in any $e$-perfect code in J$(n,w)$.
More systems were found in~\cite{Etz01,EtSc04}. The following four systems were found to be embedded if
an $e$-perfect code exist in J$(n,w)$:

$$
\textup{S}(2,e+2,w+2), ~~~ \textup{S}(2,e+2,\nminusw+2),
$$
$$
\textup{S}(2,e+2,w-e+1), ~~~ \textup{S}(2,e+2,\nminusw-e+1).
$$

This implies the following important divisibility condition summarized in~\cite{EtSc04}.
\begin{proposition}
\label{prop:four_divide}
If there exists an $e$-perfect code in \textup{J}$(n,w)$, then
\begin{align*}
(e+1)(e+2)&\mid (w+1)(w+2),\\
(e+1)(e+2)&\mid (\nminusw+1)(\nminusw+2),\\
(e+1)(e+2)&\mid (w-e)(w-e+1),\\
(e+1)(e+2)&\mid (\nminusw-e)(\nminusw-e+1).
\end{align*}
\end{proposition}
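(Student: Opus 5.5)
The four divisibilities follow from the four embedded Steiner systems listed just above, each run through the necessary conditions of \cref{prop:necessary_Steiner} with $t=2$. Take a Steiner system S$(2,e+2,m)$, where $m$ will be one of $w+2$, $\nminusw+2$, $w-e+1$, $\nminusw-e+1$. For $i=0$, \cref{prop:necessary_Steiner} says that the number of blocks
$$
\binom{m}{2}\Big/\binom{e+2}{2}=\frac{m(m-1)}{(e+2)(e+1)}
$$
is an integer. Therefore $(e+1)(e+2)$ divides $m(m-1)$. The conditions for $i=1$ (a replication number) and $i=2$ (which is trivial) are not needed here.

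It remains to substitute the four values of $m$. For $m=w+2$ we have $m(m-1)=(w+2)(w+1)$, which gives the first line. For $m=\nminusw+2$ we get $(\nminusw+2)(\nminusw+1)$, the second line. For $m=w-e+1$ we get $(w-e+1)(w-e)$, the third line. For $m=\nminusw-e+1$ we get $(\nminusw-e+1)(\nminusw-e)$, the fourth line.

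The only substantive ingredient is the existence of these four embedded Steiner systems. The paper takes that from~\cite{Etz96,Etz01,EtSc04} as stated just before the proposition, and I would invoke it directly. Once that is granted, no step is an obstacle.

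If the existence of these systems were not to be assumed, the hard part would be re-deriving them. The natural route is to fix a codeword-free configuration near a $w$-set and use perfectness: every $w$-set at distance $e+1$ from a given codeword is covered exactly once. That forces a partition structure on pairs outside, respectively inside, the codeword, with blocks of size $e+2$. I would not reproduce this and would simply cite it.
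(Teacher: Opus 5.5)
Your proposal is correct and follows the same route as the paper. The paper also cites the embedded Steiner systems S$(2,e+2,w+2)$, S$(2,e+2,\nminusw+2)$, S$(2,e+2,w-e+1)$ and S$(2,e+2,\nminusw-e+1)$ from the literature, then applies the $i=0$ condition of \cref{prop:necessary_Steiner}: the number of blocks $\binom{m}{2}/\binom{e+2}{2}$ is an integer, so $(e+1)(e+2)\mid m(m-1)$ for each of the four values of $m$.
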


Since
$$
(w+1)(w+2)-(w-e)(w-e+1) =(e+1)\bigl(2(w+2)-(e+2)\bigr),
$$
the first and third divisibility conditions of \cref{prop:four_divide} imply
that $e+2\mid 2(w+2)$. Applying the same argument to $\nminusw$ with the second and fourth
divisibility conditions of \cref{prop:four_divide}, we obtain that $e+2\mid 2(\nminusw+2)$ and hence we have
\begin{equation}
\label{eq-congruence-e+2}
2(w+2)\equiv 2(\nminusw+2)\equiv 0~(\mmod ~e+2).
\end{equation}

\begin{theorem}
\label{thm:e-plus-2-mod-8}
If $8$ divides $e+2$ then no $e$-perfect code exists in any \textup{J}$(n,w)$.
% In particular, no \(6\)-perfect code exists.
\end{theorem}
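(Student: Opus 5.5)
The plan is to show that the hypothesis $8 \mid e+2$ forces $4 \mid \Phi_e(n,w)$. \cref{thm:sufficient condition} with $p=2$ then excludes any $e$-perfect code.

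\emph{Step 1: fix $w$ and $\nminusw$ modulo a power of $2$.} Write $e+2=2^a m$ with $m$ odd and $a\ge 3$. In the first condition of \cref{prop:four_divide}, the factor $w+1$ is odd whenever $w+2$ is even. Combining this with \cref{eq-congruence-e+2}, I want to deduce that $w+2$ is even (if $w+1$ were even, $2^a$ would have to divide $w+1$, which contradicts $e+2\mid 2(w+2)$). Then $2^a \mid w+2$, i.e. $w\equiv -2 \pmod{2^a}$. The same argument applied to $\nminusw$ gives $\nminusw\equiv -2\pmod{2^a}$, and hence $\delta\equiv 0\pmod{2^a}$. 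Also $e\equiv -2\pmod{2^a}$, so in particular $e\equiv 6\pmod 8$.

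\emph{Step 2: compute $\Phi_e(n,w)$ modulo $4$.} Write $T_i=\binom{w}{i}\binom{\nminusw}{i}$, so that $\Phi_e=\sum_{i=0}^e T_i$. I will use the consecutive ratio
$$
\frac{T_{i+1}}{T_i}=\frac{(w-i)(\nminusw-i)}{(i+1)^2}
$$
and track $2$-adic valuations. The key inputs are that $w,\nminusw\equiv -2 \pmod 8$, and Kummer's theorem (\cref{thm:Kummer}) for $v_2\binom{w}{i}$. Since $w+2\equiv 0\pmod{2^a}$, the low $a$ binary digits of $w$ are $1\cdots 10$. Consequently the number of carries in $i+(w-i)$ can be read off from the binary digits of $i$ for $i<2^a$. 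For larger $i$, the part of $i$ above the last $a$ bits interacts with the unknown high bits of $w$ and $\nminusw$; this is symmetric between $w$ and $\nminusw$ since they agree modulo $2^a$.

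My plan is to group the indices $0\le i\le e$ into blocks of length $2^a$ (the top block ends at $e\equiv -2$). Within each block I will show that the sum of the $T_i$ is $\equiv 0 \pmod 4$ relative to the block's leading term. This uses Lucas' theorem (\cref{thm:Lucas}) to factor each $\binom{w}{i}$ into a high-digit part and a low-digit part. Within a block, the odd terms should come in pairs $i, i'$ with equal low-digit contributions. The final truncated block contributes the complementary pattern that makes the total vanish modulo $4$.

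\emph{Main obstacle.} The main difficulty is Step 2: Lucas' theorem only gives information modulo $2$, but the target is a congruence modulo $4$. I would first try to handle this by writing $T_i$ modulo $4$ through the $2$-adic valuation (Kummer) plus the odd part. For the odd part I would use the ratio formula above, noting that $(w-i)(\nminusw-i)\equiv (i+2)^2 \pmod{2^a}$, so modulo $4$ the odd parts telescope like those of $\binom{-2}{i}^2=(i+1)^2$. This suggests comparing $\Phi_e$ with $\sum_{i=0}^{e}(i+1)^2$, suitably corrected by the high-bit factors. If $e+1\equiv -1 \pmod 8$, then $\sum_{i=0}^{e}(i+1)^2=\frac{(e+1)(e+2)(2e+3)}{6}$, which is divisible by $4$ since $8\mid e+2$. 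Making this comparison rigorous for $i\ge 2^a$, where the high bits matter, is the step I expect to require the most care.
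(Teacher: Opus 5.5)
Your Step 1 is correct and matches the paper. The paper in fact only uses the consequence $w\equiv\nminusw\equiv e\equiv 6\pmod 8$, which is all that is needed.

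\textbf{The gap is Step 2.} It carries the entire content of the theorem, you leave it as a plan, and the mechanism you suggest for it fails. The congruence $(w-i)(\nminusw-i)\equiv(i+2)^2\pmod{2^a}$ only fixes the odd part of each factor $w-i$ modulo $2^{a-\nu_2(i+2)}$. That is merely modulo $2$ when $\nu_2(i+2)=a-1$. So the odd parts of the $T_i$ do not telescope like those of $(i+1)^2$ modulo $4$, not even for $i<2^a$. Take $e=6$, $w=6$, $\nminusw=14$, so $w\equiv\nminusw\equiv-2\pmod 8$. At the step $i=2\to 3$ you get $(w-2)(\nminusw-2)=48$ against $(2+2)^2=16$. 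Consequently $T_4=\binom{6}{4}\binom{14}{4}=15015\equiv 3\pmod 4$, while $(4+1)^2\equiv 1$. The total $\Phi_6(20,6)=38760$ is still divisible by $4$, but not through any termwise comparison with $\sum_i(i+1)^2$. Your block-wise Lucas/Kummer plan gives no handle on odd parts modulo $4$. Nor is there a symmetry to exploit: the high bits of $w$ and $\nminusw$ genuinely differ.

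\textbf{The missing idea is to halve the problem before comparing consecutive terms.}

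For odd $i$, both $\binom{w}{i}$ and $\binom{\nminusw}{i}$ are even by Lucas, so $T_i\equiv 0\pmod 4$. For even indices, expand $(1+x)^{2m}=\bigl((1+x^2)+2x\bigr)^m$. The terms with $(2x)^{\ge 2}$ vanish modulo $4$, and the term with a single factor $2x$ contains only odd powers of $x$. Hence $\binom{2m}{2j}\equiv\binom{m}{j}\pmod 4$. Write $w=2u$, $\nminusw=2v$, $e=2\varepsilon$, where $u\equiv v\equiv\varepsilon\equiv 3\pmod 4$. Then $\Phi_e(n,w)\equiv\sum_{j=0}^{\varepsilon}\binom{u}{j}\binom{v}{j}\pmod 4$.

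Now your ratio idea works. The product $(u-2k)(v-2k)$ is a product of two odd numbers congruent modulo $4$, hence $\equiv 1\pmod 4$. So $\binom{u}{2k+1}\binom{v}{2k+1}\equiv\binom{u}{2k}\binom{v}{2k}\pmod 4$, and therefore $\Phi_e(n,w)\equiv 2\sum_{k=0}^{(\varepsilon-1)/2}\binom{u}{2k}\binom{v}{2k}\pmod 4$.

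What remains is only a parity statement. The $2^1$-bit of $u$ and of $v$ is $1$, so by Lucas the terms with indices $4m$ and $4m+2$ have the same parity. Since $\varepsilon\equiv 3\pmod 4$, these indices pair off as $\{0,2\},\{4,6\},\ldots,\{\varepsilon-3,\varepsilon-1\}$, which gives $4\mid\Phi_e(n,w)$.
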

\begin{proof}
Assume for contradiction that an $e$-perfect code exists in \textup{J}$(n,w)$ when $8$ divides $e+2$.
By \cref{thm:sufficient condition}, it is suffices to prove that $4 \mid \Phi_e(n,w)$ to obtain a contradiction.

Since $8$ divides $e+2$, it follows by
\cref{eq-congruence-e+2} that $w\equiv \nminusw\equiv 2\pmod 4$. Let $e+2 = r \cdot 2^k$, where $r$ is odd and $k \geq 3$.
By the divisibility conditions of \cref{prop:four_divide},
$$
(e+1)(e+2)\mid(w+1)(w+2) ~~ \text{and} ~~ (e+1)(e+2)\mid(\nminusw+1)(\nminusw+2)
$$
and since $w+1$ and $n-w+1$ are odd, it follows that $2^k$ divides $w+2$ and $\nminusw+2$. Hence, since $e+2 \equiv 0~(\mmod~8)$, we have that
$$
w\equiv \nminusw\equiv 6~(\mmod~8).
$$
Let $w=2u$, $\nminusw=2v$, and $e=2\varepsilon$. Since $w\equiv \nminusw\equiv e \equiv 6 ~(\mmod~8)$, it follows that $u \equiv v \equiv \varepsilon \equiv 3 ~(\mmod~4)$.

Consider now the $i$th term in the summation of $\Phi_e(n,w)$ in~(\ref{eq:ball_size}).
If $i$ is odd, then by Lucas' theorem we have that
$$
4 \big| \binom{2u}{i}\binom{2v}{i}.
$$
When $i=2j$ is even, by coefficient extraction we have for $m \geq 3$ that
$$
\binom{2m}{2j} = [x^{2j}] (1+x)^{2m} =[x^{2j}](x^2 +2x+1)^m=[x^{2j}] \bigl( (1+x^2)^m +\sum_{k=0}^{m-1} \binom{m}{k} (1+x^2)^k (2x)^{m-k} \bigr) .
$$
When $0 \leq k \leq m-2$ we have that the coefficient of $x^{2j}$ in $\binom{m}{k} (1+x^2)^k (2x)^{m-k}$
is congruent to $0$ modulo $4$ and when $k=m-1$ this coefficient is $0$. Hence we have
$$
\binom{2m}{2j} = [x^{2j}] \bigl( (1+x^2)^m +\sum_{k=0}^{m-1} \binom{m}{k} (1+x^2)^k (2x)^{m-k} \bigr) \equiv \binom{m}{j} ~(\mmod~4),
$$
which implies that
$$
\binom{2u}{2j}\equiv\binom{u}{j}~(\mmod~4) ~~ \text{and} ~~ \binom{2v}{2j}\equiv\binom{v}{j}~(\mmod~4).
$$
Therefore,
$$
\Phi_e(n,w) \equiv \sum_{i=0}^{e} \binom{w}{i} \binom{\nminusw}{i} \equiv \sum_{i=0}^{2\varepsilon} \binom{2u}{i} \binom{2v}{i} \equiv \sum_{j=0}^{\varepsilon}\binom{u}{j}\binom{v}{j} ~(\mmod~4).
$$
We pair the terms with indices $j=2k$ and $j=2k+1$. Since $(2k+1)^2 \equiv 1~(\mmod~4)$, $u-2k$ is odd, and $v-2k \equiv u-2k~(\mmod~4)$,
it follows that $(u-2k)(v-2k) \equiv 1~(\mmod~4)$ and $\frac{(u-2k)(v-2k)}{(2k+1)^2} \equiv 1~(\mmod~4)$. Hence, the identity
$$
\binom{u}{2k+1}\binom{v}{2k+1}=\binom{u}{2k}\binom{v}{2k}\frac{(u-2k)(v-2k)}{(2k+1)^2}
$$
implies that
$$
\binom{u}{2k+1}\binom{v}{2k+1}\equiv\binom{u}{2k}\binom{v}{2k} ~(\mmod~4).
$$
Moreover, $\binom{u}{2k}=\binom{u}{u-2k}$ and $\binom{v}{2k}=\binom{v}{v-2k}$. Therefore,
\begin{equation}
\label{eq:e_ball_diff}
\Phi_e(n,w) \equiv \sum_{j=0}^{\varepsilon}\binom{u}{j}\binom{v}{j} \equiv 2\sum_{k=0}^{(\varepsilon-1)/2}\binom{u}{2k}\binom{v}{2k} ~(\mmod~4).
\end{equation}
It remains to show that the last sum is even. Since $\varepsilon\equiv 3~(\mmod~4)$, its indices can be partitioned into pairs
$$
\{0,2\},\{4,6\},\ldots,\{\varepsilon-3,\varepsilon-1\}.
$$
By Lucas' theorem modulo $2$, the product $\binom{u}{4m}\binom{v}{4m}$ is odd if and only if $\binom{u}{4m+2}\binom{v}{4m+2}$ is odd, since
the least significant bit of both $u$ and $v$ is $1$. Each pair therefore contributes $0~(\mmod~2)$ to \cref{eq:e_ball_diff},
and hence $4$ divides $\Phi_e(n,w)$ and the contradiction is obtained.

Thus, if $8$ divides $e+2$ there is no $e$-perfect code in \textup{J}$(n,w)$.
\end{proof}

To summarize, we have, by \cref{cor:square-factor-e-plus-one} and \cref{thm:e-plus-2-mod-8}, the following nonexistence result.

\begin{corollary}
\label{cor:3radii_out}
There are no $e$-perfect codes in the Johnson scheme for $e \equiv 3,~6$ or $7~(\mmod~8)$.
\end{corollary}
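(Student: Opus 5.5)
The corollary only combines two earlier results, so the plan is a residue check modulo $8$. I split according to the residue of $e$ and show that each of the three residue classes meets the hypothesis of either \cref{cor:square-factor-e-plus-one} or \cref{thm:e-plus-2-mod-8}.

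If $e \equiv 3~(\mmod~8)$, then $e+1 \equiv 4~(\mmod~8)$. Hence $4 = 2^2$ divides $e+1$, and \cref{cor:square-factor-e-plus-one} with $p=2$ rules out any $e$-perfect code.

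If $e \equiv 7~(\mmod~8)$, then $e+1 \equiv 0~(\mmod~8)$. In particular $2^2$ again divides $e+1$, and the same corollary applies.

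If $e \equiv 6~(\mmod~8)$, then $e+2 \equiv 0~(\mmod~8)$. Here \cref{thm:e-plus-2-mod-8} directly excludes $e$-perfect codes in every J$(n,w)$.

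All three radii in these classes satisfy $e \geq 3$. So the standing assumption $e \geq 2$ needed by \cref{thm:sufficient condition}, on which both earlier results rest, is met.

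There is no real obstacle here; the only care needed is the arithmetic, namely that $3+1=4$ and $7+1=8$ are both divisible by $2^2$, while $6+2=8$.
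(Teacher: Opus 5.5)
Your proof is correct and is the paper's argument: the corollary is obtained directly from \cref{cor:square-factor-e-plus-one}, which covers $e\equiv 3,7\pmod{8}$ since then $4\mid e+1$, and from \cref{thm:e-plus-2-mod-8}, which covers $e\equiv 6\pmod{8}$ since then $8\mid e+2$.
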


\section{Sieve Methods to Exclude Possible Radii}
\label{sec:sieve}

In this section we would like to continue in the same lines of \cref{cor:square-factor-e-plus-one} and \cref{thm:e-plus-2-mod-8}
and to exclude more radii for which there are no perfect codes in the Johnson scheme.
For the congruence arguments, two auxiliary localization lemmas will be used.
The first reduces the congruence of $\Phi_e(n,w)$ modulo $p^2$ to binomial coefficients determined
by the base-$p$ blocks of $w$ and $\nminusw$. The second restricts those blocks by means
of the embedded Steiner systems associated with perfect codes.

\begin{lemma}
\label{lem:block congruence}
Let $p$ be a prime, $P=p^k$, where $k \geq 2$, and $D=p^{k-1}$. If
$$
e=P\epsilon+e_0,\qquad w=P\alpha+b,\qquad \nminusw=P\beta+c,
$$
where $0\le e_0<P$ and $0\le b,c \le e_0$, and
$p^2$ divides $\binom{b+c}{b}$, then
$$
\Phi_e(n,w)\equiv 0\pmod {p^2}.
$$
\end{lemma}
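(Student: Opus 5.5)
The plan is to reduce $\Phi_e(n,w)$ modulo $p^2$ to a product of a "low block" factor and a "high block" factor. The low factor will turn out to be exactly $\binom{b+c}{b}$, which is divisible by $p^2$ by hypothesis.

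\textbf{Splitting the sum.} Write every index $0\le i\le e$ uniquely as $i=Pj+r$ with $0\le r<P$. Since $e=P\epsilon+e_0$, the range of $(j,r)$ is as follows:
\begin{itemize}
\item for $0\le j<\epsilon$, $r$ runs over all of $0,\dots,P-1$;
\item for $j=\epsilon$, $r$ runs over $0,\dots,e_0$.
\end{itemize}
Because $b,c\le e_0$, in both cases the range of $r$ contains $0,\dots,\max(b,c)$.

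\textbf{Target congruence.} I aim to prove that modulo $p^2$
\begin{equation*}
\binom{w}{Pj+r}\binom{\nminusw}{Pj+r}\equiv \binom{\alpha}{j}\binom{b}{r}\binom{\beta}{j}\binom{c}{r}\pmod{p^2}
\end{equation*}
for every $j$ and every $0\le r<P$. Granting this, the right-hand side vanishes unless $r\le\min(b,c)$, so
\begin{equation*}
\Phi_e(n,w)\equiv\sum_{j=0}^{\epsilon}\binom{\alpha}{j}\binom{\beta}{j}\sum_{r}\binom{b}{r}\binom{c}{r}\pmod{p^2},
\end{equation*}
where the inner sum runs over all $r$ for every $j$. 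By Vandermonde, the inner sum is $\binom{b+c}{b}\equiv 0 \pmod{p^2}$, which finishes the proof.

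\textbf{Deriving the target congruence.} I use coefficient extraction, $(1+x)^{w}=(1+x)^{P\alpha}(1+x)^{b}$, which gives
\begin{equation*}
\binom{w}{Pj+r}=\sum_{s}\binom{P\alpha}{s}\binom{b}{Pj+r-s}.
\end{equation*}
Only $s\in[Pj+r-b,\,Pj+r]$ contribute, and since $b<P$ this window meets at most two consecutive $P$-blocks. The terms are handled according to $v_p(s)$, using the standard fact $v_p\binom{P\alpha}{s}\ge k-v_p(s)$ (Kummer's theorem, \cref{thm:Kummer}):
\begin{itemize}
\item If $P\mid s$, then $\binom{P\alpha}{Pj'}\equiv\binom{\alpha}{j'}\pmod{p^2}$ by the Jacobsthal-type congruence $\binom{pa}{pb}\equiv\binom{a}{b}\pmod{p^2}$, iterated $k$ times. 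This yields the main term $\binom{\alpha}{j}\binom{b}{r}$ when $r\le b$. When $r<b$ the window may also reach back to $s=P(j-1)$, but then $Pj+r-s=P+r>b$, so that term vanishes.
\item If $v_p(s)\le k-2$, the term is divisible by $p^2$ and is discarded.
\item The delicate terms are those with $v_p(s)=k-1$, i.e.\ $D\mid s$ but $P\nmid s$. These are divisible only by $p$.
\end{itemize}

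\textbf{Main obstacle.} Controlling the singly-divisible terms with $v_p(s)=k-1$ is where I expect the difficulty. Simply dropping them would need a congruence $\binom{P\alpha+b}{Pj+r}\equiv\binom{\alpha}{j}\binom{b}{r}\pmod{p^2}$, which can fail. So I would argue at the level of the product of the two factors instead of for each factor separately. From the previous step, each factor satisfies
\begin{equation*}
\binom{w}{Pj+r}\equiv\binom{\alpha}{j}\binom{b}{r}+p\,X_{j,r}\pmod{p^2}
\end{equation*}
for some integer $X_{j,r}$, and by Lucas' theorem (\cref{thm:Lucas}) the correction $pX_{j,r}$ can occur only when the low part of the index is "out of range" for $b$. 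The same holds for $\nminusw$ with $c$ and a correction $pY_{j,r}$. Multiplying the two expansions, the cross terms $p X_{j,r}\binom{\beta}{j}\binom{c}{r}$ and $pY_{j,r}\binom{\alpha}{j}\binom{b}{r}$ remain. I would show these cancel or vanish modulo $p^2$ after summing over $r$, using a second Vandermonde-type identity of the shape $\sum_r\binom{b}{r}\binom{c+D'}{r}$ evaluated modulo $p$. This reduces to showing $p\mid\binom{b+c}{b}$, which is implied by the hypothesis.

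If that bookkeeping becomes unwieldy, I would fall back on treating the generating function
\begin{equation*}
\sum_i \binom{w}{i}\binom{\nminusw}{i}=[x^{\nminusw}]\,(1+x)^{w}(1+x)^{\nminusw}
\end{equation*}
truncated to $i\le e$, and apply the $p^2$-Lucas expansion directly to $(1+x)^{P(\alpha+\beta)}(1+x)^{b+c}$.
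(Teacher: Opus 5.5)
\textbf{There is a genuine gap: the cross-term step, which carries the whole lemma, is unproved, and your sketch of it would fail.} Your per-factor analysis is fine and is the paper's. The terms surviving modulo $p^2$ are $s=Pj$ and $s=D(pj+\lambda)$ with $0<|\lambda|<p$. This is exactly the paper's expansion $\binom{w}{Pj+r}\equiv\sum_{|\lambda|<p}\binom{p\alpha}{pj+\lambda}\binom{b}{r-\lambda D}$, obtained from $(1+x)^P\equiv(1+x^D)^p\pmod{p^2}$. Your sketch of the cross terms is wrong on two counts.

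First, the target congruence you state and then grant is false term by term, even at the level of the product and even under the lemma's hypotheses. Take $p=3$, $k=2$, $w=\nminusw=e=14$, so $\alpha=\beta=\epsilon=1$, $b=c=e_0=5$, and $9\mid\binom{10}{5}=252$. At $j=1$, $r=0$ you get $\binom{14}{9}^2=2002^2\equiv 7\pmod 9$, while the right side is $1$. So only block sums can be controlled.

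Second, the Lucas-based claim that $pX_{j,r}$ appears only when $r$ is out of range for $b$ is false. Here $\binom{14}{9}\equiv 4\not\equiv 1=\binom{1}{1}\binom{5}{0}\pmod 9$ although $r=0\le b$. The discrepancy comes from $s=6$, i.e.\ $\lambda=-1$, namely $\binom{9}{6}\binom{5}{3}=840\equiv 3\pmod 9$. In general every $0<|\lambda|<p$ with $0\le r-\lambda D\le b$ contributes, including $r\le b-D$.

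What must actually be shown is the following. Summing the $X$-cross term over a block with the shifted Vandermonde identity gives
\[
\sum_r pX_{j,r}\binom{c}{r}\equiv\sum_{0<|\lambda|<p}\binom{P\alpha}{D(pj+\lambda)}\binom{b+c}{b+\lambda D}\pmod{p^2}.
\]
The cutoff $r\le e_0$ in the last block is harmless, since $\binom{c}{r}=0$ for $r>c$. Each outer coefficient carries only one guaranteed factor $p$; for instance $\nu_3\binom{9}{6}=1$. So you need $p\mid\binom{b+c}{b+\lambda D}$ for all $0<|\lambda|<p$, and symmetrically $p\mid\binom{b+c}{b-\mu D}$ for the $Y$-terms.

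This does not follow from $p\mid\binom{b+c}{b}$, contrary to your reduction. For $p=3$, $k=2$, $b=6$, $c=3$ one has $3\mid\binom{9}{6}$, but $\binom{9}{6-2\cdot 3}=1$.

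The missing idea is to use the full hypothesis $p^2\mid\binom{b+c}{b}$ together with $b,c<p^k$:
\begin{itemize}
\item Carries in $b+c$ occur only at positions $0,\dots,k-1$, so two carries force one within the lowest $k-1$ digits.
\item Those digits are unchanged when $(b,c)$ is replaced by $(b+\lambda D,\,c-\lambda D)$, so that carry persists.
\item Kummer's theorem then gives $p\mid\binom{b+c}{b+\lambda D}$.
\end{itemize}
This is precisely the step in the paper's proof.

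Your fallback through the truncated generating function does not work as stated either. The cutoff $i\le e$ destroys the product structure $(1+x)^{w}(1+x)^{\nminusw}$ on which the $p^2$-Lucas expansion would act.
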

\begin{proof}
Since $(1+x)^D\equiv 1+x^D (\mmod~{p})$, we can write $(1+x)^D=1+x^D+pQ(x)$. Then
$$
(1+x)^P  =\bigl((1+x)^D\bigr)^p= (1+x^D+pQ(x))^p \equiv (1+x^D)^p~(\mmod~{p^2}).
$$
Hence, we have
\begin{equation}
\label{eq:w_blocks}
(1+x)^w\equiv (1+x^D)^{p\alpha} (1+x)^b ~(\mmod~{p^2}),
\end{equation}
and similarly
\begin{equation}
\label{eq:n-w_blocks}
(1+x)^{\nminusw}\equiv (1+x^D)^{p\beta} (1+x)^c ~(\mmod~{p^2}).
\end{equation}

Recall in \cref{eq:ball_size} for a size of a ball $\Phi_e(n,w)$, we have
\begin{equation*}
\Phi_{e}(n,w) = \sum_{i=0}^e \binom{w}{i} \binom{\nminusw}{i} .
\end{equation*}
Consider a partition of the index set $[0,e]=[0,P\epsilon+e_0]$ in the summation of $\Phi_e(n,w)$ into $\epsilon+1$ consecutive
blocks: $\{Pm+r ~:~ 0\leq r\leq P-1\}$ with $0 \leq m <\epsilon$, and $\{Pm+r ~:~ 0\leq r\leq e_0\}$ with $m=\epsilon$.

Coefficient extraction modulo $p^2$ from the polynomial of \cref{eq:w_blocks} gives
\begin{align*}
\binom{w}{Pm+r}&=[x^{Pm+r}]\ (1+x)^w \equiv [x^{Pm+r}] (1+x^D)^{p\alpha}  (1+x)^b\\
&\equiv \sum_{\abs{\lambda} <p}[x^{D(pm+\lambda)}](1+x^D)^{p\alpha} \cdot [x^{r-\lambda D}] (1+x)^b\\
\end{align*}
\vspace{-1.5cm}
\begin{align}
\hspace{0.4cm} &\equiv \sum_{\abs{\lambda} <p} \binom{p\alpha}{pm+\lambda} \binom{b}{r-\lambda D} ~(\mmod ~ p^2).
\label{eq:w_choose}
\end{align}
where most of the values for $\lambda$ do not contribute to the sum when $r-\lambda D$ is outside the range between $0$ and $b$.
From the polynomial of \cref{eq:n-w_blocks}, there is also an analogous expression for $\binom{\nminusw}{Pm+r}$,
\begin{equation}
\label{eq:n-w_choose}
\binom{\nminusw}{Pm+r}= \sum_{\abs{\mu} <p} \binom{p\beta}{pm+\mu} \binom{c}{r-\mu D} ~(\mmod ~ p^2).
\end{equation}

After multiplying the two expansions in \cref{eq:w_choose} and \cref{eq:n-w_choose} and summing over $r$ in the $m$-th block
of $\Phi_e(n,w)$, we have modulo $p^2$ that
\begin{align}
\label{eq:w_n-w}
\sum_r\binom{w}{Pm+r}\binom{\nminusw}{Pm+r}
&\equiv \sum_{\abs{\lambda},\abs{\mu} <p} \binom{p\alpha}{pm+\lambda} \binom{p\beta}{pm+\mu} \sum_r \binom{b}{r-\lambda D} \binom{c}{r-\mu D}
%&\equiv \sum_{\abs{\lambda},\abs{\mu} <p} \binom{pu}{pm+\lambda} \binom{pv}{pm+\mu} \binom{b+c}{b+(\lambda-\mu)D} ~(\mmod ~ p^2),
\end{align}

If $\lambda\neq 0$, Lucas' theorem yields
\begin{align}
\label{eq-p divides outer coefficient_lambda}
p \big| \binom{p\alpha}{pm+\lambda}.
\end{align}
(this is due to \cref{thm:Lucas}, i.e., the coefficient of $p^0$ in $p \alpha$ is $0$, while this coefficient
in $pm+\lambda$ is $\lambda$).
Similarly we have that if $\mu\neq 0$, Lucas' theorem yields
\begin{align}
\label{eq-p divides outer coefficient_mu}
p \big| \binom{p\beta}{pm+\mu}.
\end{align}

When both $\lambda$ and $\mu$ are nonzero, then by \cref{eq-p divides outer coefficient_lambda,eq-p divides outer coefficient_mu}
the two outer coefficients already give two factors of $p$ and the sum in \cref{eq:w_n-w} vanishes modulo $p^2$.
Hence, we have to consider only cases when at least one of $\lambda$ and $\mu$ is zero.
When $\lambda =0$ we have $\binom{b}{r}$ in \cref{eq:w_n-w} and when $\mu=0$ we have $\binom{c}{r}$ in \cref{eq:w_n-w}.
Since $0 \leq b,c \leq e_0 <P$ and $0\le r <P$, it follows that summing over $r$ makes these sums Vandermonde sums.
The claim implies that for each block of indices we have that \cref{eq:w_n-w} equals to
$$
\sum_{\lambda, \mu}\binom{p\alpha}{pm+\lambda}\binom{p\beta}{pm+\mu}\binom{b+c}{b+(\lambda-\mu)D} \pmod{p^2}, ~~\lambda=0 \text{ or } \mu=0.
$$
If $\lambda=\mu=0$, the condition $p^2| \binom{b+c}{b}$ gives a factor $p^2$. If exactly one of
$\lambda,\mu$ is nonzero, we claim that $p$ divides $\binom{b+c}{b+(\lambda-\mu)D}$ (since $b,c <p^k$, $D=p^{k-1}$, and hence
a carry of $b+c$ from the $k-1$ least significant digits cannot vanish)
and it gives one factor $p$, and either
\cref{eq-p divides outer coefficient_lambda} or \cref{eq-p divides outer coefficient_mu}
gives another factor $p$. Hence every block contributes $0$ modulo $p^2$.
Thus, $\Phi_e(n,w)\equiv 0\pmod {p^2}$.
\end{proof}

Let $\nu_p (\mu)$ denote the $p$-adic valuation of $\mu$, i.e., the largest power of the prime $p$ that divides the integer $\mu$.

\begin{lemma}
\label{lem:residue-localization}
Let $p$ be a prime, $P=p^k$, where $k\ge 1$, $b \equiv w~(\mmod~P)$, $c \equiv \nminusw~(\mmod~P)$, such that $0 \leq b,c <P$, and suppose that
$$
e\equiv -\gamma \pmod P \qquad \text{and} \qquad 1\le \gamma\le \frac{P+1}{2}.
$$
If there exists an $e$-perfect code in \textup{J}$(n,w)$, then $P- 2\gamma+1\le b,c\le P-\gamma$.
\end{lemma}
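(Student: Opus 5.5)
The plan is to use the embedded Steiner systems of \cref{prop:embedded_Steiner} together with the necessary divisibility conditions of \cref{prop:necessary_Steiner}. The key observation is that a single well-chosen condition, read $p$-adically, says that a certain window of $\gamma$ consecutive integers must contain a multiple of $P$. By \cref{prop:embedded_Steiner} there is a Steiner system \textup{S}$(e+1,2e+1,w)$. By \cref{prop:necessary_Steiner}, with $i=e+1-s$, the number
$$
\binom{w-e-1+s}{s}\Big/\binom{e+s}{s}=\prod_{j=1}^{s}\frac{w-e-1+j}{e+j}
$$
is an integer for every $0\le s\le e+1$. I will take $s=\gamma$. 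This is admissible: $e\ge 0$ and $e\equiv -\gamma \pmod P$ give $e\ge P-\gamma\ge \gamma-1$, using $\gamma\le (P+1)/2$. Hence $\gamma\le e+1$.

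With $s=\gamma$ the condition becomes
$$
\nu_p\Bigl(\prod_{j=1}^{\gamma}(w-e-1+j)\Bigr)\ \ge\ \nu_p\Bigl(\prod_{j=1}^{\gamma}(e+j)\Bigr).
$$
Both sides are $p$-adic valuations of products of $\gamma$ consecutive integers. For such a window $I$ one has $\nu_p(\prod_{x\in I}x)=\sum_{l\ge 1} N_l(I)$, where $N_l(I)$ is the number of multiples of $p^l$ in $I$. It is elementary that $N_l(I)\le \lceil \gamma/p^l\rceil$, with equality whenever the right endpoint of $I$ is a multiple of $p^l$. The denominator window is $\{e+1,\dots,e+\gamma\}$, and since $e+\gamma\equiv 0\pmod P$ its right endpoint is divisible by $p^l$ for every $l\le k$. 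So it attains the maximum $\lceil\gamma/p^l\rceil$ at every level $l\le k$, and in particular $N_k\ge 1$ there.

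Now suppose the numerator window $\{w-e,\dots,w-e-1+\gamma\}$ contains no multiple of $P$. Then its contributions at all levels $l\ge k$ vanish. At levels $l<k$ it contributes at most $\lceil\gamma/p^l\rceil$, which is exactly the denominator's contribution. The denominator also has at least $1$ at level $k$, so the numerator valuation is strictly smaller, contradicting integrality. Hence the numerator window contains a multiple of $P$.

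Reducing modulo $P$, we have $w-e\equiv b+\gamma$, so the window is $b+\gamma,\dots,b+2\gamma-1$ modulo $P$. Since $0\le b<P$ and $2\gamma-1\le P$, this window contains a multiple of $P$ exactly when $b+\gamma\le P\le b+2\gamma-1$, that is, $P-2\gamma+1\le b\le P-\gamma$. The same argument applied to the Steiner system \textup{S}$(e+1,2e+1,\nminusw)$, also supplied by \cref{prop:embedded_Steiner}, gives the bound for $c$.

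The step needing the most care is the comparison of valuations level by level. One must check that higher levels $l>k$ cannot rescue the numerator. This holds because, if the numerator window has no multiple of $P$, it has none of $p^l$ for $l\ge k$ either, so those levels contribute nothing to the numerator while only adding to the denominator. One must also check that the choice $s=\gamma$ lies in the range $0\le s\le e+1$ allowed by \cref{prop:necessary_Steiner}, which was verified above.
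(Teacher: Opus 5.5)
Your proof is correct and follows the paper's route. It uses the same Steiner system $\textup{S}(e+1,2e+1,w)$ and the same divisibility $\binom{e+\gamma}{\gamma}\mid\binom{w-e+\gamma-1}{\gamma}$, reaches the same intermediate conclusion that the window $w-e,\dots,w-e+\gamma-1$ must contain a multiple of $P$ (the paper's ``$x=0$ or $x\ge P-\gamma+1$''), and finishes with the same reduction modulo $P$. The only difference is the valuation comparison: the paper counts carries via Kummer's theorem with $m=\nu_p(\gamma)$, while you count multiples of $p^l$ level by level in the two windows of consecutive integers, which is equivalent and somewhat more transparent.
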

\begin{proof}
Let $\cC$ be an $e$-perfect code in J$(n,w)$. By \cref{prop:embedded_Steiner} there
exists a Steiner system S$(e+1,2e+1,w)$ and a Steiner system S$(e+1,2e+1,\nminusw)$.
Since $1\le \gamma\le \frac{P+1}{2}$ we have that $-\gamma$ is restricted to the range between $\frac{P-1}{2}$ and $P-1$ modulo $P$
and hence $e$ is congruent to the range between $\frac{P-1}{2}$ and $P-1$ modulo $P$ which implies that $e+1 \geq \gamma$.
By \cref{prop:necessary_Steiner} this implies that
\begin{equation}
\label{eq:binomial_necc_embed}
\binom{e+\gamma}{\gamma}\bigg| \binom{w-e+\gamma-1}{\gamma}.
\end{equation}
If $m=\nu_p(\gamma)$, then since $e+\gamma \equiv 0~(\mmod~ {p^k})$, i.e., $\nu_p (e+\gamma) \geq k$,
it follows by Kummer's theorem that
$$
\nu_p\bigl(\binom{e+\gamma}{\gamma}\bigr)\geq k-m.
$$
Therefore, by \cref{eq:binomial_necc_embed} we have that
\begin{align}
\label{eq:nu_p ge k-s}
\nu_p\bigl(\binom{w-e+\gamma-1}{\gamma} \bigr) \geq \nu_p\bigl(\binom{e+\gamma}{\gamma}\bigr) \geq k-m.
\end{align}

We claim that for any $0\leq x<P$, if $x\equiv w-e~(\mmod~P)$, we have
\begin{equation}
\label{eq:values_x}
x=0\quad\text{or}\quad x\geq P-\gamma+1.
\end{equation}
Since $x\equiv w-e~(\mmod~P)$, it follows that $w-e+\gamma-1 \equiv x+\gamma-1~(\mmod~P)$ and hence
if $1 \leq x \leq P-\gamma$, then $x+\gamma -1 <P$ and the least significant $k$ digits in base-$p$ representing $w-e+\gamma-1$
form the base-$p$ representation of $x+\gamma -1$.
This implies that when $\gamma$ is added to $w-e-1$
there is also no carry at the place representing
$p^{k-1}$ since such a carry implies that $x+\gamma-1$ is greater or equal~$P$, a contradiction.
Hence, there is also no carry in places representing $p^i$ for $i \geq k$.
Since $m=\nu_p(\gamma)$, it follows that carries are possible
in places representing $p^j$, $j \geq m$, and hence there are at most $k-1-m$ carries, contradicting \cref{eq:nu_p ge k-s},
and the claim of \cref{eq:values_x} is proved.

Since $w\equiv e+x\equiv x-\gamma ~(\mmod~P)$, we obtain
$$
b \equiv w~(\mmod~P)\equiv~x-\gamma~(\mmod~P), ~~~ b \in R_{\gamma}=\{P-2\gamma+1,\ldots,P-\gamma\}.
$$
(note that the $b=P-\gamma$ occurs when $x=0$).
The same argument can be applied to $\nminusw$ to obtain $c\equiv \nminusw~(\mmod~P)$ and $c \in R_{\gamma}$.
\end{proof}

\begin{theorem}
\label{thm:p-adic-interval-criterion}
Let $p$ be a prime, let $p^{r-1}\le e < p^r$, $\gamma=p^r-e$, and for $2\le k\le r$, let $\gamma_k$ be the least nonnegative residue
of $\gamma$ modulo $p^k$. If
$$
1\le\gamma_k\le \left\lfloor\frac{p^{k-1}+2}{4}\right\rfloor
$$
for some $k$, then there are no $e$-perfect code in \textup{J}$(n,w)$.
\end{theorem}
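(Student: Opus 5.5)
The plan is to combine \cref{lem:residue-localization} and \cref{lem:block congruence} to show that $p^2$ divides $\Phi_e(n,w)$, and then to contradict \cref{thm:sufficient condition}. Assume an $e$-perfect code exists in J$(n,w)$, and fix $k$, $2\le k\le r$, with $1\le\gamma_k\le\lfloor (p^{k-1}+2)/4\rfloor$. Set $P=p^k$ and $D=p^{k-1}$. Note that $e\ge p^{r-1}\ge p\ge 2$, so \cref{thm:sufficient condition} applies. Since $\gamma=p^r-e$, we have $e\equiv-\gamma_k \pmod P$. Moreover $\gamma_k\le (D+2)/4\le (P+1)/2$, so \cref{lem:residue-localization} with $\gamma=\gamma_k$ applies. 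It gives that the residues $b\equiv w$ and $c\equiv\nminusw \pmod P$ satisfy
$$
P-2\gamma_k+1\le b,c\le P-\gamma_k .
$$

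Next, write $e=P\epsilon+e_0$ with $e_0=P-\gamma_k$; this is the least residue of $e$ modulo $P$ because $1\le\gamma_k\le P$. Then $b,c\le e_0$, so the hypotheses of \cref{lem:block congruence} hold, except for the condition that $p^2$ divides $\binom{b+c}{b}$. Once that condition is verified, the lemma yields $\Phi_e(n,w)\equiv 0 \pmod{p^2}$, and \cref{thm:sufficient condition} gives the contradiction.

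The main step is to show $\nu_p\binom{b+c}{b}\ge 2$ using Kummer's theorem (\cref{thm:Kummer}). From $4\gamma_k\le D+2$ we get $2\gamma_k-1\le D/2$. Hence $b,c\ge P-D/2>P-D=(p-1)D$, so the digit of both $b$ and $c$ in position $k-1$ is $p-1$.

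Now write $b=(p-1)D+b'$ and $c=(p-1)D+c'$ with $0\le b',c'<D$. Then
$$
b'+c'\ge 2(D-2\gamma_k+1)\ge 2D-(D+2)+2=D .
$$
Adding $b'$ and $c'$ (the low $k-1$ digits, which is meaningful since $k\ge2$) therefore produces a carry into position $k-1$. This accounts for at least one carry among positions $0,\ldots,k-2$.

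At position $k-1$ the column sum is then $(p-1)+(p-1)+1\ge p$, which produces a second carry. So there are at least two carries, and $p^2\mid\binom{b+c}{b}$.

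The only delicate point is this digit bookkeeping: confirming that the floor bound on $\gamma_k$ is exactly strong enough both to force the top digits to be $p-1$ and to force a carry out of the low block. The inequalities above show that it is.
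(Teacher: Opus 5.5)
Your proposal is correct and follows the paper's proof essentially step for step. You localize $b,c$ via \cref{lem:residue-localization} with $\gamma=\gamma_k$, show that both have top digit $p-1$ and low parts summing to at least $D$, obtain two carries via Kummer's theorem, and conclude with \cref{lem:block congruence} and \cref{thm:sufficient condition}; you also make explicit the check $e\ge p^{r-1}\ge p\ge 2$ needed for \cref{thm:sufficient condition}, which the paper leaves implicit.
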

\begin{proof}
Assume for contradiction that there exists an $e$-perfect code in \textup{J}$(n,w)$. Let $P=p^k$, $D=p^{k-1}$ and
$$
e=P\epsilon+P-\gamma_k=P+e_0, \qquad w=P\alpha+b,\qquad \nminusw=P\beta+c,
$$
where $0\le b,c <P$. Since $e+\gamma=p^r$ and $\gamma_k\equiv \gamma~(\mmod~P)$, we have that $e+\gamma_k\equiv0~(\mmod~P)$.
By the assumed bound of $\gamma_k$ and \cref{lem:residue-localization}, we have
\begin{equation}
\label{eq:basic_bound_b_c}
P-2\gamma_k+1\le b,c\le P-\gamma_k,
\end{equation}
%Thus the final partial \(P\)-block of \(\Phi_e(n,w)\), namely $\sum_{s=0}^{P-\gamma_k}\binom{w}{PL+s}\binom{n-w}{PL+s}$,
% whose endpoint index is \(P-\gamma_k\), covers both residues,
where the second inequality satisfies the condition of \cref{lem:block congruence}, i.e., ${b,c \leq P-\gamma_k =e_0}$.

By the assumed bound $\gamma_k\le(D+2)/4$ we have
$$
D-2\gamma_k+1\ge D-\frac{D+2}{2}+1= \frac{D}{2}\ge 0.
$$
Therefore, since $0 \leq b <P=pD$ and \cref{eq:basic_bound_b_c}, we have
$$
b=(p-1)D+\rho_b,\qquad c=(p-1)D+\rho_c,
$$
where $D-2\gamma_k+1\le\rho_b,\rho_c\le D-\gamma_k$ and hence $0\le \rho_b,\rho_c< D$. Therefore, $b,c$ have the same base-$p$ digit $p-1$ in the
$p^{k-1}$-place, and $\rho_b,\rho_c$ are their least nonnegative residues modulo~$D$.
Since $\rho_b+\rho_c\ge 2(D-2\gamma_k+1)\ge D$, in the base-$p$ addition of $b$ and $c$, the $k-1$ least significant digits
produce a carry into the $p^{k-1}$-place, after which $(p-1)D+(p-1)D+1\ge P=p^k$ produces a second carry into the $p^k$-place. Kummer's theorem yields
$$
p^2\mid\binom{b+c}{b}
$$
and now by \cref{lem:block congruence} we have that $\Phi_e(n,w)\equiv0\pmod{p^2}$, contradicting \cref{thm:sufficient condition}.
\end{proof}

\begin{theorem}
\label{thm:firstSieve}
If $e\equiv-(p \ell+\kappa)~(\mmod~{p^2})$, where $0\le \ell \le\left\lfloor\frac{p-1}{4}\right\rfloor$,
when $p>5$ we have $1\le \kappa\le \left\lfloor\frac{p+2}{4}\right\rfloor$, and when $p=5$ we have $1\le \kappa \le 2$,
then there is no $e$-perfect codes in \textup{J}$(n,w)$.
\end{theorem}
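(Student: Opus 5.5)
The plan is to reduce the theorem to \cref{lem:block congruence} with $k=2$, so $P=p^2$ and $D=p$. The key is to show that $p^2\mid\binom{b+c}{b}$, where $b,c$ are the residues of $w,\nminusw$ modulo $p^2$. Then \cref{lem:block congruence} gives $\Phi_e(n,w)\equiv 0 \pmod{p^2}$, which contradicts \cref{thm:sufficient condition}. By Kummer's theorem, it suffices to exhibit two carries when adding $b$ and $c$ in base $p$: one from the units digit and one from the $p$-digit.

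\textbf{Setup.} Put $\gamma=p\ell+\kappa$. From the bounds on $\ell$ and $\kappa$ we get $1\le\gamma\le p(p-1)/4+(p+2)/4\le (p^2+1)/2$. Hence \cref{lem:residue-localization} applies with $P=p^2$ and yields
$$p^2-2\gamma+1\le b,c\le p^2-\gamma .$$
Writing $e=p^2\epsilon+(p^2-\gamma)$, we have $e_0=p^2-\gamma$ and $b,c\le e_0$, as \cref{lem:block congruence} requires.

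\textbf{Units digit.} Since $e\equiv-\kappa\pmod p$ and $1\le\kappa\le(p+1)/2$, I apply \cref{lem:residue-localization} a second time, now with $k=1$ and $P=p$. This gives $p-2\kappa+1\le b_0,c_0\le p-\kappa$ for the units digits $b_0,c_0$. Hence $b_0+c_0\ge 2p-4\kappa+2\ge p$ whenever $\kappa\le (p+2)/4$, and the first carry follows.

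\textbf{The digit $p$.} Write $b=b_1p+b_0$ and $c=c_1p+c_0$. The lower bound $b\ge p^2-2p\ell-2\kappa+1$, together with $b_0\le p-1$ and $2\kappa\le p$, forces $b_1\ge p-2\ell-1$; the same holds for $c_1$. Including the incoming carry, the $p$-digit sum is $b_1+c_1+1\ge 2p-4\ell-1$. This is at least $p$ exactly when $\ell\le (p-1)/4$, which is our hypothesis. So there is a second carry, and Kummer gives $p^2\mid\binom{b+c}{b}$.

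\textbf{Main obstacle: $p=5$, $\kappa=2$.} Here $\lfloor (p+2)/4\rfloor=1<\kappa$, so the bound $b_0,c_0\in\{2,3\}$ alone does not force a units carry. For example, $b=c=22$ (with $\ell=0$) gives only one carry. To close this case I will use \cref{eq-congruence-e+2}. Since $e\equiv -2\pmod 5$, we have $5\mid e+2$, and $5$ is odd, so $5\mid w+2$ and $5\mid \nminusw+2$. Hence $b_0=c_0=3$, giving $b_0+c_0=6\ge 5$ and the first carry. The $p$-digit argument above still applies, because with $\ell\le1$ we have $2p-4\ell-1\ge 5$.

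The same trick, $b_0=c_0=p-2$ whenever $\kappa=2$, covers $\kappa=2$ for any odd prime, so nothing further is special.

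I expect the main care to go into the digit bookkeeping for $b_1$. I must use the exact lower bound from the $P=p^2$ localization, rather than the units-digit information, so that the estimate $b_1\ge p-2\ell-1$ is correct at the boundary values of $\ell$ and $\kappa$.
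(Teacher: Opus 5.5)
Your proposal is correct and follows the paper's proof essentially step for step. Both set $\gamma=p\ell+\kappa$ and apply residue localization at $P=p^2$ and again at $P=p$ to get the digit bounds. Both use $2(w+2)\equiv 2(\nminusw+2)\equiv 0 \pmod{e+2}$ to force $b_0=c_0=3$ in the exceptional case $(p,\kappa)=(5,2)$, then $b_1,c_1\ge p-2\ell-1$ for the second carry, and finish with Kummer, \cref{lem:block congruence} and \cref{thm:sufficient condition}.

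There are two harmless slips. Your intermediate bound $\gamma\le p(p-1)/4+(p+2)/4$ fails for $(p,\kappa,\ell)=(5,2,1)$, although $\gamma=7\le 13$ still gives what you need. Your closing remark that the trick handles $\kappa=2$ for every odd prime is false at $p=3$, where $b_0+c_0=2<3$; this does not matter here, since the theorem concerns only $p\ge 5$.
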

\begin{proof}
Assume that an $e$-perfect code exists in J$(n,w)$ and let $b,c$ as defined in \cref{lem:block congruence} and \cref{lem:residue-localization} for $P=p^2$.
We verify the hypotheses of \cref{lem:block congruence,lem:residue-localization} with
$\gamma=p \ell+\kappa$ and \(P=p^2\). Then $e\equiv -\gamma ~(\mmod~P)$ and $\gamma=p\ell+\kappa\le \frac{P}{2}$ by the assumed bounds of $\ell$
and $\kappa$. By \cref{lem:residue-localization} we have
$$
P-2\gamma+1\le b,c \le P-\gamma.
$$
Let $b_0$ and $c_0$, respectively, be the least significant digits  of $b$ and $c$, respectively, in base $p$.
Applying \cref{lem:residue-localization} again with $P=p$, $e\equiv -\kappa~(\mmod~p)$ and $\kappa\le p/2$ implies that
$$
p-2\kappa+1\le b_0, c_0\le p-\kappa,
$$
since $4\kappa\le p+2$ and $b_0+c_0\ge 2p-4 \kappa+2\ge p$ and hence $b_0+c_0\ge p$ when $p \geq 5$.

For the exceptional pair $(p,\kappa)=(5,2)$, we have $e\equiv -2~(\mmod~5)$
and \cref{eq-congruence-e+2} implies that $w\equiv \nminusw \equiv -2\pmod5$, the exact residues $b_0=c_0=3$, and thus $b_0+c_0>5$.
Hence, the least significant digits of $b$ and $c$ produce a carry in the addition $b+c$.

Next, we claim that the $p^1$-digits of $b$ and $c$ produce a second carry in the addition $b+c$.
Write $b=b_1 p+b_0$ and $c=c_1 p+c_0$. From the two interval bounds of $b,c$ and $b_0,c_0$, we have
$$
b_1 p=b-b_0\ge (p^2-2\gamma+1)-(p-\kappa)= p^2-2p\ell-p-\kappa+1>p(p-2\ell-2),
$$
and similarly we have the same lower bound for $c_1 p$. Therefore, $b_1,c_1\ge p-2\ell-1$, and including the carry from the least significant digit
and since $4 \ell \leq p-1$, we have
$$
b_1+c_1+1 \ge 2p-4\ell-1\ge p .
$$
Hence, in the addition of $b$ and $c$, there is a carry in two digits,
so Kummer's theorem implies that $p^2$ divides $\binom{b+c}{c}$. Thus \cref{lem:block congruence} yields that $p^2$ divides $\Phi_e(n,w)$,
contradicting \cref{thm:sufficient condition}.
\end{proof}

\begin{theorem}
\label{thm:secondSieve}
If $e \equiv -(2\ell +1) ~(\mmod~2^k)$, where $0 \leq \ell \leq 2^{k-3}$ and $k \geq 3$, then there is no $e$-perfect code in \textup{J}$(n,w)$.
\end{theorem}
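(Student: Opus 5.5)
Assume an $e$-perfect code exists in J$(n,w)$, and aim for a contradiction with \cref{thm:sufficient condition} by showing $4 \mid \Phi_e(n,w)$ via \cref{lem:block congruence} with $p=2$. Take $P=2^k$, $D=2^{k-1}$ and $\gamma=2\ell+1$. Then $e\equiv-\gamma~(\mmod~P)$, and since $\ell\le 2^{k-3}$ we have $1\le\gamma\le 2^{k-2}+1\le\frac{P+1}{2}$. \cref{lem:residue-localization} then applies, and the residues $b\equiv w$, $c\equiv\nminusw~(\mmod~P)$ satisfy
$$
P-4\ell-1\le b,c\le P-2\ell-1 .
$$
Write $e=P\epsilon+e_0$ with $e_0=P-\gamma=P-2\ell-1$. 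The upper bound gives $b,c\le e_0$, which is the hypothesis of \cref{lem:block congruence}. It remains to verify that $4\mid\binom{b+c}{b}$.

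By Kummer's theorem it suffices to find two carries in the binary addition $b+c$. The lower bound gives $b,c\ge P-4\ell-1\ge P-2^{k-1}-1=D-1$. I would split off the top bit, aiming for $b=D+\rho_b$ and $c=D+\rho_c$ with $0\le\rho_b,\rho_c<D$. This needs $b,c\ge D$. The only problem case is $\ell=2^{k-3}$ with $b$ or $c$ equal to $D-1$.

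For that edge case I would use parity. Since $e$ is even, \cref{eq-congruence-e+2} only gives $w\equiv -2~(\mmod~(e+2)/2)$, which is not enough on its own. Instead, refine \cref{lem:residue-localization}: apply it with $P=2$ and $\gamma=1$, using $e\equiv -1~(\mmod~2)$. This forces $b_0=c_0=1$, so $b$ and $c$ are odd, and the least-significant digits already carry. Oddness does not rule out $D-1$, however, so that value still has to be handled separately.

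\textbf{Second carry.} Assume first $b,c\ge D$. Then both top digits at the $2^{k-1}$ place are $1$, so that place produces a carry into $2^k$. The low-order carry at the least significant bit is a second carry, at a different position, so Kummer's theorem gives $4\mid\binom{b+c}{b}$, and \cref{lem:block congruence} gives $4\mid\Phi_e(n,w)$. (Alternatively, a carry out of the low $k-1$ bits would also add a carry at the top, since $1+1+1\ge 2$.)

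\textbf{Main obstacle.} I expect the difficulty to be the boundary case $\ell=2^{k-3}$ where $b$ or $c$ equals $D-1=2^{k-1}-1$, which has all low bits $1$ and top bit $0$. If $b=D-1$ and $c\ge D$ is odd, the addition of $b$ and $c$ carries at bit $0$ and then propagates all the way up. So $b+c$ still produces at least two carries whenever $k\ge2$, which holds here since $k\ge3$. If $b=c=D-1$, there are $k-1\ge2$ carries. So in every case there are at least two carries. I would therefore phrase the argument as: $b,c$ odd and $b+c\ge P-1+\ldots$. Since $b+c\ge 2P-8\ell-2\ge P-2$, with both odd and $b+c\ne P-1$ (the sum is even), this forces $b+c\ge P$. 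The addition of two odd numbers whose sum is at least $P=2^k$, with $k\ge 2$, yields a carry at bit $0$ and a carry into bit $k$. These are distinct, giving the two carries uniformly without case analysis.
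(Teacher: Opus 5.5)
Your route is the paper's: \cref{lem:residue-localization} with $P=2^k$ and $\gamma=2\ell+1$ gives $P-4\ell-1\le b,c\le P-2\ell-1$, hence $b,c\le e_0$. Then you use oddness of $b$ and $c$, find two carries in $b+c$, and finish with Kummer and \cref{lem:block congruence}. Your parity step, \cref{lem:residue-localization} with $P=2$ and $\gamma=1$, is valid. For $\gamma=1$ the lemma's proof reduces to $e+1\mid w-e$ from the embedded \textup{S}$(e+1,2e+1,w)$, which is exactly the paper's parity argument.

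The remark ``since $e$ is even'' is a slip: the hypothesis makes $e$ odd. You use $e\equiv -1~(\mmod~2)$ immediately afterwards, so nothing is lost. For odd $e$, \cref{eq-congruence-e+2} is a congruence modulo the odd number $e+2$ and gives no parity information anyway. Your case analysis in the ``Main obstacle'' paragraph is correct and matches the paper's proof. It covers three cases: $b,c\ge D$; exactly one of them equal to $D-1$; and $b=c=D-1$.

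The ``uniform'' rephrasing you propose at the end is wrong, though, and must not replace that case analysis. From $b+c\ge P-2$ and $b+c$ even you cannot conclude $b+c\ge P$, because $P-2$ is itself even. Take $\ell=2^{k-3}$ and $b=c=D-1=2^{k-1}-1$, which the interval from \cref{lem:residue-localization} allows. Then $b+c=P-2<P$ and there is no carry into bit $k$. The required two carries occur at bits $0$ and $1$ (in fact at all bits $0,\dots,k-2$), because $D-1$ has its $k-1\ge 2$ low bits equal to $1$. So the uniform argument fails exactly at this boundary configuration. Keep the explicit case split, as the paper does, and the proof is complete.
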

\begin{proof}
Assume for the contrary that there exists an $e$-perfect code in J$(n,w)$.
If $P=2^k$, $e \equiv -\gamma ~(\mmod ~P)$, and $\gamma =2\ell+1$, where $0 \leq \ell \leq 2^{k-3}$, then
we have that $1 \leq \gamma \leq (P+1)/2$ and the conditions of \cref{lem:residue-localization} are satisfied. Hence,
$P-2\gamma +1 \leq b,c \leq P-\gamma$, i.e.,
$$
P -4\ell -1 \leq b,c \leq P-2\ell-1.
$$
By \cref{prop:embedded_Steiner} there exist Steiner systems S$(e+1,2e+1,w)$ and S$(e+1,2e+1,\nminusw)$ which by \cref{prop:necessary_Steiner}
imply that $e+1$ divides $w-e$ and $\nminusw-e$. Now, $e \equiv -(2\ell+1) ~(\mmod~2^k)$ implies that
$e$ is odd and $e+1$ is even. Therefore, $w-e$ is even and $w$ is odd.
Since $b \equiv w~(\mmod~P)$, it follows that $b$ is odd and by similar arguments $c$ is also odd. Therefore,
$$
b=P-4\ell-1+2\ell_0,\qquad c=P-4\ell-1+2\ell'_0,\qquad0\le \ell_0,\ell'_0\le \ell.
$$
Since $0 \leq \ell \leq 2^{k-3}$, it follows that $2^{k-1}-1 \leq b,c$. Both $b$ and $c$ are odd and hence they have a $1$
in their least significant bit in their binary representation which implies one carry in their addition. If one of them
equals $2^{k-1}-1$, then its $k-1 \geq 2$ least significant bits are $1$'s and addition with any odd number produces at least
two carries. If $2^{k-1} \leq b,c < 2^k =P$ then both $b$ and $c$ have a $1$ in the $2^{k-1}$-place and there are at least two carries when
$b$ and $c$ are added. Therefore, by Kummer's theorem we have that $4$ divides $\binom{b+c}{b}$
and therefore the conditions of \cref{lem:block congruence} are satisfied. Hence, by \cref{lem:block congruence} we have that $4$ divides $\Phi_e(n,w)$
contradicting \cref{thm:sufficient condition}, and thus there is no $e$-perfect code in J$(n,w)$.
\end{proof}

\begin{theorem}
\label{thm:thirdSieve}
If $e \equiv -2 ~ \text{or} -7 ~(\mmod~3^k)$ and $e>7$, where $k \geq 3$, then there is no $e$-perfect code in \textup{J}$(n,w)$.
\end{theorem}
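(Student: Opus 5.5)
The plan is to reduce to the modulus $P=27$ and then combine \cref{lem:residue-localization} with \cref{lem:block congruence}, exactly as in the proofs of \cref{thm:firstSieve} and \cref{thm:secondSieve}. Since $k\ge 3$, the hypothesis $e\equiv -2$ or $-7~(\mmod~3^k)$ implies $e\equiv -2$ or $-7~(\mmod~27)$. So it suffices to treat $p=3$, $P=27$, $D=9$.

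Suppose an $e$-perfect code exists in J$(n,w)$, and write $e=27\epsilon+e_0$, $w=27\alpha+b$, $\nminusw=27\beta+c$ with $0\le b,c<27$. Here $e_0=25$ in the first case and $e_0=20$ in the second. The assumption $e>7$ ensures $e\ge 20$ in the second case, so these residues are genuinely the values used in \cref{lem:block congruence}. The goal is to show that $b,c\le e_0$ and $9\mid\binom{b+c}{b}$. Then \cref{lem:block congruence} gives $9\mid\Phi_e(n,w)$, contradicting \cref{thm:sufficient condition}.

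\emph{Case $e\equiv-2~(\mmod~27)$.} Here $27\mid e+2$. By \cref{eq-congruence-e+2}, $e+2$ divides $2(w+2)$ and $2(\nminusw+2)$, so $w\equiv\nminusw\equiv -2~(\mmod~27)$ and hence $b=c=25\le e_0$. (Alternatively, \cref{lem:residue-localization} with $\gamma=2$ gives $b,c\in\{24,25\}$.) In base $3$ we have $25=(221)_3$. In the addition $25+25$ the units digits $1+1$ give no carry, while the next two digits each produce a carry. Kummer's theorem then gives $9\mid\binom{50}{25}$.

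\emph{Case $e\equiv-7~(\mmod~27)$.} \cref{lem:residue-localization} with $P=27$ and $\gamma=7\le 14$ gives $14\le b,c\le 20=e_0$. Applying the same lemma with $P=3$ and $\gamma=1$ (since $e\equiv -1~(\mmod~3)$) gives $b\equiv c\equiv 2~(\mmod~3)$. Hence $b,c\in\{14,17,20\}=\{(112)_3,(122)_3,(202)_3\}$. It remains to check the six unordered pairs directly. In each, the units digits $2+2$ carry, and then at least one further carry occurs in the $3$- or $9$-place: for example, $14+20$ gives $1+0+1=2$ with no carry in the $3$-place but then $1+2=3$ carries in the $9$-place. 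So Kummer's theorem gives $9\mid\binom{b+c}{b}$ in every case.

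The main obstacle is making sure the residues $b,c$ are pinned down tightly enough for the carry count. In the $-7$ case \cref{lem:residue-localization} cannot be applied modulo $9$, because $\gamma=7>5$. The plan therefore relies only on the information modulo $27$ and modulo $3$ together with the finite case check. One should also confirm that in both cases $b,c\le e_0$, as \cref{lem:block congruence} requires; this holds in both cases.
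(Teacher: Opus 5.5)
Your proof is correct and follows essentially the same route as the paper. In the $-2$ case both arguments use \eqref{eq-congruence-e+2} to force $b=c=P-2$ and then find carries in the $3^1$- and $3^2$-places. In the $-7$ case the paper reruns inline, for $P=3^k$, the Steiner-system/Kummer argument that you package as \cref{lem:residue-localization} with $\gamma=7$ and $\gamma=1$; both reach $b,c\in\{P-13,P-10,P-7\}$ and finish with \cref{lem:block congruence}. The only differences are cosmetic: you reduce to $P=27$ and check the six pairs explicitly, whereas the paper gets the second carry directly from $b+c\ge 2P-26>P$.
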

\begin{proof}
Let $P=3^k$ and assume first that $e \equiv -2  ~(\mmod~P)$ and let $b$ and $c$ be defined as in \cref{lem:block congruence}.
By \cref{eq-congruence-e+2} we have that $2(w+2) \equiv 0~(\mmod~e+2)$ and since $e \equiv -2  ~(\mmod~3^k)$, it follows that $w \equiv -2~(\mmod~P)$.
Moreover, $b \equiv w ~(\mmod~P)$ and $0 \leq  b < P$ implies that $b=P-2$. Similarly, also $c=P-2$.
The ternary representation of $3^k -2$ has a~$1$ in the least significant
digit and $2$s in all the other digits. Since $k \geq 3$ we have two carries, one in the $3^1$-place and a second in the $3^2$-place
when we add $b+c$. Therefore, by Kummer's theorem we have that $9$ divides $\binom{b+c}{b}$.
Thus, by \cref{lem:block congruence} we have that $9$ divides $\Phi_e(n,w)$
contradicting \cref{thm:sufficient condition}, and thus there is no $e$-perfect code in J$(n,w)$.

Assume now that $e \equiv -7  ~(\mmod~3^k)$. By \cref{prop:embedded_Steiner} there exists a Steiner system S$(e+1,2e+1,w)$
which by \cref{prop:necessary_Steiner} implies that $e+1$ divides $w-e$ and hence $w-e \equiv 0~(\mmod~3)$.
\cref{prop:necessary_Steiner} also implies that $\binom{e+7}{7}$ divides $\binom{w-e+6}{7}$.
Since $P$ divides $e+7$, it follows by Kummer's theorem that $\nu_3 ( \binom{e+7}{7} ) \geq k$. As a consequence we have that
$\nu_3 ( \binom{w-e+6}{7} ) \geq  \nu_3 ( \binom{e+7}{7} ) \geq k$. This also implies by Kummer's theorem that adding $7$ to
$w-e-1$ generates at least $k$ carries and hence $0 \leq w-e+6 ~(\mmod~P) <7$ and since $w-e+6 \equiv 0~(\mmod~3)$, this implies that
$$
w-e+6 \equiv 0,3, ~ \text{or} ~ 6 ~(\mmod~P).
$$

As a consequence we have that $w-e+6 \equiv w+7+6 \equiv w+13~(\mmod~P)$, and
$$
b \equiv w ~(\mmod~P), ~~~ b \in \{P-13,~P-10,~P-7\}.
$$
With the same arguments we have that $c \in \{P-13,~P-10,~P-7\}$. The least significant digit of $b$ and $c$ is $2$ implying
one carry in $b+c$ and since $P \geq 27$, it implies that $b+c >P$ and a second carry. Therefore, $9$ divides $\binom{b+c}{b}$
and by \cref{lem:block congruence} we have that $9$ divides $\Phi_e(n,w)$
and hence by \cref{thm:sufficient condition} there is no $e$-perfect code in J$(n,w)$.
\end{proof}

\begin{corollary} %[Small-radius sieve]
\label{cor:small-radii-prime-square}
Among the radii $1 \le e<100$, only $21$ radii are not ruled out for the existence of an $e$-perfect code in \textup{J}$(n,w)$.
These radii are
$$
1,2,4,9,10,12,16,32,33,34,36,42,50,64,65,66,72,81,82,84,88.
$$
\end{corollary}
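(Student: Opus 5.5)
The corollary is a finite sieve over $1\le e\le 99$. The plan is to apply, for each $e$, the exclusion criteria proved so far, and then check that exactly the listed $21$ radii survive. The tools are: \cref{cor:square-factor-e-plus-one} ($p^2\mid e+1$); \cref{thm:e-plus-2-mod-8} ($8\mid e+2$), which together give \cref{cor:3radii_out}; \cref{thm:p-adic-interval-criterion}; \cref{thm:firstSieve} for $p\ge 5$; \cref{thm:secondSieve} for powers of $2$; and \cref{thm:thirdSieve} for powers of $3$. Since every criterion is a congruence condition on $e$, the verification is mechanical. It could be done by computer, but I would present it as a residue-class argument.

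\textbf{Step 1: powers of $2$.} \cref{cor:3radii_out} removes $e\equiv 3,6,7 \pmod 8$. \cref{thm:secondSieve} with $k=3$ ($\ell\in\{0,1\}$) removes $e\equiv 7,5 \pmod 8$. With $k=4$ ($\ell\le 2$) it removes $e\equiv 15,13,11 \pmod{16}$. With $k=5$ ($\ell\le 4$) it removes $e\equiv 31,29,\dots,23 \pmod{32}$, and with $k=6,7$ it removes the analogous odd classes modulo $64$ and $128$. Combining these, the odd survivors below $100$ should be only $1,9,33,65,81$ (plus any odd ones handled later), together with the even classes $e\equiv 0,2,4 \pmod 8$. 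This is the step where I will list explicitly which residues remain.

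\textbf{Step 2: powers of $3$ and larger primes.} \cref{cor:square-factor-e-plus-one} with $p=3,5,7$ removes $e\equiv 8 \pmod 9$, $e\equiv 24 \pmod{25}$ and $e\equiv 48 \pmod{49}$. \cref{thm:thirdSieve} with $k=3$ removes $e=25,52,79$ (the case $e\equiv -2 \pmod{27}$) and $e=20,47,74$ (the case $e\equiv -7 \pmod{27}$). \cref{thm:p-adic-interval-criterion} with $p=3$, $k=2$ gives $\lfloor 5/4\rfloor=1$. So if $\gamma\equiv 1 \pmod 9$, i.e.\ $e\equiv 3^r-1 \pmod 9$, the radius is excluded whenever $3^{r-1}\le e<3^r$ with $r\ge 2$. \cref{thm:firstSieve} with $p=5$ removes $e\equiv -1,-2 \pmod{25}$ together with $\ell=1$, i.e.\ $e\equiv -6,-7 \pmod{25}$. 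With $p=7$ it removes $e\equiv -\kappa,-(7+\kappa) \pmod{49}$ for $\kappa\le 2$, and with $p=11$ it removes the analogous classes modulo $121$, which matter only for $e<100$. \cref{thm:p-adic-interval-criterion} with $p=2$ should reproduce several of the Step~1 exclusions, and I will use it to fill any gaps there.

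\textbf{Step 3: final tabulation.} I will go through $e=1,\dots,99$ in a table, recording for each excluded $e$ one responsible criterion, and confirm that the survivors are exactly $1,2,4,9,10,12,16,32,33,34,36,42,50,64,65,66,72,81,82,84,88$. For the survivors I will also check that none of the hypotheses hold, in particular that $e+1$ is squarefree: for instance $e+1=10,11,13,17,33,\dots$.

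The main obstacle is bookkeeping rather than mathematics. The even residues $e\equiv 0,2,4 \pmod 8$ are attacked only by the odd-prime criteria, so for the even $e$ (e.g.\ $18,20,24,26,28,\dots$) I must find the specific criterion that excludes each one, using $p=3,5,7$. Here I will carefully check the side conditions of \cref{thm:p-adic-interval-criterion}, namely $p^{r-1}\le e<p^r$ and $2\le k\le r$. If some nonlisted $e$ appears not to be covered, I would first try \cref{thm:p-adic-interval-criterion} with $p=2$ and larger $k$, or with $p=5$, $r=3$, before suspecting the list.
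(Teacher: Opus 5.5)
Your strategy, a finite congruence sieve over $1\le e\le 99$ using the criteria of Sections~\ref{sec:strength} and~\ref{sec:sieve} and recorded radius by radius, is the paper's own. Table~\ref{tab:small-radius-prime-square-details} carries it out for $e\le 50$, and the list in the statement is correct. However, your plan contains a false structural claim and omits an instance that is genuinely needed, so carried out as described it would not confirm the list.

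First, it is not true that the classes $e\equiv 0,2,4\pmod 8$ are attacked only by odd-prime criteria. Consider $e=28,56,58,60,92$. For each of them $e+1$ is square-free and $8\nmid e+2$. They also escape \cref{thm:firstSieve}, \cref{thm:thirdSieve} and every odd-prime instance of \cref{thm:p-adic-interval-criterion}. They are removed only by \cref{thm:p-adic-interval-criterion} with $p=2$:
\begin{itemize}
\item For $k=5$, the condition $1\le\gamma_5\le 4$ gives, within the surviving classes, $e\equiv 28\pmod{32}$, i.e.\ $28,60,92$.
\item For $32\le e<64$, the choice $k=r=6$ gives $\gamma=64-e\le 8$, i.e.\ $56,58,60$.
\end{itemize}
In particular, your example $e=28$ cannot be handled with $p=3,5,7$; the paper uses $e\equiv-4\pmod{32}$. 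Your fallback ``$p=2$ with larger $k$'' would recover these radii, but it has to be part of the sieve, not an afterthought.

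Second, and more seriously, $e=76$ is excluded by none of the instances you name, fallbacks included:
\begin{itemize}
\item $77=7\cdot 11$ is square-free, and $76\equiv 4\pmod 8$;
\item $76\equiv 1\pmod{25}$, $76\equiv 27\pmod{49}$, and $76\not\equiv -2,-7$ modulo $27$ or $81$;
\item the $p=2$ and $p=5$ versions of \cref{thm:p-adic-interval-criterion} fail, with $\gamma=52$ and $\gamma=49$ respectively.
\end{itemize}
It falls only to \cref{thm:p-adic-interval-criterion} with $p=3$ and $k=r=4$, since $\gamma=81-76=5\le\lfloor 29/4\rfloor=7$. You instantiated $p=3$ only at $k=2$, which merely reproduces $9\mid e+1$. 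Your procedure would therefore leave $76$ as an apparent survivor and lead you to ``suspect the list'' wrongly.

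The fix is to apply \cref{thm:p-adic-interval-criterion} for every prime and every $2\le k\le r$. For example, with $p=3$ and $k=4$ it removes $74,\dots,80$ when $27\le e<81$. With that in place, together with your Step~1 and the remaining odd radii $17$ ($9\mid 18$), $41$ ($\equiv -8\pmod{49}$) and $73$ ($\equiv -2\pmod{25}$), the sieve leaves exactly the $21$ listed radii.
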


Table~\ref{tab:small-radius-prime-square-details} records every radius exclusion used for \cref{cor:small-radii-prime-square} up to radius $50$.
Radii are grouped by the same forced prime-square divisor. If one radius is covered by more than one result, then all such results are listed.

{\scriptsize
\renewcommand{\arraystretch}{1.12}
\begin{longtable}{@{}>{\raggedright\arraybackslash}p{0.18\textwidth}
                        >{\centering\arraybackslash}p{0.13\textwidth}
                        >{\raggedright\arraybackslash}p{0.61\textwidth}@{}}
\caption{Prime-square exclusions for \(1\le e\le50\) using only results so far.}\label{tab:small-radius-prime-square-details}\\
\toprule
Radii excluded & Forced divisor & Result and concrete congruence(s) \\
\midrule
\endfirsthead
\toprule
Radii excluded & Forced divisor & Result and concrete congruence(s) \\
\midrule
\endhead
\bottomrule
\endfoot
\(3,5,6,7,11,13,14\)\par
\(15,19,21,22,23,25,27\)\par
\(28,29,30,31,35,37,38\)\par
\(39,43,45,46,47,49\)
& \(2^2\mid\Phi_e(n,w)\)
& \textbf{Theorem~\ref{thm:e-plus-2-mod-8}:} \(e\equiv-2\pmod8\) for \(e=6,14,22,30,38,46\).\par
\textbf{Theorem~\ref{thm:p-adic-interval-criterion}:}
\(e\equiv-1\pmod4\) for \(e=3,7,11,15,19,23,27,31,35,39,43,47\); and
\(e\equiv-4\pmod{32}\) for \(e=28\).\par
\textbf{Theorem~\ref{thm:secondSieve}:} \(e\equiv-3\pmod8\) for \(e=5,13,21,29,37,45\); \(e\equiv-7\pmod{32}\) for \(e=25\); and \(e\equiv-15\pmod{64}\) for \(e=49\). \\
\midrule
\(8,17,20,25\)\par
\(26,35,44,47\)
& \(3^2\mid\Phi_e(n,w)\)
& \textbf{Theorem~\ref{thm:p-adic-interval-criterion}:} \(e\equiv-1\pmod9\) for \(e=8,17,26,35,44\); and \(e\equiv-2\pmod{27}\) for \(e=25\).\par
\textbf{Theorem~\ref{thm:thirdSieve}:} \(e\equiv-7\pmod{27}\) for \(e=20,47\).\\
\midrule
\(18,19,23,24\)\par
\(43,44,48,49\)
& \(5^2\mid\Phi_e(n,w)\)
& \textbf{Theorem~\ref{thm:p-adic-interval-criterion}:} \(e\equiv-1\pmod{25}\) for \(e=24,49\).\par
\textbf{Theorem~\ref{thm:firstSieve}:} \(e\equiv-2\pmod{25}\) for \(e=23,48\); \(e\equiv-6\pmod{25}\) for \(e=19,44\); and \(e\equiv-7\pmod{25}\) for \(e=18,43\). \\
\midrule
\(40,41,47,48\)
& \(7^2\mid\Phi_e(n,w)\)
& \textbf{Theorem~\ref{thm:p-adic-interval-criterion}:} \(e\equiv-1\pmod{49}\) for \(e=48\); and \(e\equiv-2\pmod{49}\) for \(e=47\).\par
\textbf{Theorem~\ref{thm:firstSieve}:} \(e\equiv-8\pmod{49}\) for \(e=41\); and \(e\equiv-9\pmod{49}\) for \(e=40\). \\
\end{longtable}
}

\section{Prime Factorization of a Ball}
\label{sec:divide_ball}

In this section we consider primes that divide the size of a ball $\Phi_e(n,w)$.
We start with a lemma that is akin to \cref{lem:block congruence} and \cref{lem:residue-localization}
and its associated theorem.

\begin{lemma}
\label{lem:block-congruence-mod-p}
Let $p$ be a prime, let $P=p^k$, where $k\ge 1$, and let
$$
e=P\epsilon+e_0,\qquad w=P\alpha+b,\qquad \nminusw=P\beta+c,
$$
where $0\le e_0<P$ and $0\le b,c\le e_0$. If $p\mid\binom{b+c}{b}$, then $p$ divides $\Phi_e(n,w)$.
\end{lemma}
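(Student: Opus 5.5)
The plan is to repeat the argument of \cref{lem:block congruence}, working modulo $p$ instead of modulo $p^2$. Modulo $p$ the Frobenius identity is exact: $(1+x)^P\equiv 1+x^P \pmod p$. Hence
$$
(1+x)^w\equiv (1+x^P)^{\alpha}(1+x)^b \pmod p, \qquad (1+x)^{\nminusw}\equiv (1+x^P)^{\beta}(1+x)^c \pmod p .
$$
Since $0\le b<P$, extracting the coefficient of $x^{Pm+r}$ with $0\le r<P$ gives no cross terms:
$$
\binom{w}{Pm+r}\equiv\binom{\alpha}{m}\binom{b}{r}, \qquad \binom{\nminusw}{Pm+r}\equiv\binom{\beta}{m}\binom{c}{r} \pmod p .
$$
This is just Lucas' theorem (\cref{thm:Lucas}) applied to the split into the low $k$ digits and the high digits. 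There are no $\lambda,\mu$ shifts, so this case is simpler than \cref{lem:block congruence}.

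Next, partition the index set $[0,e]$ of the sum in \cref{eq:ball_size} into the full blocks $\{Pm+r : 0\le r\le P-1\}$ for $0\le m<\epsilon$, and the final block $\{P\epsilon+r : 0\le r\le e_0\}$. The $m$-th block contributes modulo $p$
$$
\binom{\alpha}{m}\binom{\beta}{m}\sum_{r}\binom{b}{r}\binom{c}{r}.
$$
In a full block, $r$ runs over $0\le r\le P-1$. Since $b,c<P$, this range covers every $r$ with $\binom{b}{r}\binom{c}{r}\neq 0$. In the final block, $r$ runs over $0\le r\le e_0$, and the hypothesis $b,c\le e_0$ gives the same conclusion. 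So in every block the inner sum is the complete sum $\sum_r\binom{b}{r}\binom{c}{c-r}=\binom{b+c}{b}$ by Vandermonde's identity.

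Therefore
$$
\Phi_e(n,w)\equiv\binom{b+c}{b}\sum_{m=0}^{\epsilon}\binom{\alpha}{m}\binom{\beta}{m}\pmod p,
$$
and the hypothesis $p\mid\binom{b+c}{b}$ gives $p\mid\Phi_e(n,w)$.

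The only step needing care is the completeness of the inner sum in the last, truncated block. This is exactly where the hypothesis $b,c\le e_0$ is used, just as in \cref{lem:block congruence}. Apart from that the argument is routine.
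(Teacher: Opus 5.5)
Your proposal is correct and follows essentially the same route as the paper: you use the Lucas-type congruence $\binom{P\alpha+b}{Pm+r}\equiv\binom{\alpha}{m}\binom{b}{r}\pmod p$ (derived from $(1+x)^P\equiv 1+x^P$ instead of quoting Lucas' theorem), split $[0,e]$ into $\epsilon$ full blocks and one truncated block, and apply Vandermonde's identity in each block, with $b,c\le e_0$ ensuring that the truncated inner sum is complete. The resulting factorization $\Phi_e(n,w)\equiv\binom{b+c}{b}\sum_{m=0}^{\epsilon}\binom{\alpha}{m}\binom{\beta}{m}\pmod p$ is exactly how the paper concludes.
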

\begin{proof}
By Lucas' theorem, for $0\le r<P$, we have that
$$
\binom{w=P\alpha+b}{Pm+r}\equiv \binom{\alpha}{m}\binom{b}{r}~(\mmod~p),\qquad \binom{\nminusw=P\beta+c}{Pm+r}\equiv\binom{\beta}{m}\binom{c}{r}~(\mmod~p).
$$
Since $0 \leq b,c \leq e_0$, it follows that
$$
\sum_{r=0}^{P-1}\binom{b}{r}\binom{c}{r}=\sum_{r=0}^{e_0}\binom {b}{b-r}\binom{c}{r}=\binom{b+c}{b}.
$$
By splitting the sum of $\Phi_e(n,w)$ in \cref{{eq:ball_size}} into $\epsilon$ blocks of size $P$,
one block of size $e_0 +1$, and computing modulo $p$, we use Lucas' theorem to obtain
$$
\Phi_e(n,w)=\sum_{i=0}^e \binom{w}{i} \binom{\nminusw}{i} = \sum_{m=0}^{\epsilon-1} \sum_{r=0}^{P-1} \binom{w}{Pm+r}\binom{\nminusw}{Pm+r}
+\sum_{r=0}^{e_0} \binom{w}{P\epsilon+r}\binom{\nminusw}{P \epsilon+r}
$$
$$
\equiv \sum_{m=0}^{\epsilon-1} \sum_{r=0}^{P-1} \binom{\alpha}{m}\binom{b}{r}\binom{\beta}{m}\binom{c}{r}+\sum_{r=0}^{e_0} \binom{\alpha}{\epsilon}\binom{b}{r}\binom{\beta}{\epsilon}\binom{c}{r}
$$
$$
\equiv \sum_{m=0}^{\epsilon-1}  \binom{\alpha}{m}\binom{\beta}{m} \sum_{r=0}^{P-1} \binom{b}{r}\binom{c}{r}+\binom{\alpha}{\epsilon}\binom{\beta}{\epsilon}\sum_{r=0}^{e_0} \binom{b}{r}\binom{c}{r}
$$
$$
\equiv  \binom{b+c}{b}\sum_{m=0}^{\epsilon-1}\binom{\alpha}{m}\binom{\beta}{m}+ \binom{\alpha}{\epsilon}\binom{\beta}{\epsilon}\binom{b+c}{b} \equiv0~(\mmod~p).
$$
\end{proof}

\begin{theorem}
\label{thm:modulo-p-criterion}
Let $p$ be a prime and suppose that, for some $k \geq 1$, $P=p^k$, $b \equiv w~(\mmod~P)$, $c \equiv \nminusw~(\mmod~P)$, where
$0 \leq b,c <P$, and suppose that
$$
e\equiv-\gamma~(\mmod~P) \qquad \text{and}  \qquad 1 \leq \gamma\le\left\lfloor\frac{P+2}{4}\right\rfloor.
$$
If an $e$-perfect code exists in \textup{J}$(n,w)$, then $p$ divides $\Phi_e(n,w)$.
\end{theorem}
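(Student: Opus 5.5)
The plan is to combine \cref{lem:residue-localization} with \cref{lem:block-congruence-mod-p}, in the same way that \cref{thm:p-adic-interval-criterion} combined \cref{lem:residue-localization} with \cref{lem:block congruence}. The target is now only a single factor $p$, so one carry in the base-$p$ addition $b+c$ suffices.

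\textbf{Step 1: locate $b$ and $c$.} Assume an $e$-perfect code exists in J$(n,w)$. Since $1\le\gamma\le\lfloor (P+2)/4\rfloor\le (P+1)/2$, \cref{lem:residue-localization} applies and gives
$$
P-2\gamma+1\le b,c\le P-\gamma .
$$

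\textbf{Step 2: set up \cref{lem:block-congruence-mod-p}.} Write $e=P\epsilon+e_0$ with $0\le e_0<P$. Since $e\equiv -\gamma~(\mmod~P)$ and $1\le\gamma\le P$, we get $e_0=P-\gamma$. The upper bound of Step 1 then gives $0\le b,c\le e_0$, which is exactly the hypothesis of \cref{lem:block-congruence-mod-p}. If $e<P$, then $\epsilon=0$ and the decomposition is still valid. If $\epsilon\ge0$ fails, i.e., $e<e_0$, that is impossible here, since $e\ge0$ and $e\equiv e_0$ with $0\le e_0<P$ force $e\ge e_0$.

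\textbf{Step 3: produce a carry.} It remains to show $p\mid\binom{b+c}{b}$. By Kummer's theorem it suffices to exhibit one carry when adding $b$ and $c$ in base $p$. For this, show that $b+c\ge P$ while $b,c<P$: then the sum of two $k$-digit base-$p$ numbers overflows into the $p^k$-place, which forces at least one carry. From Step 1,
$$
b+c\ge 2(P-2\gamma+1)=2P-4\gamma+2\ge 2P-(P+2)+2=P ,
$$
using $4\gamma\le P+2$.

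\textbf{Conclusion and the delicate point.} Kummer's theorem now gives $p\mid\binom{b+c}{b}$, and \cref{lem:block-congruence-mod-p} yields $p\mid\Phi_e(n,w)$. The main point to check carefully is the hypothesis $b,c\le e_0$ in Step 2, i.e., the identification $e_0=P-\gamma$ together with the upper end of the interval from \cref{lem:residue-localization}. The edge case $\gamma=P$ cannot occur, since $\gamma\le(P+2)/4<P$ for all $P\ge2$. I would also note that when $P=2$ the bound gives $\gamma=1$, and the argument above goes through unchanged.
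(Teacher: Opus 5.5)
Your proposal is correct and matches the paper's proof: \cref{lem:residue-localization} gives $P-2\gamma+1\le b,c\le P-\gamma$, the bound $4\gamma\le P+2$ yields $b+c\ge P$ and hence a carry, and Kummer's theorem together with \cref{lem:block-congruence-mod-p} gives $p\mid\Phi_e(n,w)$. Your explicit check that $e_0=P-\gamma$, and hence $b,c\le e_0$, fills in a hypothesis of \cref{lem:block-congruence-mod-p} that the paper uses without stating it.
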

\begin{proof}
By~\cref{lem:residue-localization} we have that
$$
P-2\gamma+1\le b,c\le P-\gamma.
$$
This implies that $b+c \geq 2P-4\gamma +2$ and since $\gamma\le\left\lfloor (P+2)/4 \right\rfloor$ we have that $4\gamma \leq P+2$
and hence $b+c \geq P$. Therefore, the base-$p$ addition of $b$ and $c$ has at least one carry from the $p^{k-1}$-place.
Kummer's theorem implies now that $p$ divides $\binom{b+c}{b}$, and hence by \cref{lem:block-congruence-mod-p} we have that
$p$ divides $\Phi_e(n,w)$.
\end{proof}

\begin{corollary}%[Moving detected primes]
\label{cor:moving-prime-detector}
If an $e$-perfect code exists in \textup{J}$(n,w)$ and $e<p\le(4e+2)/3$ is a prime, then $p$ divides $\Phi_e(n,w)$.
\end{corollary}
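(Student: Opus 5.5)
This follows from \cref{thm:modulo-p-criterion} with $k=1$, so $P=p$. Since $e<p$, the residue of $e$ modulo $p$ is $e$ itself, and we can write $e\equiv-\gamma~(\mmod~p)$ with $\gamma=p-e$. Because $p>e$, we have $\gamma\ge 1$. The plan is therefore to check the second hypothesis of \cref{thm:modulo-p-criterion}, namely $\gamma\le\lfloor (p+2)/4\rfloor$, and then invoke that theorem directly to conclude that $p\mid\Phi_e(n,w)$.

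Since $\gamma$ is an integer, the condition $\gamma\le\lfloor (p+2)/4\rfloor$ is equivalent to $4\gamma\le p+2$. Substituting $\gamma=p-e$, this becomes $4p-4e\le p+2$, which is the same as $3p\le 4e+2$. This is exactly the assumption $p\le (4e+2)/3$.

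One side condition needs checking. \cref{thm:modulo-p-criterion} relies on \cref{lem:residue-localization}, which requires $\gamma\le (P+1)/2$. This holds because $(p+2)/4\le (p+1)/2$ for every $p\ge 0$.

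I expect no real obstacle beyond this arithmetic. The only subtlety is that the floor in the bound causes no loss, since $\gamma$ is an integer. The conclusion of \cref{thm:modulo-p-criterion} then gives that $p$ divides $\Phi_e(n,w)$ for every prime $p$ in the interval $(e,(4e+2)/3]$.
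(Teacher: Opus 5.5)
Your proposal is correct and matches the paper's argument: the paper's proof simply takes $P=p$ and $\gamma=p-e$ in \cref{thm:modulo-p-criterion}. Your arithmetic ($4\gamma\le p+2 \iff 3p\le 4e+2$, together with $\gamma\le (p+1)/2$ for \cref{lem:residue-localization}) just spells out the verification that the paper leaves implicit.
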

\begin{proof}
Take $P=p$ and $\gamma=p-e$ in \cref{thm:modulo-p-criterion}.
\end{proof}

In view of \cref{cor:moving-prime-detector} we have that every prime in the set
$$
\cP_e \triangleq \left\{p\text{ prime}:e<p\le Q_e \triangleq \frac{4e+2}{3}\right\}
$$
divides $\Phi_e(n,w)$.

We continue to consider primes that might divide $\Phi_e(n,w)$.
The importance of the strength of an $e$-perfect code in J$(n,w)$ is by improving the divisibility condition
implied by \cref{eq:code_size}. It was used for the theory that led to the proof
of \cref{prop:p2_divides_ball} in~\cite{EtSc04}. This improvement was given in~\cite{EtSc04}.

\begin{proposition} [Theorem 17 in~\cite{EtSc04}]
\label{prop:divisiility_strength}
Let $e \geq 1$ and $\cC$ be an $e$-perfect code in \textup{J}$(n,w)$ with strength $\varphi$. For each $0\le i\le \varphi$, the code is an $i$-design,
and the number of codewords containing a fixed $i$-subset is
\begin{equation}
\label{eq:lambda_i}
\lambda_i =\frac{|\mathcal{C}|\binom{w}{i}}{\binom{n}{i}} =\frac{1}{\Phi_e(n,w)}\binom{n-i}{w-i}.
\end{equation}
Consequently,
$$
\Phi_e(n,w) \big| \binom{n-i}{w-i},\qquad 0\le i\le \varphi.
$$
\end{proposition}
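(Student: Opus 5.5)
The plan is a double count followed by the sphere-packing identity. Fix $0\le i\le\varphi$. Since $\varphi$ is the strength of $\cC$, the code is a $\varphi$-design. A $\varphi$-design is also an $i$-design for every $i\le\varphi$. To see this, fix an $i$-subset $I$ and count pairs $(T,\bldc)$ with $I\subseteq T\subseteq\bldc$, $\abs{T}=\varphi$, $\bldc\in\cC$. This gives
$$
\lambda_i\binom{w-i}{\varphi-i}=\lambda_\varphi\binom{n-i}{\varphi-i},
$$
so $\lambda_i$ does not depend on $I$. Equivalently, one can invoke the proposition that $t$-regularity and being a $t$-design coincide, together with the remark that $t$-regular implies $t'$-regular for $t'<t$.

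For the formula, count pairs $(I,\bldc)$ with $\abs{I}=i$, $I\subseteq\bldc\in\cC$ in two ways. Every codeword contains $\binom{w}{i}$ such $I$, and every $i$-subset lies in $\lambda_i$ codewords. Hence $\lambda_i\binom{n}{i}=\abs{\cC}\binom{w}{i}$, which is the first expression in \cref{eq:lambda_i}. Substituting $\abs{\cC}=\binom{n}{w}/\Phi_e(n,w)$ from \cref{eq:code_size} and using
$$
\binom{n}{w}\binom{w}{i}=\binom{n}{i}\binom{n-i}{w-i}
$$
gives $\lambda_i=\binom{n-i}{w-i}/\Phi_e(n,w)$.

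Since $\lambda_i$ counts codewords, it is a nonnegative integer. Therefore $\Phi_e(n,w)$ divides $\binom{n-i}{w-i}$ for every $0\le i\le\varphi$. There is no real obstacle. The only point needing care is the downward inheritance of the design property from $\varphi$ to $i$, and the double count above settles it.
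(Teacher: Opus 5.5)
Your argument is correct and complete. The downward inheritance via counting pairs $(T,\bldc)$ (valid because $\varphi<w$ makes $\binom{w-i}{\varphi-i}\neq 0$), the double count $\lambda_i\binom{n}{i}=\abs{\cC}\binom{w}{i}$, and the substitution of $\abs{\cC}=\binom{n}{w}/\Phi_e(n,w)$ together with $\binom{n}{w}\binom{w}{i}=\binom{n}{i}\binom{n-i}{w-i}$ form the standard reasoning behind this result. The paper gives no proof of its own here and only quotes Theorem~17 of \cite{EtSc04}.
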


The following refinement of \cref{prop:divisiility_strength}
simultaneously prescribes coordinates that must occur in a codeword and coordinates that must be avoided.
The following theorem is the key in finding primes in the prime factorization of $\Phi_e(n,w)$.
\begin{theorem}
\label{thm:mixed-design-divisibility}
Let $e \geq 1$ and $\cC$ be an $e$-perfect code in $\textup{J}(n,w)$, over $[n]$ and strength $\varphi$. Let $X,Y\subseteq [n]$ be two disjoint subsets,
with
$$
|X|=r,\qquad|Y|=m,\qquad r+m\le \varphi.
$$
Then, the number of codewords $\bldc\in\cC$ satisfying $X\subseteq \bldc$ and $\bldc\cap Y=\varnothing$ is $\frac{1}{\Phi_e(n,w)}\binom{n-r-m}{w-r}$
and hence,
$$
\Phi_e(n,w)\big| \binom{n-r-m}{w-r}.
$$
\end{theorem}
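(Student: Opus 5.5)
Our plan is inclusion–exclusion over the avoided set $Y$, using the design property of \cref{prop:divisiility_strength}. Since $r+m\le\varphi$, the code $\cC$ is an $i$-design for every $i\le r+m$. For any $i\le\varphi$ and any $i$-subset $Z$ of $[n]$, the number of codewords containing $Z$ is $\lambda_i=\binom{n-i}{w-i}/\Phi_e(n,w)$, which depends only on $i$.

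\textbf{Counting step.} Let $N(X,Y)$ be the number of codewords $\bldc$ with $X\subseteq\bldc$ and $\bldc\cap Y=\varnothing$. Inclusion–exclusion over the subsets $T\subseteq Y$ that are forced into the codeword gives
$$
N(X,Y)=\sum_{T\subseteq Y}(-1)^{|T|}\,\bigl|\{\bldc\in\cC ~:~ X\cup T\subseteq\bldc\}\bigr|=\sum_{j=0}^{m}(-1)^j\binom{m}{j}\lambda_{r+j}.
$$
This is valid because $X$ and $Y$ are disjoint, so $|X\cup T|=r+j\le r+m\le\varphi$ and each term is given by the design parameter.

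\textbf{Identity step.} Substituting $\lambda_{r+j}$, it remains to show
$$
\sum_{j=0}^{m}(-1)^j\binom{m}{j}\binom{n-r-j}{w-r-j}=\binom{n-r-m}{w-r}.
$$
We intend to prove this by induction on $m$ using Pascal's rule $\binom{N}{K}-\binom{N-1}{K-1}=\binom{N-1}{K}$. The same identity also follows combinatorially: both sides count the $(w-r)$-subsets of an $(n-r)$-set that avoid a fixed $m$-subset. Alternatively, one can extract coefficients via $\binom{n-r-j}{w-r-j}=[x^{n-w}](1+x)^{n-r-j}$, which turns the sum into $[x^{n-w}](1+x)^{n-r-m}\bigl((1+x)-1\bigr)^m\cdot x^{-m}$. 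We will choose whichever version is cleanest to write down.

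\textbf{Conclusion.} It follows that $N(X,Y)=\binom{n-r-m}{w-r}/\Phi_e(n,w)$. Since $N(X,Y)$ is a nonnegative integer, $\Phi_e(n,w)$ divides $\binom{n-r-m}{w-r}$.

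The only delicate point is checking that every index $r+j$ used stays within the range where \cref{prop:divisiility_strength} applies. The hypothesis $r+m\le\varphi$ guarantees exactly this. The binomial identity is routine.
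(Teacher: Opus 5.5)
Your proposal is correct and is essentially the paper's proof: inclusion--exclusion over subsets $Z\subseteq Y$ using $\lambda_{r+|Z|}=\binom{n-r-|Z|}{w-r-|Z|}/\Phi_e(n,w)$, then evaluating the alternating sum as the count of $(w-r)$-subsets of an $(n-r)$-set that avoid a fixed $m$-subset. One small slip is in your optional generating-function variant: $\sum_j(-1)^j\binom{m}{j}(1+x)^{n-r-j}=(1+x)^{n-r-m}x^m$ has no factor $x^{-m}$, and the correct evaluation is $[x^{n-w}](1+x)^{n-r-m}x^m=\binom{n-r-m}{n-w-m}=\binom{n-r-m}{w-r}$; with the extra $x^{-m}$ you would get the wrong $\binom{n-r-m}{n-w}$.
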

\begin{proof}
Let
$$
\lambda(X,Y)=|\{\bldc\in \cC: X\subseteq \bldc,\ \bldc\cap Y=\varnothing \}|.
$$
For each $y \in Y$, let $A_y = \{ \bldc \in \cC ~:~ X \cup \{ y \} \subseteq \bldc \}$.
Using the inclusion--exclusion principle we have that
$$
\lambda(X,Y)=\sum_{Z \subseteq Y}(-1)^{|Z|}|\{\bldc\in \cC ~:~ X\cup Z \subseteq \bldc\}|.
$$
Since $|X\cup Z|\leq r+m\le \varphi$, it follows by \cref{eq:lambda_i} that
$$
|\{\bldc\in \cC ~:~ X\cup Z\subseteq \bldc\}| =\binom{n-r-|Z|}{w-r-|Z|} \Big/ \Phi_e(n,w).
$$
Therefore, we have that
$$
\lambda(X,Y)=\frac{1}{\Phi_e(n,w)}\sum_{z=0}^m(-1)^z \binom{m}{z}\binom{n-r-z}{w-r-z}.
$$
The sum $\sum_{z=0}^m(-1)^z \binom{m}{z}\binom{n-r-z}{w-r-z}$ is the enumeration by inclusion--exclusion
of all $w$-subsets of $[n]$ which contain $X$ and avoid $Y$. Hence,
$$
\sum_{z=0}^m(-1)^z\binom{m}{z}\binom{n-r-z}{w-r-z}=\binom{n-r-m}{w-r}.
$$
Therefore,
$$
\lambda(X,Y)=\frac{1}{\Phi_e(n,w)}\binom{n-r-m}{w-r}.
$$
Since $\lambda(X,Y)$ is an integer, it follows that $\Phi_e(n,w)$ divides $\binom{n-r-m}{w-r}$.
\end{proof}

%If $r+m=\varphi$ in \cref{thm:mixed-design-divisibility}, then we have divisibility
%conditions for a closed interval of binomial coefficients in a single row of the Pascal's triangle.

\begin{corollary}
\label{cor:interval_overlap}
Assume that $e \geq 1$ and $\cC$ is an $e$-perfect code with strength redundancy $d$ in \textup{J}$(n,w)$. Then,
\begin{equation}
\label{eq:gcd_binom}
\Phi_e(n,w) \big| \hspace{-0.1cm} \gcd_{d \le k\le \nminusw} \hspace{-0.1cm} \bigl( \binom{\nminusw+d}{k} \bigr).
\end{equation}
\end{corollary}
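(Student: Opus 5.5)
The plan is to apply \cref{thm:mixed-design-divisibility} with the largest admissible value $r+m=\varphi=w-d$, and then to use the symmetry of binomial coefficients to cover the whole interval $d\le k\le \nminusw$. Since $n=w+\nminusw$, any split $r+m=w-d$ gives $n-r-m=\nminusw+d$. The theorem therefore yields
$$
\Phi_e(n,w)\ \Big|\ \binom{\nminusw+d}{w-r}, \qquad 0\le r\le w-d,
$$
where $m=w-d-r\ge 0$ is automatically admissible. We need $n-r-m\ge 0$, and disjoint sets $X,Y$ of the required sizes exist since $w-d\le n$. As $r$ runs over $0,\dots,w-d$, the lower index $k=w-r$ runs over the whole interval $[d,w]$.

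Next, I use $\binom{\nminusw+d}{k}=\binom{\nminusw+d}{\nminusw+d-k}$. This shows that $\Phi_e(n,w)$ also divides $\binom{\nminusw+d}{k}$ for every $k$ in the reflected interval $[\nminusw+d-w,\ \nminusw]=[\delta+d,\ \nminusw]$. The two intervals $[d,w]$ and $[\delta+d,\nminusw]$ together cover all integers in $[d,\nminusw]$ provided $\delta+d\le w+1$. In that case every binomial coefficient in \cref{eq:gcd_binom} is divisible by $\Phi_e(n,w)$, and so is their gcd.

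The main step, and the only real obstacle, is checking the covering inequality $\delta+d\le w+1$. For $e\ge 2$, I plan to combine \cref{eq:RoosNotMet} with \cref{cor:more_str_bound}. From $2w+\delta=n<(w-1)\frac{2e+1}{e}$ we get $\delta<\frac{w-1}{e}-2\le\frac{w-1}{2}-2$. Together with $d\le\lceil w/2\rceil\le\frac{w+1}{2}$, this gives $\delta+d<w-1$, which is more than enough. For $e=1$, \cref{cor:more_str_bound} is not available, so I would instead use Martin's parametrization in \cref{prop:Martin}: $w=kr+1$, $n=2kr+r-k$ and $\varphi=k(r-1)$. These give $d=w-\varphi=k+1$ and $\delta=n-2w=r-k-2$, hence $\delta+d=r-1\le kr+2=w+1$. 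In both cases the two intervals overlap or abut, which completes the argument.
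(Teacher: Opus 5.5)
Your proposal is correct and follows the paper's proof. You apply \cref{thm:mixed-design-divisibility} with $r+m=w-d$ to get divisibility of $\binom{\nminusw+d}{k}$ for $d\le k\le w$, reflect via $\binom{M}{k}=\binom{M}{M-k}$ to cover $\delta+d\le k\le \nminusw$, and join the two intervals using the Roos bound with \cref{cor:more_str_bound} for $e\ge 2$ and Martin's parametrization for $e=1$. The only cosmetic differences are that you use the strict form \cref{eq:RoosNotMet} and state the exact covering condition $\delta+d\le w+1$, whereas the paper uses \cref{eq:Roos} and asserts overlap.
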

\begin{proof}
Applying \cref{thm:mixed-design-divisibility} with $r=w-d-j$ and $m=j$ implies that $r+m=w-d$, and therefore,
$$
\Phi_e(n,w) \big|  \binom{\nminusw+d}{d+j}.
$$
Letting $j$ run from $0$ to $w-d$ implies that
\begin{equation}
\label{eq:gcd_binom1}
\Phi_e(n,w) \big| \hspace{-0.1cm}\gcd_{d\le k\le w} \bigl( \binom{\nminusw+d}{k} \bigr).
\end{equation}
Let $M=\nminusw+d$ and consider the binomial identity $\binom{M}{k}=\binom{M}{M-k}$ implies that
\begin{equation}
\label{eq:gcd_binom2}
\Phi_e(n,w) \big|  \hspace{-0.2cm} \gcd_{\delta +d \le k\le \nminusw} \hspace{-0.2cm} \bigl( \binom{\nminusw+d}{k} \bigr).
\end{equation}
By \cref{eq:Roos} we have that $e \geq 2$ implies that $\delta \leq w/2$, and by \cref{cor:more_str_bound}, we have that
$d \leq \lceil w/2 \rceil$. This implies that $\delta+d <w$ and hence the intervals in \cref{eq:gcd_binom1,eq:gcd_binom2}
overlap, and therefore we obtain \cref{eq:gcd_binom}.

For $e = 1$, by \cref{prop:Martin} we have that $d=w-\varphi=k+1$ and $\delta=r-k-2$ and hence
$\delta + d = r-1 < w=kr+1$ which implies that the intervals in \cref{eq:gcd_binom1,eq:gcd_binom2} overlap and as a consequence
we obtain \cref{eq:gcd_binom}.
\end{proof}

%%%%%\begin{corollary}
%%%%%\label{cor:p-adic-chain}
%%%%%For every prime $p$,
%%%%%$$
%%%%%\nu_p\bigl(\Phi_e(n,w)\bigr)\leq \hspace{-0.2cm} \min_{d\le k\le \nminusw} \hspace{-0.1cm} \nu_p \bigl( \binom{\nminusw+d}{k} \bigr).
%%%%%$$
%%%%%Equivalently, by Kummer's theorem, for every $k\in[d,\nminusw]\triangleq \{d,d+1,\ldots,\nminusw\}$,
%%%%%the addition of $k$ and $\nminusw+d-k$ in base $p$ produces at least $\nu_p(\Phi_e(n,w))$ carries.
%%%%%\end{corollary}

The next lemma evaluates exactly the greatest common divisor in \cref{cor:interval_overlap}.
Besides proving that this gcd is square-free, it identifies its prime
divisors through prime powers lying in a specific given short interval.
Moreover, by \cref{cor:interval_overlap} the prime divisors of $\Phi_e(n,w)$
must divide this gcd and with no larger multiplicity for each prime divisor.
This will be used later to exclude the remaining radii for the existence of $e$-perfect codes in J$(n,w)$.
For consistency with later sections we will assume the $M=\nminusw +d$, although some of the results are more general.
Similarly, we keep $s_p$ as was used here for later sections.

\begin{lemma}
\label{lem:central-row-gcd}
If $M$ and $\ell$ are positive integers such that $M \ge 3\ell-1$, then
$$
\gcd_{\ell \le k\le M-\ell} \bigl( \binom{M}{k} \bigr)
= \hspace{-0.5cm}
\prod_{\substack{p\text{ prime,}\\ M-\ell<p^{s_p}\le M\\ \text{for some }s_p\ge 1}} \hspace{-0.7cm} p.
$$
In particular, this gcd is square-free.
\end{lemma}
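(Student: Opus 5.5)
The plan is to argue one prime at a time. Fix a prime $p$, set $g=\gcd_{\ell\le k\le M-\ell}\binom{M}{k}$, and show three things. (a) If some power $p^{s}$ lies in $(M-\ell,M]$, then $p\mid g$. (b) In that case $p^2\nmid g$. (c) If no power of $p$ lies in $(M-\ell,M]$, then $p\nmid g$. Together these give the product formula and square-freeness. All three use Kummer's theorem (\cref{thm:Kummer}) or Lucas' theorem (\cref{thm:Lucas}) on the base-$p$ expansion of $M$.

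For (a), let $s$ be maximal with $p^s\le M$, so $M<p^{s+1}$. Since $p^s$ is at least the given power, we have $M-\ell<p^s$. From $M\ge 3\ell-1$ we get $p^s>M-\ell\ge 2\ell-1$, so $p^s\ge 2\ell$. Hence $M<p^s+\ell<2p^s$, and $M=p^s+t$ with $0\le t<\ell$. For $\ell\le k\le M-\ell$, both $k$ and $M-k$ are at most $M-\ell<p^s$, while their sum $M$ is at least $p^s$. So adding $k$ and $M-k$ in base $p$ carries into the $p^s$-place, and Kummer gives $p\mid\binom{M}{k}$.

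For (b), keep the same $s$ and take $k=mp^{s-1}$ with $m=\lceil \ell/p^{s-1}\rceil$. Since $\ell\le p^s/2$, we have $m\le p-1$ when $p\ge3$ and $m=1$ when $p=2$. Because $p^s\ge 2\ell$, it follows that $k<\ell+p^{s-1}\le p^s-\ell+\ldots$, and more precisely $k\le M-\ell$ (for $p=2$, $k=2^{s-1}\le 2^s-\ell\le M-\ell$). Since $k$ has zero digits below the $p^{s-1}$-place, adding $k$ to $M-k$ produces no carries in the lower places. A carry can occur only out of the $p^{s-1}$-place, and $M<2p^s$ rules out any further carry. So $\nu_p\binom{M}{k}\le1$ and $p^2\nmid g$. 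The boundary inequalities in the choice of $m$ are where I expect to spend the most care.

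For (c), let $q=p^s$ be the largest power of $p$ that is at most $M$. By hypothesis $q\le M-\ell$ and $pq>M$, so $M=aq+t$ with $1\le a\le p-1$ and $0\le t<q$. By Lucas' theorem, $p\nmid\binom{M}{jq}$ for every $0\le j\le a$, so it suffices to find such a $jq$ in $[\ell,M-\ell]$. If $q\ge\ell$, take $j=1$. If $q<\ell$, take $j=\lceil \ell/q\rceil$. Then $jq<\ell+q\le \ell+(\ell-1)\le M-\ell$, using $M\ge3\ell-1$, and also $jq\le M$ forces $j\le a$. I expect this covering argument, which is where the hypothesis $M\ge 3\ell-1$ really enters, to be the main obstacle to state cleanly. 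Combining (a)–(c), $g$ is exactly the product of the primes $p$ having a power $p^{s_p}\in(M-\ell,M]$, each to the first power.
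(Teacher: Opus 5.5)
Your proof is correct and follows the paper's argument essentially step for step. It uses the same split on whether the largest power $p^s\le M$ exceeds $M-\ell$, the same witness $\lceil \ell/p^s\rceil p^s$ to show $p$ does not divide the gcd, and the same witness $\lceil \ell/p^{s-1}\rceil p^{s-1}$ to show the valuation is exactly one; using Lucas instead of Kummer in part (c) is only cosmetic. The one line to tidy is the bound $k\le M-\ell$ in (b), whose displayed chain is garbled but is easily finished as the paper does: if $\ell\le p^{s-1}$ then $k=p^{s-1}\le p^s-\ell\le M-\ell$, and otherwise $k<\ell+p^{s-1}<2\ell$, so $k\le 2\ell-1\le M-\ell$.
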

\begin{proof}
Let $p$ be a prime and $Q$ be the largest power of $p$ not exceeding $M$, i.e., $M \geq Q$.
It is easy to verify that there exists some $s_p\geq 1$ satisfying $ M-\ell<p^{s_p}\le M$ if and only if $Q> M-\ell$.
To prove the claim of the lemma we distinguish between two cases depending whether $Q\le M-\ell$ or $Q> M-\ell$.

\begin{enumerate}
\item First, suppose that $Q\le M-\ell$. We have to show that $p$ does not divide the gcd,
that is $p$ does not divide $\binom{M}{r}$ for some $\ell \le r\le M-\ell$.
If $Q\ge \ell$ then let $r=Q$ and hence $\ell \le r\le M-\ell$. If $Q<\ell$ then let
$r=\left\lceil \ell /Q\right\rceil \cdot Q$ and therefore $\ell \le r \le 2\ell-1\le M-\ell$.
Therefore, in both cases we have
$\ell \leq r\le M-\ell$ and $r=mQ$ for some $1 \leq m \leq p-1$.

Let $M=aQ+\rho$, where $0\le\rho<Q$ and $1\le a\le p-1$.
Since $r=mQ<M$, its coefficient $m$ in the $Q$-place is at most $a$. Hence, $M-r=aQ+\rho -mQ= (a-m)Q+\rho$ which
implies that there is no carry when $r$ is added to $M-r$. Therefore, by
Kummer's theorem it implies that
$$
p\nmid \binom{M}{r}.
$$
Thus, $p$ does not divide $\gcd_{\ell\le k\le M-\ell}(\binom{M}{k})$.

\item Next, suppose that $Q>M-\ell$. We claim that $p$ divides $\binom{M}{k}$ for
all $\ell \le k\le M-\ell$. Since $M \geq Q$ we also have
$$
M=Q+\rho,  \qquad  0\le\rho<\ell.
$$
%%%%%%%Since $M\ge 3\ell-1$, we have that $M/2 \geq \ell$ and hence $M-\ell \ge M/2$ which inplies that $Q>M/2$.
%No integer \(k\) in \([h,M-h]\) is a base-\(p\) subnumber of \(Q+\rho\): such a subnumber is either at most \(\rho<h\), or at least \(Q>M-h\).

By our definition and assumption on $\rho$ and $Q$, we have for any integer $k$ in $[\ell,M-\ell]$,
that $\rho< k< Q$ and $0 < M-k <Q$. Since $Q$ is a power of $p$ not exceeding~$M$, and $M-k <Q$,
it follows that adding $k$ to $M-k$ has at least one carry. Assume $Q$ has in base-$p$ its unique~$1$ at position $\tau$.
Then, there is a carry in the addition of $k$ to $M-k$ at position $\tau -1$.
Thus, by Kummer's theorem, every $\binom{M}{k}$ with $k$ in the interval $[\ell,M-\ell]$ is divisible by $p$
and the claim is proved.
\end{enumerate}

It remains to show that when $Q>M-\ell$, the minimum $p$-adic valuation in $\{\nu_{p} ( \binom{M}{k}) ~:~k\in [\ell,M-\ell]\} $ is one,
that is, there exists an $k\in [\ell,M-\ell]$ with $\nu_{p} ( \binom{M}{k})=1$.
Since $M\ge 3\ell-1$ and $Q > M-\ell$, we have that $Q\ge 2\ell$. Denote $B=Q/p$.
If $\ell \le B$,  let $k=B$. This implies that $\ell \le k=B=Q/p\le Q-B\le M-\ell$.
If $\ell>B$, let $k=\left\lceil \ell/B\right\rceil \cdot B$. Then $\ell \le k=\left\lceil \ell/B\right\rceil B\le 2\ell-1\le M-\ell$.
Therefore, in both cases, $\ell \le k\le M-\ell$ and $k=mB$ for some $1\le m\le p-1$.

Since $\rho<\ell \le k=mB$, we can analyse the number of carries when we add $k$ to $M-k$ as follows:
$Q$ has in base-$p$ exactly one~$1$ at position $\tau$ and hence $B=Q/p$ has exactly one~$1$ at position $\tau -1$
and $k=mB$ has exactly one nonzero entry at position $\tau -1$, where the value there is $m$.
When $k=mB$ is added to $M-k<Q$ we have a carry from position $\tau -1$ since the most significant digit of $k$ and $M-k$ is
less than $\tau$ and the most significant digit of $M \geq Q$ is~$\tau$. Since $k=mB$ has an $m$ only at position $\tau -1$,
it follows that this is the only carry when $k$ is added to $M-k$. Hence, by Kummer's theorem we have
$$
\nu_{p} \bigl( \binom{M}{k} \bigr) =1,
$$
which completes the proof of the lemma.
\end{proof}

Note that \cref{lem:central-row-gcd} is a general result, regardless if there exists an $e$-perfect code.
If there exists an $e$-perfect code in J$(n,w)$, then
apply \cref{lem:central-row-gcd} with $M=\nminusw+d$ and $\ell=d$. Since $\nminusw \ge w$ and $d\le\lceil w/2\rceil$
by Corollary~\ref{cor:more_str_bound}, one has $M\ge 3d-1$.
Therefore we have the following consequence.

\begin{corollary}
\label{cor:prime factors from chain divisitilities}
If there exists an $e$-perfect code in $\textup{J}(n,w)$, then
$$
\Phi_e(n,w) \big| \hspace{-0.6cm} \prod_{\substack{p\text{ prime,}\\ \nminusw<p^{s_p}\le \nminusw+d\\ \text{ for some }s_p\ge 1}} \hspace{-0.6cm}p.
$$
In particular, \(\Phi_e(n,w)\) is square-free.
% Moreover, for every prime $p$ that divides $\Phi_e(n,w)$, there exists an integer $s\ge 1$ such that
%$$
%\nminusw<p^s\le \nminusw+d.
%$$
\end{corollary}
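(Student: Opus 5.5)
The corollary follows by chaining \cref{cor:interval_overlap} with \cref{lem:central-row-gcd}, so the plan is to check that the lemma applies with the right parameters.

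First, let $d$ be the strength redundancy of $\cC$. By \cref{cor:interval_overlap}, $\Phi_e(n,w)$ divides $\gcd_{d\le k\le \nminusw}\binom{\nminusw+d}{k}$. Put $M=\nminusw+d$ and $\ell=d$. Then $M-\ell=\nminusw$, so this gcd is exactly $\gcd_{\ell\le k\le M-\ell}\binom{M}{k}$, the quantity evaluated in \cref{lem:central-row-gcd}.

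Next, I verify the hypothesis $M\ge 3\ell-1$, which amounts to $\nminusw\ge 2d-1$.
\begin{itemize}
\item For $e\ge 2$: \cref{cor:more_str_bound} gives $d\le\lceil w/2\rceil$, so $2d-1\le 2\lceil w/2\rceil-1\le w\le \nminusw$. This uses the standing assumption $n\ge 2w$.
\item For $e=1$: \cref{prop:Martin} gives $d=k+1$ and $\nminusw=w+\delta\ge w=kr+1$. Since $r\ge 2$ (otherwise the strength $k(r-1)$ would not be positive), $kr+1\ge 2k+1=2d-1$.
\end{itemize}
Both inequalities are elementary, and checking them for both ranges of $e$ is the only place where care is needed.

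Finally, \cref{lem:central-row-gcd} identifies the gcd as the product of those primes $p$ having some power $p^{s_p}$ in the interval $(M-\ell,M]=(\nminusw,\nminusw+d]$. This gives the stated divisibility. Because this product is square-free, any divisor of it, in particular $\Phi_e(n,w)$, is square-free as well.
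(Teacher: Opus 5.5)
Your proposal is correct and matches the paper's argument: apply \cref{lem:central-row-gcd} with $M=\nminusw+d$ and $\ell=d$ to the gcd from \cref{cor:interval_overlap}, and get $M\ge 3d-1$ from $\nminusw\ge w$ and $d\le\lceil w/2\rceil$. Your separate check of $e=1$ via \cref{prop:Martin} is a useful addition, since the paper justifies $M\ge 3d-1$ only through \cref{cor:more_str_bound}, which is stated for $e\ge 2$.
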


It worth mentioning that \cref{cor:prime factors from chain divisitilities} also implies the result of \cref{thm:sufficient condition}
that the $\Phi_e(n,w)$ is square-free. Moreover, contrary to \cref{thm:sufficient condition}, \cref{cor:prime factors from chain divisitilities}
holds also for $e \geq 1$.

\section{The Algebra of the Johnson Scheme}
\label{sec:JohnsonAlgebra}

In this section we present a short introduction to association schemes with the emphasis of the Johnson scheme.
The most important concept is
the Lloyd polynomials~\cite{Llo57} that are heavily linked with association schemes
and whose roots play an important role in the nonexistence of perfect codes.
The quoted results and the introduction are taken from~\cite{Big73,Del73} and the excellent
presentation in~\cite[Chapter 21]{McSl77}.
Each reference has a slightly different notation which is translated to our notation.

\begin{defn}
The Johnson scheme \textup{J}$(n,w)$ has three types of intersection numbers:
\begin{itemize}
\item $a_i$, $1 \leq i \leq w$, is the number of elements $\bldz \in \binom{[n]}{w}$ such that $d(\bldu,\bldz)=1$ and $d(\bldv,\bldz)=i$,
where $d(\bldu,\bldv)=i$.

\item $b_i$, $1 \leq i \leq w-1$, is the number of elements $\bldz \in \binom{[n]}{w}$ such that $d(\bldu,\bldz)=1$ and $d(\bldv,\bldz)=i+1$,
where $d(\bldu,\bldv)=i$.

\item $c_i$, $1 \leq i \leq w$, is the number of elements $\bldz \in \binom{[n]}{w}$ such that $d(\bldu,\bldz)=1$ and $d(\bldv,\bldz)=i-1$,
where $d(\bldu,\bldv)=i$.
\end{itemize}
Moreover, $a_i$, $b_i$, and $c_i$ are independent of the choices of $u$ and $v$ as long as $d(\bldu,\bldv)=i$.
\end{defn}

It is easy to verify that $a_i =i(n-2i)$ for each $1 \leq i \leq w$; $b_i=(w-i)(\nminusw-i)$
for each $1 \leq i \leq w-1$; $c_i = i^2$ for each $1 \leq i \leq w$.
Furthermore, let $a_0 =1$ and $b_0=w \nminusw$.
Since there are exactly $w \nminusw$ elements at distance one from each $\bldz \in \binom{[n]}{w}$ we
also have that $a_i+b_i+c_i = w \nminusw$ for each $1 \leq i \leq w-1$ and also $a_0 + b_0 = w \nminusw$ and $a_w + c_w = w \nminusw$.

Let $A_i$, $0 \leq i \leq w$, be the distance-$i$ adjacency matrix, where $A_0$ is the $\nbinw \times \nbinw$
identity matrix, $A=A_1$, and
$$
(A_i)_{\bldx,\bldy} =
\begin{cases}
    1  & \text{if} ~ d(\bldx,\bldy)=i, \\
    0 & \text{otherwise}.
\end{cases}
$$

%%%%The same Johnson recurrence, written for the Eberlein polynomials, is Delsarte's equation~(4.35), following his explicit description
%%%%of the Johnson eigenmatrices in \cite[Theorem~4.6 and Eq.~(4.35)]{Del73}.
%%%%Comparing leading terms in \cref{eq:section5-three-term-recurrence} and
%%%%using \(c_{i+1}=(i+1)^2\) shows inductively that $v_i(x)$ has degree $e$ and leading coefficient $(i!)^{-2}$.

The preceding definitions yield a recurrence with the distance-$i$ adjacency matrices.
Indeed, the $(\bldx,\bldy)$-entry of $A \cdot A_i$ is the number of neighbors $\bldu$ of $\bldx$ satisfying $d(\bldu,\bldy)=i$.
 By the triangle inequality this number can be nonzero only when $d(\bldx,\bldy)\in\{i-1,i,i+1\}$,
 and the three cases give $b_{i-1},a_i,c_{i+1}$, respectively.  Hence
\begin{equation}
\label{eq:section5-matrix-three-term-recurrence}
A \cdot A_i=b_{i-1}A_{i-1}+a_iA_i+c_{i+1}A_{i+1}, \qquad 1\le i<w.
\end{equation}
This is the tridiagonal multiplication rule for the distance matrices (see Biggs~\cite[p. 292]{Big73}),
where the \emph{eigenvector sequence} of polynomials was defined as follows.
$$
v_0(x) \triangleq 1,\qquad v_1(x) \triangleq x,
$$
and, recursively for $1\le i<w$, Biggs~\cite[p. 191]{Big73} defined
\begin{equation}
\label{eq:section5-three-term-recurrence}
v_{i+1}(x) \triangleq \frac{(x-a_i)v_i(x)-b_{i-1}v_{i-1}(x)}{c_{i+1}},
\end{equation}
or equivalently
$$
xv_i(x)=b_{i-1}v_{i-1}(x)+a_iv_i(x)+c_{i+1}v_{i+1}(x).
$$
The matrix identity
\begin{equation}
\label{eq:section5-distance-polynomial-matrices}
A_i=v_i(A), \qquad 0\le i\le w
\end{equation}
follows immediately by induction. Indeed, it is true for $i=0,1$, since $A_0=I=v_0(A)$ and $A_1=A=v_1(A)$.
If it is satisfied for $i-1$ and $i$, then \cref{eq:section5-matrix-three-term-recurrence} gives
$$
A_{i+1}=\frac{(A-a_iI)A_i-b_{i-1}A_{i-1}}{c_{i+1}}
=\frac{(A-a_iI)v_i(A)-b_{i-1}v_{i-1}(A)}{c_{i+1}}=v_{i+1}(A).
$$
This identity $A_i=v_i(A)$ was also proved by Biggs~\cite[p.~292]{Big73}.
Comparing the leading coefficients in \cref{eq:section5-three-term-recurrence},
and since $c_{i+1}=(i+1)^2$, the following parameters are also implied:
\begin{equation}
\label{eq:section5-vi-degree-leading}
\deg v_i(x)=i,\qquad \operatorname{leading~coefficient}~(v_i (x))=\frac1{(i!)^2}, \qquad 0\le i\le w.
\end{equation}
For the Johnson scheme the following polynomial, known as Eberlein polynomial is required, and defined in the following proposition.
\begin{proposition}[Theorem~4.6 and Eq.~(4.35) of \cite{Del73}, Theorem 10, p. 665 in~\cite{McSl77}]
\label{prop:quoted-delsarte-eigenmatrix}
The Eberlein polynomial for \textup{J}$(n,w)$ are defined by
$$
E_k(x)=\sum_{j=0}^{k}(-1)^j\binom{x}{j}\binom{w-x}{k-j}\binom{\nminusw-x}{k-j}.
$$
The associated eigenvalues of the related eigenmatrices $P$ and $Q$ of the scheme
(defined by their eigenvalues $p_k(i)$ and $q_k(i)$ as given in~\cite[p. 654]{McSl77} and in the sequel) are:
$$
p_k(i)=E_k(i),\qquad q_k(i)=\mu_i v_k^{-1}E_k(i),\qquad 0\le i,k\le n,
$$
where $v_i =\binom{w}{i} \binom{\nminusw}{i}$ is the number of verices at distance $i$ from a given vertex and
$\mu_i = \frac{n-2i+1}{n-i+1} \binom{n}{i}$ is the multiplicity of the eigenvalue $p_k (i)$ of $A_k$.
%%Immediately after the theorem Delsarte gives the three-term recurrence
%%\[(k+1)^2E_{k+1}(u)=\bigl(n(v-n)-k(v-2k)-u(v+1-u)\bigr)E_k(u)
%%-(n-k+1)(v-n-k+1)E_{k-1}(u).\]
\end{proposition}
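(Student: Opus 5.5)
The approach is to realize the eigenspaces of the Bose--Mesner algebra of J$(n,w)$ explicitly, compute the eigenvalue of each $A_k$ on them by a direct count, and then obtain the $Q$-eigenvalues from the standard duality between the two eigenmatrices.

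First I will describe the common eigenspaces. Let $V=\mathbb{R}^{\nbinw}$. For $0\le i\le w$, let $W_i\subseteq V$ be the span of the functions $X\mapsto |X\supseteq T|$ (indicator that $X$ contains $T$), where $T$ runs over the $i$-subsets of $[n]$. These give a filtration $W_0\subseteq W_1\subseteq\cdots\subseteq W_w=V$. Set $V_i=W_i\cap W_{i-1}^{\perp}$. Each $W_i$ is invariant under every $A_k$, because $A_k$ commutes with the action of the symmetric group on $[n]$. I will use the standard fact (the rank of the inclusion matrix of $i$-subsets versus $w$-subsets is $\binom{n}{i}$ for $i\le w\le n-i$) to get $\dim V_i=\binom{n}{i}-\binom{n}{i-1}=\frac{n-2i+1}{n-i+1}\binom{n}{i}=\mu_i$. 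Since $V_i$ is an irreducible module of the symmetric group, the Specht module $S^{(n-i,i)}$, Schur's lemma makes each $A_k$ act on $V_i$ as a scalar. Thus $V=\bigoplus_i V_i$ is the decomposition into common eigenspaces, and $\mu_i$ is the multiplicity, as claimed.

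Next I will compute the scalar $p_k(i)$ by evaluating $A_k$ on one explicit vector. Choose $i$ disjoint pairs $(a_1,b_1),\ldots,(a_i,b_i)$ in $[n]$ and define
$$
f(X)=\prod_{j=1}^{i}\bigl([a_j\in X]-[b_j\in X]\bigr).
$$
Expanding the product shows $f\in W_i$. Any $g$ that depends on fewer than $i$ pairs is annihilated by the antisymmetry in each pair, which gives $f\perp W_{i-1}$. Hence $f\in V_i$ and $f\neq 0$.

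Now take $X_0\supseteq\{a_1,\ldots,a_i\}$ with $b_j\notin X_0$ for all $j$, so $f(X_0)=1$. A set $Y$ with $d(X_0,Y)=k$ has $f(Y)\neq0$ only if $Y$ contains exactly one element of each pair. Suppose $Y$ swaps $a_j$ for $b_j$ in exactly $u$ of the pairs. There are $\binom{i}{u}$ ways to choose these pairs, and each such $Y$ has $f(Y)=(-1)^u$. Then $Y$ must remove $k-u$ further elements from the $w-i$ remaining elements of $X_0$. It must also add $k-u$ elements from outside $X_0$ other than the $b_j$'s; there are $\nminusw-i$ such elements. Summing over $u$ gives
$$
(A_kf)(X_0)=\sum_{u=0}^{k}(-1)^u\binom{i}{u}\binom{w-i}{k-u}\binom{\nminusw-i}{k-u}=E_k(i),
$$
which proves $p_k(i)=E_k(i)$.

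For $q_k(i)$ I will invoke the general orthogonality relations of a symmetric association scheme, namely $PQ=\binom{n}{w}I$ together with $\mu_j\,p_k(j)/v_k=q_j(k)$, in the paper's indexing convention (\cite[Ch.~21, Thm.~3]{McSl77}). Substituting $p_k(i)=E_k(i)$ then gives the stated formula after matching indices. The main obstacle is bookkeeping rather than substance. I must check the dimension count $\mu_i$, which requires the rank of the inclusion matrix and hence $w\le n/2$; this is our standing assumption. I must also reconcile the paper's index order for $q_k(i)$ with the convention in \cite{McSl77}, and I will verify both at $i=0$, where $E_k(0)=v_k$, and at $k=0$.
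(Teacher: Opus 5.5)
The paper gives no proof of this proposition; it quotes it from Delsarte and MacWilliams--Sloane. Your self-contained route is therefore genuinely different, and its first half is correct. Your vector $f=\prod_j([a_j\in X]-[b_j\in X])$ does lie in $W_i\cap W_{i-1}^{\perp}$: each spanning vector $[T'\subseteq X]$ of $W_{i-1}$, with $|T'|=i-1$, misses some pair and is fixed by the transposition $(a_j\,b_j)$, which negates $f$. Your count of $(A_kf)(X_0)$ is right, so $p_k(i)=E_k(i)$.

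Two points need tightening.
\begin{itemize}
\item Commuting with $S_n$ does not by itself make $W_i$ invariant. The clean fix is that the Bose--Mesner algebra is the full commutant of $S_n$, since the $S_n$-orbits on pairs are the distance classes. Its commutativity then makes $V$ multiplicity-free with exactly $w+1$ irreducible constituents. Since your $f$ shows every $V_i\neq 0$, the $V_i$ are precisely these constituents. This gives invariance and the scalar action at once, with no appeal to Specht modules.
\item To identify $V_i$ with the range of $J_i$, you need distinct $i$ to give distinct eigenvalue vectors. Your formula supplies this at $k=1$: $E_1(i)=(w-i)(\nminusw-i)-i$ is strictly decreasing in $i$.
\end{itemize}

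The real problem is the final step. Under the paper's convention $J_j=\binom{n}{w}^{-1}\sum_i q_j(i)A_i$, the identity $\mathrm{tr}(A_kJ_j)=\mu_j p_k(j)=v_k q_j(k)$ gives exactly the relation you quote. Substituting $p_k(i)=E_k(i)$ yields $q_i(k)=\mu_i v_k^{-1}E_k(i)$, equivalently $q_k(i)=\mu_k v_i^{-1}E_i(k)$. This is not the displayed formula, and no matching of indices turns one into the other.

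In fact the displayed formula is false, as the checks you planned would reveal:
\begin{itemize}
\item The diagonal entries of $J_k$ equal $\mu_k/\binom{n}{w}$, so $q_k(0)=\mu_k$. The displayed formula gives $\mu_0v_k^{-1}E_k(0)=1$.
\item $J_0=\binom{n}{w}^{-1}J$ forces $q_0(i)=1$. The displayed formula gives $\mu_i$.
\end{itemize}
So you should not assert the stated $q$-formula. What your argument proves, and what is true, is $q_i(k)=\mu_iv_k^{-1}E_k(i)$; the statement has the two indices of $q$ transposed. The range should also be $0\le i,k\le w$, not $n$. The slip is harmless for the rest of the paper, which uses only $p_1(j)=\theta_j$ and the projection form of $\cD_j^*$, never the explicit entries of $Q$.
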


The Bose--Mesner~\cite{BoMe59} algebra of the Johnson scheme is the real algebra
spanned by $A_0,\ldots,A_w$. The matrices $A_i$ are real symmetric and commute and hence
this algebra has a unique set of mutually orthogonal primitive idempotents $J_0,\ldots,J_w$;
equivalently, the $J_i$s are the orthogonal projections onto the common eigenspaces of the matrices $A_j$s. They satisfy
$$
J_i^2=J_i,\qquad J_k J_i=0\ (k\ne i),\qquad \sum_{i=0}^w J_i=I,\qquad J_0=\frac1{\nbinw}{J},
$$
where $J$ is the all-one matrix (not to be confused with the $J_i$s).
We define $p_i(j)$ to be the eigenvalue of $A_i$ with the idempotent $J_j$, i.e.,
\begin{equation*}
%\label{eq:section5-first-eigenmatrix-convention}
A_i J_j=p_i(j) J_j,\qquad A_i=\sum_{j=0}^w p_i(j)J_j.
\end{equation*}
Thus $P=(p_i(j))$ is the first eigenmatrix. Since $A_i=v_i(A)$ we have that
$$
p_i(j)=v_i(\theta_j),\qquad \theta_j \triangleq p_1(j).
$$
The explicit Johnson eigenmatrix formula gives
\begin{equation}
\label{eq:section5-Johnson-adjacency-eigenvalues}
\theta_j=(w-j)(\nminusw-j)-j=w \nminusw-j(n+1-j),\qquad 0\le j\le w.
\end{equation}
These eigenvalues are distinct: for $0\le j<w$, and since $n \geq 2w$ we have
$$
\theta_j-\theta_{j+1}=n-2j>0.
$$
The second eigenmatrix is defined by the eigenvalues $q_j(i)$ as follows
\begin{equation}
\label{eq:section5-second-eigenmatrix-convention}
J_j=\frac1{\nbinw}\sum_{i=0}^w q_j(i)A_i.
\end{equation}
The matrix $Q=(q_j(i))$ with these entries is the second eigenmatrix.

Let $\chi\in\{0,1\}^{\Omega}$ be the characteristic column vector of an $e$-perfect code $\cC$ in J$(n,w)$, and define
$$
\cD_i \triangleq \frac1{|\cC|}\chi^{T}A_i\chi,
\qquad
\cD_j^* \triangleq  \sum_{i=0}^w\cD_i q_j(i).
$$
The Delsarte's inner distribution $\cD_i$ is the average number of codewords at distance $i$ from a codeword.
With this convention in
\cref{eq:section5-second-eigenmatrix-convention}, $\cD_j^*$ is the $j$th coordinate
of the dual distribution. Moreover,
\begin{equation}
\label{eq:section5-dual-distribution-projection}
\cD_j^*=\frac{\nbinw}{|\cC|}\chi^T J_j\chi \geq 0 .
%=\frac{\nbinw}{|\cC|}\|J_j\chi\|^2\ge0.
\end{equation}
Hence
$$
\cS \triangleq \{j\in\{1,\ldots,w\}:\cD_j^*>0\}  %=\{j\in\{1,\ldots,w\}:E_j\chi\ne0\}
$$
is precisely the nonprincipal spectral support of the code.
These definitions and the equivalence in \cref{eq:section5-dual-distribution-projection}
are exactly the specialization of the criterion of the following proposition.

\begin{proposition} [Equation (3.27), Theorem~3.10, and Theorem~4.7 of \cite{Del73}]
\label{prop:quoted-delsarte-design}
For $T\subseteq\{1,\ldots,w\}$,
a nonempty subset $\cC$ in \textup{J}$(n,w)$ is called a $T$-design when its distribution $\cD=(\cD_0,\cD_1,\ldots,\cD_w)$ satisfies
$$
\sum_i \cD_iq_k(i)=0\qquad\text{for every }k\in T.
$$
If $J_0,J_1,\ldots,J_n$ are the minimal idempotents of the Bose--Mesner algebra and $\chi_\cC$ is the characteristic vector of $\cC$,
then $\cC$ is a \(T\)-design if and only if
$$
J_k\chi_\cC=0\qquad\text{for every }k\in T.
$$
If $T=\{1,2,\ldots,t\}$ with $1\le\tau\le w$, a code $\cC$ is a $T$-design
in $\textup{J}(n,w)$ if and only if it forms a $t$-design $S_\lambda(t,w,n)$ for some $\lambda$.
\end{proposition}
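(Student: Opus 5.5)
The plan is to handle the two assertions of \cref{prop:quoted-delsarte-design} separately. The first is a positivity computation in the Bose--Mesner algebra. The second identifies the eigenspaces of the scheme with the spaces spanned by the $i$-subset incidence matrices.

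\textbf{Step 1: $T$-designs and projections.} Let $N=\nbinw$. Substitute $J_k=\frac1N\sum_i q_k(i)A_i$ from \cref{eq:section5-second-eigenmatrix-convention} into $\chi^TJ_k\chi$. Using $\chi^TA_i\chi=|\cC|\cD_i$, this gives
$$
\sum_i\cD_iq_k(i)=\frac{N}{|\cC|}\chi^TJ_k\chi .
$$
Since $J_k$ is a real symmetric idempotent, $\chi^TJ_k\chi=\chi^TJ_k^TJ_k\chi=\|J_k\chi\|^2$. Hence the $k$th constraint $\sum_i\cD_iq_k(i)=0$ holds if and only if $J_k\chi=0$. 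This also proves the inequality in \cref{eq:section5-dual-distribution-projection}.

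\textbf{Step 2: reduction to inclusion matrices.} For $0\le i\le w$, let $W_i$ be the $\binom{n}{i}\times N$ incidence matrix with $(W_i)_{X,B}=1$ exactly when the $i$-subset $X$ is contained in the block $B$. Let $R_i$ be the row space of $W_i$. Any $(i-1)$-subset lies in exactly $n-i+1$ of the $i$-subsets, so
$$
W_{i-1}=\frac{1}{w-i+1}\,W_{i-1,i}W_i ,
$$
where $W_{i-1,i}$ is the inclusion matrix between $(i-1)$-subsets and $i$-subsets. Consequently the spaces $R_i$ are nested, and $R_0$ is spanned by the all-one vector $\mathbf 1$.

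The set $\cC$ is a $t$-design exactly when $W_t\chi=\lambda\mathbf 1$ for some $\lambda$. Counting incidences gives $\lambda=|\cC|\binom{w}{t}/\binom{n}{t}$, and $W_t\mathbf 1=\binom{w-t}{0}\cdots$ is a constant vector. It follows that $\cC$ is a $t$-design if and only if
$$
\chi-\frac{|\cC|}{N}\mathbf 1\ \perp\ R_t .
$$
The projection of $\chi$ onto $R_0$ is $J_0\chi=\frac{|\cC|}{N}\mathbf 1$. So the whole statement reduces to the claim $R_t=\operatorname{Im}J_0\oplus\cdots\oplus\operatorname{Im}J_t$, with the eigenspaces indexed as in \cref{eq:section5-Johnson-adjacency-eigenvalues}.

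\textbf{Step 3: identifying $R_t$ (main obstacle).} Each $R_i$ is invariant under the action of $S_n$ on coordinates. The permutation module on $w$-subsets is multiplicity-free. Its irreducible constituents have dimensions $\binom{n}{j}-\binom{n}{j-1}$ for $0\le j\le w$, and these constituents are exactly the common eigenspaces $\operatorname{Im}J_j$, because the Bose--Mesner algebra is the centralizer algebra of this action.

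By the Gottlieb--Kantor theorem, $W_i$ has full row rank $\binom{n}{i}$ when $i\le w\le n-i$. Hence each $R_i$ is an $S_n$-submodule of dimension $\binom{n}{i}$. Since the $R_i$ are nested, $R_i/R_{i-1}$ has dimension $\binom{n}{i}-\binom{n}{i-1}$. Inductively, $R_i$ is the sum of $R_{i-1}$ and one new irreducible constituent.

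The delicate point is to check that the new constituent is precisely the eigenspace of $A$ with eigenvalue $\theta_i$. The plan is to do this by computing how $A$ acts on the rows of $W_i$. Writing $A$ through the inclusion relation should give
$$
W_iA=\bigl((w-i)(\nminusw-i)-i\bigr)W_i+(\text{terms with rows in }R_{i-1}).
$$
Modulo $R_{i-1}$, $A$ then acts on $R_i$ as the scalar $\theta_i=w\nminusw-i(n+1-i)$. This identifies $R_i/R_{i-1}$ with $\operatorname{Im}J_i$, and therefore $R_t=\bigoplus_{j\le t}\operatorname{Im}J_j$.

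With this identification, Step 2 shows that $\cC$ is a $t$-design if and only if $J_j\chi=0$ for $1\le j\le t$. By Step 1, this is the $\{1,\ldots,t\}$-design condition.
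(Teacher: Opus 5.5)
The paper gives no proof of this proposition. It is quoted from Delsarte, and the only piece the paper spells out is the identity $\cD_j^*=\frac{\binom{n}{w}}{|\cC|}\chi^TJ_j\chi\ge 0$ of \cref{eq:section5-dual-distribution-projection}, which is your Step~1. Your Steps~2--3 are therefore an independent proof of the Johnson-specific part (Delsarte's Theorem~4.7), and they are correct.

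The identity you flag as the main obstacle does hold. Let $U_{X,B}=1$ if $|X\setminus B|=1$ and $0$ otherwise. Direct counting gives $W_iA=(w-i)\nminusw\,W_i+(w-i+1)U$ and $W_{i-1,i}^TW_{i-1}=iW_i+U$. Hence
$$
W_iA=\theta_iW_i+(w-i+1)\,W_{i-1,i}^TW_{i-1},\qquad \theta_i=(w-i)\nminusw-i(w-i+1)=(w-i)(\nminusw-i)-i .
$$
The same identity at level $i-1$ shows that $R_{i-1}$ is $A$-invariant. Since $A$ is symmetric, $R_i\ominus R_{i-1}$ is then $A$-invariant, and $A$ acts on it as $\theta_i$. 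Because the $\theta_j$ are distinct, this space lies in $\operatorname{Im}J_i$. Equality follows from $\dim(R_i\ominus R_{i-1})=\binom{n}{i}-\binom{n}{i-1}=\mu_i$.

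Your sentence ``inductively, $R_i$ is $R_{i-1}$ plus one new irreducible constituent'' should come after this computation, not before it. Dimensions alone cannot identify the new piece: for $n=6$, $w=3$ one has $\mu_1=\mu_3=5$. Once the computation is done, multiplicity-freeness is not needed beyond knowing $\dim\operatorname{Im}J_i$.

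There are two small slips:
\begin{itemize}
\item The count behind $W_{i-1}=\frac{1}{w-i+1}W_{i-1,i}W_i$ is that an $(i-1)$-subset of a block lies in $w-i+1$ of that block's $i$-subsets, not $n-i+1$.
\item $W_t\mathbf 1=\binom{n-t}{w-t}\mathbf 1$. This matches $\lambda$ because $\binom{n-t}{w-t}/\binom{n}{w}=\binom{w}{t}/\binom{n}{t}$.
\end{itemize}

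Compared with the bare citation, your route imports the Gottlieb--Kantor rank theorem and the multiplicities $\mu_j$. In return it yields the structural fact $R_t=\bigoplus_{j\le t}\operatorname{Im}J_j$. It also confirms that labeling the idempotents by the eigenvalues $\theta_j$ of \cref{eq:section5-Johnson-adjacency-eigenvalues} is exactly the labeling under which $\{1,\ldots,t\}$-designs are $t$-designs. The paper relies on this compatibility in \cref{lem:section5-Lloyd-factorization}, where $j_1=w-d+1$ and the roots are $\theta_{w-h_i}$, without ever checking it.
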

%%%%%We finally spell out how the ordinary design strength is encoded by this dual distribution.
%%%%%A family \(\mathcal C\subseteq\binom{[n]}w\) is a \(t\)-\((n,w,\lambda_t)\) design
%%%%%if every \(t\)-subset of \([n]\) lies in exactly \(\lambda_t\) members of \(\mathcal C\).
%%%%%Its \emph{strength} \(\varphi\) is the largest \(t\) for which this holds.
The statements collected in \cref{prop:quoted-delsarte-design} give, in the present notation,
$$
\cC\text{ is a }t\text{-design}\quad\Longleftrightarrow\quad
\cD_1^*=\cdots=\cD_t^*=0.
$$
Consequently, if the strength of $\cC$ is exactly $\varphi=w-d$, then
\begin{equation}\label{eq:section5-exact-strength-dual-support}
\cD_j^*=0, \quad 1\le j\le w-d, \qquad \cD_{w-d+1}^*>0.
\end{equation}
The strict inequality in the second relation follows from the nonnegativity in \cref{eq:section5-dual-distribution-projection}.

We use the radius-$e$ sum polynomial, called the Lloyd's polynomials~\cite{Llo57}, originally defined for the Hamming scheme
(when Krawtchouck polynomials~\cite[p. 657]{McSl77} are used instead of the Eberlein polynomials):
\begin{equation}
\label{eq:section5-Lloyd-definition}
L_e(x) \triangleq \sum_{i=0}^e v_i(x).
\end{equation}

The \emph{external distance} of a code, denoted by $r_\text{ext}$ is the number
of nonzero coefficients in the distance distribution of its dual code.
\begin{proposition}
[Theorem~5.7 of \cite{Del73}]
\label{prop:quoted-delsarte-Lloyd}
If $\cC$ is an $e$-perfect code in \textup{J}$(n,w)$, then its external distance
is $r_\text{ext}=e$. If $\cD$ is the distance distribution of $\cC$, then the sum polynomial
$$
L_e(x)=v_0(x)+v_1(x)+\cdots+v_e(x)
$$
vanishes at $e$ distinct points $x_k$, $1 \le k \le e$, such that $\sum_i \cD_iq_k(i) \neq 0$.
\end{proposition}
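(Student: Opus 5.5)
The plan is to translate perfectness into a polynomial identity in the Bose--Mesner algebra, project it onto the primitive idempotents, and compare the number of spectral components with the degree of $L_e$.

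\textbf{Upper bound.} Let $\chi$ be the characteristic vector of $\cC$ and $\mathbf{j}$ the all-one vector. Since the $e$-balls around codewords partition J$(n,w)$, we have
$$
\sum_{i=0}^e A_i\chi=\mathbf{j}.
$$
By \cref{eq:section5-distance-polynomial-matrices} this says $L_e(A)\chi=\mathbf{j}$. Multiply by $J_k$ for $1\le k\le w$. Since $J_k\mathbf{j}=0$ for $k\ge1$ and $L_e(A)J_k=L_e(\theta_k)J_k$, we get
$$
L_e(\theta_k)\,J_k\chi=0 .
$$
Hence for every $k\in\cS$, i.e.\ with $J_k\chi\neq0$, equivalently $\cD_k^*=\sum_i\cD_iq_k(i)>0$ by \cref{eq:section5-dual-distribution-projection}, we must have $L_e(\theta_k)=0$. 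The $\theta_k$ are distinct, and $\deg L_e=e$ by \cref{eq:section5-vi-degree-leading}, with $L_e$ nonzero. Therefore $|\cS|\le e$, and the external distance $r_{\text{ext}}=|\cS|$ is at most $e$.

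\textbf{Lower bound.} This is the step needing an actual argument. Put $s=|\cS|$ and $f(x)=\prod_{k\in\cS}(x-\theta_k)$. Since $\chi=J_0\chi+\sum_{k\in\cS}J_k\chi$, we obtain
$$
f(A)\chi=f(\theta_0)J_0\chi=f(\theta_0)\frac{|\cC|}{\binom{n}{w}}\,\mathbf{j}.
$$
This vector is constant and positive, because $\theta_0>\theta_k$ for all $k\ge1$ and so $f(\theta_0)>0$.

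On the other hand, $f(A)$ is a polynomial of degree $s$ in $A$. Using $A_i=v_i(A)$ with $\deg v_i=i$, it is a linear combination of $A_0,\dots,A_s$. So $(f(A)\chi)_{\bldx}$ can be nonzero only if $\bldx$ lies within distance $s$ of some codeword. Thus every vertex is within distance $s$ of $\cC$, i.e.\ the covering radius is at most $s$.

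For a nontrivial $e$-perfect code the covering radius is exactly $e$. The $e$-ball of a codeword contains vertices at distance exactly $e$ (as $e\le w$), and these lie in no other ball. Hence $e\le s$, so $s=e$.

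\textbf{Conclusion.} Combining the two bounds, the $e$ distinct numbers $x_k=\theta_k$, $k\in\cS$, are exactly the roots of $L_e$. These are precisely the indices with $\sum_i\cD_iq_k(i)\neq0$, which proves the statement.

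The main obstacle is the lower bound $s\ge e$, specifically the claim that the covering radius is exactly $e$. It relies on nontriviality of the code and on $e\le w$.
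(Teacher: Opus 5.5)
Your proof is correct. Its first half matches what the paper does right after quoting the proposition. Projecting $L_e(A)\chi=\mathbf 1$ onto $J_j$ gives $L_e(\theta_j)J_j\chi=0$, so each $j\in\cS$ supplies a distinct root $\theta_j$ of $L_e$, and hence $|\cS|\le e$.

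The paper, however, never proves the reverse inequality. To conclude $r_{\text{ext}}=|\cS|=e$ in \cref{eq:external_support} it simply invokes the cited theorem of Delsarte. Your second half fills this gap with the annihilator polynomial $f(x)=\prod_{k\in\cS}(x-\theta_k)$. The vector $f(A)\chi$ is a nonzero multiple of the all-one vector, while $f(A)$ lies in the span of $A_0,\dots,A_s$. So every vertex is within distance $s$ of $\cC$, and perfectness, which forces covering radius exactly $e$, gives $s\ge e$. This is essentially Delsarte's own route (covering radius at most external distance), so your version makes the statement self-contained rather than quoted.

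Two minor remarks. First, you only need $f(\theta_0)\neq 0$, which already follows from the distinctness of the $\theta_j$; positivity is not required. Second, the only ``nontriviality'' you actually use is $e\le w$. This guarantees that a vertex at distance exactly $e$ from a codeword exists and that $L_e$ is defined.
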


By \cref{eq:section5-vi-degree-leading}, $L_e (x)$ has degree $e$ and leading coefficient $(e!)^{-2}$.
For an $e$-perfect code, the radius-\(e\) balls partition $\nbinw$, and therefore
$$
(A_0+\cdots+A_e)\chi=\mathbf 1,
$$
where $\mathbf 1$ is the all-one column vector.  Using \cref{eq:section5-distance-polynomial-matrices}, this is equivalent to
\begin{equation}
\label{eq:section5-perfect-code-Lloyd-equation}
L_e(A)\chi=\mathbf 1.
\end{equation}
If $j\ge 1$, then $J_j\mathbf1=0$, since $J_0$ is the orthogonal projection onto the one-dimensional space spanned by $\mathbf1$.
Moreover \(AJ_j=\theta_jJ_j\), and hence for every appropriate polynomial $f(x)$ one has $f(A)J_j=f(\theta_j)J_j$.
Applying \(J_j\) to \cref{eq:section5-perfect-code-Lloyd-equation} with $f(x)=L_e(x)$ implies that
\begin{equation}
\label{eq:section5-Lloyd-root-support}
L_e(\theta_j)J_j\chi=0.
\end{equation}
Thus every $j\in\mathcal S$ supplies a zero $\theta_j$ of $L_e(x)$.
By \cref{prop:quoted-delsarte-Lloyd} there are exactly $e$ such indices and hence
\begin{equation}
\label{eq:external_support}
r_\text{ext} = |\cS| =e.
\end{equation}

%For a general code in association scheme this is summarized in the following proposition.
%\begin{proposition}[Delsarte's external-distance inequality, Eq.~(5.16) of \cite{Del73}]
%\label{prop:quoted-delsarte-external-distance}
%Let $\cC$ be a nontrivial code in a metric association scheme. If $d_\text{min}$ is its minimum distance and $r_\text{ext}$ is
%its external distance, then
%$$
%r_\text{ext} \ge \left\lfloor\frac{d_\text{min}-1}{2}\right\rfloor.
%$$
%\end{proposition}

%%%%%%This is precisely the perfect-code conclusion in Delsarte's generalized Lloyd theorem~\cite[Theorem~5.7(ii), Eqs.~(5.19)--(5.22),
%%%%%% pp.~63--64]{delsarte1973algebraic}; the argument above is included so that no change of notation is left implicit.

\begin{lemma}
\label{lem:section5-Lloyd-factorization}
Assume that there exists an $e$-perfect code in \textup{J}$(n,w)$.
The polynomial $L_e (x)$ in \cref{eq:section5-Lloyd-definition} satisfies
$$
L_e(w \nminusw)=\sum_{i=0}^e\binom{w}{i}\binom{\nminusw}{i}=\Phi_e(n,w).
$$
Moreover, there are $e$ distinct integers \(0\le h_1<\cdots<h_e=d-1\) such that, with
$$
\lambda_i=h_i(h_i+\delta+1)-w,
$$
one has
\begin{equation}
\label{eq:section5-Lloyd-factorization}
(e!)^2L_e(x)=\prod_{i=1}^e(x-\lambda_i),\qquad (e!)^2\Phi_e(n,w)=\prod_{i=1}^e(w-h_i)(\nminusw+h_i+1).
\end{equation}
\end{lemma}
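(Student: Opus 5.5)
The plan has three steps: compute $L_e$ at the trivial eigenvalue, identify the $e$ nontrivial roots with the eigenvalues $\theta_j$ for $j\in\cS$, and then rewrite the factorization in the variables $h_i$.

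\textbf{Step 1: the value at $w\nminusw$.} The largest eigenvalue of $A$ is $\theta_0=w\nminusw$, with idempotent $J_0$. Since $A_i J_0=v_i J_0$, where $v_i=\binom{w}{i}\binom{\nminusw}{i}$ is the valency, and $A_i=v_i(A)$, we get $v_i(\theta_0)=\binom{w}{i}\binom{\nminusw}{i}$. Summing over $0\le i\le e$ gives $L_e(w\nminusw)=\Phi_e(n,w)$.

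\textbf{Step 2: the roots.} By \cref{eq:section5-Lloyd-root-support} and \cref{eq:external_support}, the spectral support $\cS$ consists of exactly $e$ indices $j\in\{1,\ldots,w\}$. For each of them $J_j\chi\neq 0$, so $L_e(\theta_j)=0$. The $\theta_j$ are distinct, and $L_e$ has degree $e$ with leading coefficient $(e!)^{-2}$. Hence $(e!)^2L_e(x)=\prod_{j\in\cS}(x-\theta_j)$.

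\textbf{Step 3: the $h_i$ parametrization.} I will re-index by $h=w-j$. Since $n=2w+\delta$, we have $j(n+1-j)=(w-h)(w+\delta+h+1)$, so
$$\theta_j=w(w+\delta)-(w-h)(w+\delta+h+1)=h(h+\delta+1)-w=\lambda.$$
As $j$ ranges over $\{1,\ldots,w\}$, $h$ ranges over $\{0,\ldots,w-1\}$, so the $h_i$ are distinct integers in $[0,w-1]$.

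To show that the largest one is $d-1$, I will use \cref{eq:section5-exact-strength-dual-support}. It says $\cD_j^*=0$ for $j\le w-d$ and $\cD_{w-d+1}^*>0$, so $\cS\subseteq\{w-d+1,\ldots,w\}$ and $w-d+1\in\cS$. In terms of $h$, every $h_i\le d-1$ and $h=d-1$ occurs. Hence $h_e=d-1$ is the maximum. The delicate point is the link between $J_j\chi\ne 0$ and $\cD_j^*>0$, but both follow from \cref{eq:section5-dual-distribution-projection}, since $\chi^TJ_j\chi=\|J_j\chi\|^2$.

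\textbf{Step 4: evaluate the factorization at $x=w\nminusw$.} Each factor is
$$w\nminusw-\lambda_i=w(w+\delta)+w-h_i(h_i+\delta+1).$$
I expect this to equal $(w-h_i)(\nminusw+h_i+1)$. Expanding the right-hand side gives $w\nminusw+w+wh_i-h_i\nminusw-h_i^2-h_i$, and $wh_i-h_i\nminusw=-\delta h_i$, so the two expressions agree. Combined with Step 1, this yields the stated identity $(e!)^2\Phi_e(n,w)=\prod_{i=1}^e(w-h_i)(\nminusw+h_i+1)$.

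The main obstacle is the support bookkeeping in Step 3, namely checking that the top index of $\cS$ really corresponds to $h=d-1$. The rest is algebra.
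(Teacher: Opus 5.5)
Your proposal is correct and follows essentially the same route as the paper's proof. You evaluate $L_e$ at $\theta_0=w\nminusw$ (via $J_0$ rather than $A_i\mathbf 1=k_i\mathbf 1$, which is equivalent), factor $L_e$ over the $e$ eigenvalues in the spectral support, locate the support in $\{w-d+1,\ldots,w\}$ with $w-d+1$ attained using the exact strength, reindex by $h=w-j$, and evaluate the factorization at $x=w\nminusw$, exactly as the paper does.
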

\begin{proof}
We first evaluate $L_e(x)$ at the $k=w \nminusw$. For every $i$, $0 \leq i \leq w$,
let $k_i$ be the number of vertices at distance $i$ from a fixed vertex.
Since $A_i\mathbf1=k_i\mathbf1$, while $A_i=v_i(A)$ and $A\mathbf1=k\mathbf1$, we have that
$$
v_i(k)=k_i.
$$
%Since $v_i(x)$ is a polynomial and $A \mathbf1 = k \mathbf1$, it follows that $v_i(A) \mathbf1 = v_i(k) \mathbf1$,
%whereas $v_i(A) \mathbf1 = A_i \mathbf1 = v_i(k) \mathbf1$;
%comparing the two expressions yields that $v_i(k)=k_i$.
In J$(n,w)$, a vertex at distance $i$ from a fixed $w$-subset is obtained by deleting $i$ of its elements
and inserting $i$ elements from its $\nminusw$-subset complement. Hence
$$
k_i=\binom{w}{i}\binom{\nminusw}{i},
$$
and therefore
$$
L_e(w \nminusw)=\sum_{i=0}^ev_i(w \nminusw)=\sum_{i=0}^e\binom{w}{i}\binom{\nminusw}{i}=\Phi_e(n,w).
$$
By \cref{eq:external_support}, the spectral support $\cS$ has exactly $e$ elements. Let
$$
\cS=\{j_1<j_2<\cdots<j_e\}.
$$
For each $j_r \in\cS$, \cref{eq:section5-Lloyd-root-support} gives \(L_e(\theta_{j_t})=0\).
The numbers $\theta_0,\ldots,\theta_w$ are distinct by
\cref{eq:section5-Johnson-adjacency-eigenvalues}, and hence these are $e$ distinct roots of $L_e(x)$. Since $L_e(x)$ has degree $e$ and leading
coefficient $(e!)^{-2}$, it follows that
\begin{equation}
\label{eq:section5-factorization-by-support}
(e!)^2L_e(x)=\prod_{t=1}^e(x-\theta_{j_t}).
\end{equation}
The preceding spectral-support argument also identifies which eigenvalues occur.

We now use the exact strength of the code. By \cref{eq:section5-exact-strength-dual-support} we have that,
$$
j_1=w-d+1
$$
and
$$
w-d+1 \leq j_i \leq w, \qquad 2 \leq i \leq e.
$$
Define
$$
h_i \triangleq w-j_{e+1-i}, \qquad 1\le i\le e.
$$
Since $j_1<\cdots<j_e\le w$, it follows that these are $e$ distinct integers satisfying
$$
0\le h_1<h_2<\cdots<h_e=w-j_1=d-1.
$$
Since $\nminusw=w+\delta$, it follows by \cref{eq:section5-Johnson-adjacency-eigenvalues} that
$$
\theta_{w-h_i}=(w-(w-h_i))(z-(w-h_i))-(w-h_i)=h_i(h_i+\delta+1)-w.
$$
Thus, after merely reversing the order of the factors in \cref{eq:section5-factorization-by-support}, the roots can be written as
$$
\lambda_i \triangleq h_i(h_i+\delta+1)-w,
$$
and we obtain the first factorization in \cref{eq:section5-Lloyd-factorization}.

Finally, evaluate that factorization at $x=w \nminusw$, which implies that for every $i$,
$$
w \nminusw-\lambda_i=w \nminusw +w-h_i(h_i+\delta+1)=(w-h_i)(\nminusw+h_i+1),
$$
where the last equality is due to $\nminusw=w+\delta$. Together with the already proved identity $L_e(w \nminusw)=\Phi_e(n,w)$, this yields that
$$
(e!)^2\Phi_e(n,w)=\prod_{i=1}^e(w-h_i)(\nminusw+h_i+1),
$$
as required.
\end{proof}

\section{Complete Exclusion with Seven Exceptions}
\label{sec:complete-exclusion}

%By \cref{prop:lower bound of w} we have $\nminusw \ge w>2e^2$, and with $d\le e\sqrt w$ we also have
%\begin{equation}
%\label{eq:section5-M-less-2z}
%\frac{d}{\nminusw}\le\frac e{\sqrt{\nminusw}}<\frac1{\sqrt2},\qquad \nminusw <M=\nminusw+d<2 \nminusw.
%\end{equation}

In this section we start to combine all the results that were presented so far with two more
important concepts from number theory, associated with the distribution of primes,
to prove the nonexistence of $e$-perfect codes in the Johnson scheme,
unless $e \in \{1,2,4,9,10,12,16\}$.

For each prime $p$ which divides $\Phi_e(n,w)$ and its exponent $s_p$, define the \emph{reciprocal mass}
$$
\dS \triangleq \sum_{p\mid\Phi_e(n,w)}\frac1{s_p}.
$$
The point of this quantity is that the multiplicative size of $\Phi_e(n,w)$ becomes
an additive constraint on how many exponent classes can occur in the short localization interval.

\begin{lemma}[Reciprocal-mass bound]
\label{lem:section5-Lloyd-mass}
If there exists an $e$-perfect code in $\textup{J}(n,w)$ then
\begin{equation}
\label{eq:section5-mass-bounds}
\frac{\log\Phi_e(n,w)}{\log M}\le \dS<\frac{\log\Phi_e(n,w)}{\log \nminusw}.
\end{equation}
\end{lemma}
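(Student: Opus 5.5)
By \cref{cor:prime factors from chain divisitilities}, with $M=\nminusw+d$, the number $\Phi_e(n,w)$ is square-free. Every prime $p$ dividing it has some exponent $s_p\ge 1$ with $\nminusw<p^{s_p}\le M$. This $s_p$ is the exponent used in the definition of $\dS$, and I fix it as the unique such exponent: since $M=\nminusw+d<2\nminusw\le p\,\nminusw$ (because $d\le\lceil w/2\rceil\le \nminusw$), two consecutive powers of $p$ cannot both lie in $(\nminusw,M]$. Square-freeness gives
$$
\log\Phi_e(n,w)=\sum_{p\mid\Phi_e(n,w)}\log p ,
$$
so the whole argument reduces to bounding each $\log p$ in terms of $1/s_p$.

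For each such prime, $p^{s_p}\le M$ gives $s_p\log p\le\log M$, that is, $\log p\le \log M/s_p$. Summing over $p\mid\Phi_e(n,w)$ yields $\log\Phi_e(n,w)\le \dS\log M$, which is the left inequality. Similarly, $p^{s_p}>\nminusw$ gives $\log p>\log\nminusw/s_p$. Summing gives $\log\Phi_e(n,w)>\dS\log\nminusw$, provided the sum is nonempty. It is nonempty because $\Phi_e(n,w)\ge 1+w\nminusw>1$, so at least one prime divides it. Dividing by $\log\nminusw>0$ (valid since $\nminusw\ge w\ge 2$) gives the strict right inequality.

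The only points needing care are the uniqueness of $s_p$, so that $\dS$ is well defined, and the nonemptiness required for strictness; both are handled above. No real obstacle is expected. The main work was already done in \cref{lem:central-row-gcd}, which places every prime divisor of $\Phi_e(n,w)$, to the first power, in the short power interval $(\nminusw,M]$.
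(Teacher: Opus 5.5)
Your proof is correct and is essentially the paper's argument. Square-freeness from \cref{cor:prime factors from chain divisitilities} gives $\log\Phi_e(n,w)=\sum_{p\mid\Phi_e(n,w)}\log p$, and summing $\log\nminusw/s_p<\log p\le\log M/s_p$ over the prime divisors yields both bounds. Your extra checks are details the paper leaves implicit: that $s_p$ is uniquely determined, and that the sum is nonempty, which the strict inequality needs.
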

\begin{proof}
By Corollary~\ref{cor:prime factors from chain divisitilities} we have that $\Phi_e(n,w)$ is square-free and hence
$$
\log\Phi_e(n,w)= \hspace{-0.3cm} \sum_{p\mid\Phi_e(n,w)} \hspace{-0.3cm} \log p.
$$
For each summand, \( \nminusw <p^{s_p}\le M\) is equivalent to
$$
\frac{\log \nminusw}{s_p}<\log p\le\frac{\log M}{s_p}.
$$
Summing over all prime divisors gives \cref{eq:section5-mass-bounds}.
\end{proof}

\begin{lemma}
\label{lem:number_mult1}
If there exists an $e$-perfect code in $\textup{J}(n,w)$ then
for at most $e$ primes that divides $\Phi_e(n,w)$, we have that $s_p$ is equal to $1$.
\end{lemma}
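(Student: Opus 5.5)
The plan is to combine the Lloyd factorization of \cref{lem:section5-Lloyd-factorization} with the localization of the prime divisors given by \cref{cor:prime factors from chain divisitilities}. The idea is that every prime dividing $\Phi_e(n,w)$ with $s_p=1$ must be one of the $e$ factors $\nminusw+h_i+1$.

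\textbf{Step 1: locate the primes with $s_p=1$.} Let $p$ be a prime dividing $\Phi_e(n,w)$ with $s_p=1$. By \cref{cor:prime factors from chain divisitilities} this means
$$
\nminusw<p\le M=\nminusw+d .
$$
By \cref{prop:lower bound of w} we have $\nminusw\ge w>e$. Hence $p>e$, and therefore $p\nmid (e!)^2$.

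\textbf{Step 2: $p$ divides one of the factors $\nminusw+h_i+1$.} By \cref{eq:section5-Lloyd-factorization},
$$
p \,\big|\, (e!)^2\Phi_e(n,w)=\prod_{i=1}^e(w-h_i)(\nminusw+h_i+1),
$$
so $p$ divides one of these factors. Since $0\le h_i\le d-1$ and $d<w$ (the strength $\varphi=w-d$ is at least $1$), we get $1\le w-h_i\le w\le\nminusw<p$. So $p\nmid w-h_i$, and $p$ must divide some $\nminusw+h_i+1$.

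\textbf{Step 3: the divisibility is an equality.} The factor satisfies
$$
\nminusw+1\le \nminusw+h_i+1\le \nminusw+d .
$$
We have $d\le\lceil w/2\rceil\le \nminusw$, using \cref{cor:more_str_bound} for $e\ge 2$ and \cref{prop:Martin} for $e=1$. Hence $\nminusw+h_i+1\le 2\nminusw<2p$. A positive multiple of $p$ that is smaller than $2p$ equals $p$, so $p=\nminusw+h_i+1$.

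\textbf{Conclusion.} Each such prime is determined by an index $i\in\{1,\ldots,e\}$ via $p=\nminusw+h_i+1$. Distinct primes therefore correspond to distinct indices, so there are at most $e$ of them.

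The only delicate point is making sure the auxiliary inequalities $\nminusw>e$ and $d\le \nminusw$ are available in all cases, including $e=1$, so that $p\nmid e!$ and the factor $\nminusw+h_i+1$ lies below $2p$. For $e\ge2$ these come from \cref{prop:lower bound of w} and \cref{cor:more_str_bound}. For $e=1$, $\nminusw\ge w>1$ is immediate from nontriviality, and \cref{prop:Martin} gives $d=k+1\le w$.
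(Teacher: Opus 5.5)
Your proof is correct and follows essentially the same route as the paper. You localize $p$ in $(\nminusw,\nminusw+d]$, use $0<w-h_i\le w<p$ to force $p\mid \nminusw+h_i+1$, and then $\nminusw+h_i+1\le \nminusw+d<2p$ gives $p=\nminusw+h_i+1$, an injection into the $e$ indices (the remark $p\nmid (e!)^2$ in Step 1 is harmless but unnecessary).
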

\begin{proof}
If $s_p=1$ then $\nminusw<p\le \nminusw +d=M$.
Since \(0\le h_i\le d-1\), it follows that every first factor in \cref{eq:section5-Lloyd-factorization} satisfies
\[0<w-h_i\le w<p.\]
Hence, $p$ cannot divide any \(w-h_i\). Since $p$ divides $\Phi_e(n,w)$, it follows that the product factorization of $\Phi_e(n,w)$ implies
that $p$ divides $\nminusw+h_i+1$ for some \(i\). On the other hand,
$$
\nminusw <\nminusw +h_i+1\le \nminusw+d=M<2 \nminusw <2p.
$$
Thererfore, $p=\nminusw+h_i+1$ and since all the $h_i$s are distinct, it follows that at most $e$ primes can have localized exponent~$1$.
\end{proof}

We now turn our attention to two results concerning the distribution of primes.
The first one is an improvement to what is known as Bertrand's postulate and Bertrand–Chebyshev theorem that
for every integer $x>1$ there is at least one prime between $x$ and $2x$. There are a few improvements that can be
used for our purpose and we will use the following one.

\begin{proposition}[Theorem~4 of \cite{Axl18}]
\label{prop:quoted-axler-short-interval}
For every $x>1$, there is a prime number $p$ such that
\[
 x<p\le x\left(1+\frac{198.2}{\log^4 x}\right).
\]
\end{proposition}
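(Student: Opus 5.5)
This proposition is quoted from \cite{Axl18} and is used only as a black box. To prove it, I would follow the standard route for explicit short-interval prime results and work with Chebyshev's function $\vartheta(x)=\sum_{p\le x}\log p$. Note that there is a prime in $(x,x+h]$ if and only if $\vartheta(x+h)-\vartheta(x)>0$. So it suffices to show
$$
\vartheta\Bigl(x\bigl(1+\tfrac{198.2}{\log^4 x}\bigr)\Bigr)-\vartheta(x)>0
$$
for every $x>1$. I would split the range of $x$ into three parts: a trivial range, an intermediate range handled by computation, and a large range handled by an explicit version of the prime number theorem.

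For small $x$ the statement follows directly from Bertrand's postulate. Indeed, $198.2/\log^4 x\ge 1$ exactly when $\log x\le 198.2^{1/4}\approx 3.75$, that is, for $1<x\lesssim 42$. On this range the interval in the statement contains $(x,2x]$, which always contains a prime.

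For large $x$ I would invoke an explicit estimate of the form
$$
|\vartheta(y)-y|<\eta_4\,\frac{y}{\log^4 y}, \qquad y\ge y_0 .
$$
Such estimates come from Dusart- or Axler-type results, which combine an explicit zero-free region for $\zeta(s)$ with a numerical verification of the Riemann hypothesis up to a large height. Put $h=198.2\,x/\log^4 x$ and apply the estimate at $y=x$ and $y=x+h$; since $\log(x+h)\ge \log x$, this gives
$$
\vartheta(x+h)-\vartheta(x)\ \ge\ h-\eta_4\,\frac{2x+h}{\log^4 x}
= \frac{x}{\log^4 x}\Bigl(198.2-\eta_4\bigl(2+\tfrac{198.2}{\log^4 x}\bigr)\Bigr).
$$
This is positive as soon as $\eta_4<198.2/(2+o(1))$. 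The constant $198.2$ should be exactly what the best available $\eta_4$ and threshold $y_0$ allow.

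The main obstacle is the intermediate range $42\lesssim x<y_0$, where $y_0$ is astronomically large. There I would cover as much as possible by the computed tables of maximal prime gaps, which are known up to about $4\cdot 10^{18}$ by Oliveira e Silva et al. These show that consecutive primes $p<p'$ satisfy $p'-p<198.2\,p/\log^4 p$ throughout that range. For the remaining gap between the computed range and $y_0$, I would use the sharper explicit $\vartheta$-bounds valid on bounded ranges, such as Büthe-type estimates derived from the verified zeros. The delicate work is to check that these bounds dovetail with no uncovered range and with the constant $198.2$ surviving. That calibration is where the actual paper \cite{Axl18} does its work, and I would defer to it for the precise numerics.
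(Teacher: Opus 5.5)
The paper gives no proof of this statement: it quotes it from \cite{Axl18} and uses it only as a black box in Lemma~\ref{lem:section5-comparison-primes}, so your decision to treat it the same way matches the paper exactly. Your added outline has the standard and sound structure for such explicit short-interval results, with no incorrect steps: Bertrand's postulate for $1<x\le e^{198.2^{1/4}}\approx 42.6$, an explicit bound $|\vartheta(y)-y|<\eta_4\, y/\log^4 y$ with $\eta_4$ below $99.1$ for large $y$, and prime-gap tables plus range-restricted $\vartheta$-estimates in between. As you acknowledge, it only becomes a proof once the numerical calibration in \cite{Axl18} is actually carried out.
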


In the following lemma, the comparison primes are all consequences from \cref{lem:block-congruence-mod-p},
\cref{thm:modulo-p-criterion}, and \cref{cor:moving-prime-detector}.
The finite table for $e\ge32$ is retained only to start the quartic weight bound;
after that point no distinction between $n=2w$ and $n>2w$ occurs.

\begin{lemma}[Detected comparison primes]
\label{lem:section5-comparison-primes}
For every $e\ge32$, there are two primes $p_e<q_e$ in~$\cP_e$. If $32\le e\le396$, they may be chosen from the table below; for each row one also has
\begin{equation}
\label{eq:section5-cube-gap-table}
q_e^3-p_e^3>e\,p_e^{3/2}.
\end{equation}
$$
\begin{array}{c|c|c|c}
 e\text{-range}&p_e&q_e&(q_e^3-p_e^3)/p_e^{3/2}\\ \hline
32\!\le e\!\le40&41&43&40.32\ldots\\
41\!\le e\!\le43&47&53&139.82\ldots\\
44\!\le e\!\le52&53&59&146.43\ldots\\
53\!\le e\!\le66&67&71&104.20\ldots\\
67\!\le e\!\le82&83&89&176.12\ldots\\
83\!\le e\!\le100&101&107&191.85\ldots\\
101\!\le e\!\le117&127&131&139.53\ldots\\
118\!\le e\!\le147&149&157&308.96\ldots\\
148\!\le e\!\le190&191&197&256.66\ldots\\
191\!\le e\!\le240&241&251&485.31\ldots\\
241\!\le e\!\le310&311&317&323.59\ldots\\
311\!\le e\!\le396&397&409&739.19\ldots\\
\end{array}
$$
For $e\ge397$, one may choose $p_e,q_e\in\mathcal P_e$ so that \cref{eq:section5-cube-gap-table} still holds and $q_e<4e/3$.
\end{lemma}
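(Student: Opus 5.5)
The lemma is a purely number-theoretic statement about primes in the interval $\cP_e=(e,(4e+2)/3]$, and I would prove it in two regimes. For $32\le e\le 396$ it is a finite verification. For each row of the table I check three things. First, $p_e$ and $q_e$ are prime. Second, $p_e>e$ for every $e$ in the row, which only requires $p_e$ to exceed the right endpoint of the $e$-range. Third, $q_e\le(4e+2)/3$ for every $e$ in the row, which only requires the condition at the left endpoint $e_0$, i.e.\ $3q_e\le 4e_0+2$. For example, in the first row $41>40$ and $3\cdot43=129\le 130$. Finally I verify $q_e^3-p_e^3>e\,p_e^{3/2}$. Since the right-hand side grows with $e$, it suffices to check it at the right endpoint $e_1$ of the range, that is, to check that the tabulated ratio $(q_e^3-p_e^3)/p_e^{3/2}$ exceeds $e_1$. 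Every row has a comfortable margin; the tightest is the first, with $40.32>40$. This part is routine.

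For $e\ge397$ I would use \cref{prop:quoted-axler-short-interval} twice. Set $\eta(x)=198.2/\log^4 x$. Apply the proposition with $x=e$ to get a prime $p_e$ with $e<p_e\le e(1+\eta(e))$. Then apply it with $x=p_e(1+\theta)$ for a suitable small $\theta>0$ to get a prime $q_e$ with $p_e(1+\theta)<q_e\le p_e(1+\theta)(1+\eta(p_e))$.

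The cube gap then gives
\[
q_e^3-p_e^3>p_e^3\bigl((1+\theta)^3-1\bigr)\ge 3\theta p_e^3 .
\]
We need this to exceed $e\,p_e^{3/2}$, and since $p_e>e$ it suffices that $3\theta p_e^{3/2}\ge p_e$, i.e.\ $\theta\ge 1/(3\sqrt{p_e})$. I would take $\theta=1/\sqrt{e}$, which is at least $1/(3\sqrt{p_e})$ with room to spare. The remaining requirement is $q_e<4e/3$. This holds provided
\[
(1+\eta(e))^2(1+1/\sqrt e)<4/3 ,
\]
using $\eta(p_e)\le\eta(e)$. At $e=397$ we have $\log e\approx5.98$, so $\log^4e\approx1280$ and $\eta\approx0.155$. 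Then $(1.155)^2\approx1.334$, and multiplying by $1+1/\sqrt{397}\approx1.05$ gives about $1.40$, which fails.

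This is the main obstacle: Axler's bound is too weak for $e$ just above $397$. I would handle it by splitting further. For $e\ge e^*$, with $e^*$ around $10^4$ (where $\eta\approx0.0023$), the displayed inequality holds, and both $\eta(e)$ and $1/\sqrt e$ decrease monotonically, so it holds for all larger $e$. For $397\le e<e^*$, I would extend the table by a finite explicit chain of prime pairs, again checking each row only at its endpoints as above. Alternatively, I would replace Axler's bound by a sharper explicit short-interval result (e.g.\ Nagura's prime in $(x,6x/5]$ for $x\ge25$, applied twice) to shrink this finite range. Since $4/3$ leaves plenty of space above $1$, a couple of nested Bertrand-type steps with ratio $6/5$ and $(6/5)^2=1.44$ would not fit directly, so I expect to rely on Axler at large $e$ and on a finite check in between.

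Finally, I note that $q_e<4e/3<(4e+2)/3$ implies $q_e\in\cP_e$. Combined with $e<p_e<q_e$, this gives both primes in $\cP_e$, as required.
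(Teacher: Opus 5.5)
\textbf{The gap is the range $397\le e<e^*$.} Your verification of the table rows matches the paper's: membership in $\cP_e$ is checked at the two endpoints of each range, and the cube-gap inequality at the right endpoint. Your argument for $e\ge e^*$ is also correct.

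With your estimates, $q_e\le e\,(1+\eta(e))^2(1+\theta)$. Already $(1+\eta(397))^2\approx 1.3331$, which leaves no room below $4/3$ for the factor $1+\theta\ge 1+1/(3\sqrt{p_e})$ that the cube gap needs. You acknowledge this, and then defer the range to ``a finite explicit chain of prime pairs'' that you never exhibit. As written, the lemma is therefore not proved for these $e$.

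There is also a numerical slip: $\eta(10^4)=198.2/\log^4(10^4)\approx 0.0275$, not $0.0023$. Your displayed inequality still holds at $10^4$, and in fact from about $e\approx 520$ on. So the missing range needs only a few extra table rows, but they still have to be produced and checked.

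\textbf{The missing idea.} Place the second prime above a fixed multiple of $e$ rather than above $p_e$. This removes the intermediate range altogether. The paper applies \cref{prop:quoted-axler-short-interval} at $x=e$ and at $x=1.17e$. With $r(x)=1+198.2/\log^4x$ one has $r(e)\le r(397)<1.16$, hence
$e<p_e\le e\,r(e)<1.17e<q_e\le 1.17e\,r(1.17e)$.
Only one factor of $r$ enters the bound on $q_e$, and $1.17\,r(1.17\cdot 397)<4/3$.

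The cube gap then follows from $q_e^3-p_e^3>e^3\bigl(1.17^3-r(e)^3\bigr)$ and $e\,p_e^{3/2}\le e^{5/2}r(e)^{3/2}$. The required inequality $\sqrt{e}\,\bigl(1.17^3-r(e)^3\bigr)>r(e)^{3/2}$ holds at $e=397$. As $e$ grows, the left side increases and the right side decreases. This handles every $e\ge 397$ directly, which is why the table can stop at $396$.
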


\begin{proof}
For each row of the table for $32\le e\le 396$, both listed primes exceed the largest $e$ in that row,
while the larger prime is at most $Q_e=(4e+2)/3$ with the least $e$.
Hence $p_e$ and $q_e$ lie in~$\cP_e$ throughout the row. The last column can be verified with a calculator.

For $e\ge 397$, we use \cref{prop:quoted-axler-short-interval} and define
$$
r(x) \triangleq 1+\frac{198.2}{\log^4x}.
$$
Since the function $r(x)$ decreases, $r(e)\le r(397)<1.16$. Applying \cref{prop:quoted-axler-short-interval} on $x=e$ and on $x=1.17e$ gives two primes
$$
e<p_e\le e \cdot r(e)<1.17e<q_e\le 1.17e \cdot r(1.17e)\le 1.17e \cdot r(1.17\cdot 397)<4e/3.
$$
Thus $\cP_e$ contains two primes $p_e,q_e$ for every $e\ge 397$. Moreover,
$$
q_e^3-p_e^3>e^3(1.17^3-r(e)^3),\qquad e\,p_e^{3/2}\le e^{5/2}r(e)^{3/2}.
$$
The inequality $q_e^3-p_e^3>e\,p_e^{3/2}$ holds when $\sqrt{e}(1.17^3-r(e)^3)>r(e)^{3/2}$. When $e=397$ we have
$$
\sqrt{397}(1.17^3-r(397)^3)>1.245>1.241>r(397)^{3/2}.
$$
The left side increases with $e$ and the right side decreases, proving \cref{eq:section5-cube-gap-table} for all ${e\ge397}$.
\end{proof}

The next lemma improves on \cref{prop:lower bound of w}.
Combined with the upper bound on the strength redundancy in \cref{cor:str_redund}, i.e., $d\le e\sqrt w$, this makes the localization interval
$(\nminusw,\nminusw+d]$ sufficiently short
for the reciprocal-mass and exponent-spacing arguments used in the remainder of Section~\ref{sec:complete-exclusion}.

\begin{lemma}
\label{lem:section5-weight-amplification}
If $e \geq 32$ and an $e$-perfect code exists in \textup{J}$(n,w)$, then $w > e^4$.
\end{lemma}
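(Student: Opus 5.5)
We argue by contradiction: suppose $e\ge 32$, an $e$-perfect code exists in J$(n,w)$, and $w\le e^4$. We use the two comparison primes $p_e<q_e$ of \cref{lem:section5-comparison-primes}. By \cref{cor:moving-prime-detector} both divide $\Phi_e(n,w)$. By \cref{cor:prime factors from chain divisitilities} each has a power in the localization interval:
$$
\nminusw<p_e^{s}\le \nminusw+d,\qquad \nminusw<q_e^{t}\le \nminusw+d ,
$$
for some exponents $s,t\ge1$. We also have $d\le \lfloor e\sqrt{w}\rfloor$ by \cref{cor:str_redund}, and $w\le\nminusw$. Hence the interval $(\nminusw,\nminusw+d]$ has relative length at most $e/\sqrt{w}$. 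By \cref{prop:lower bound of w}, $w>2e^2$, so this relative length is less than $1/\sqrt2$.

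\textbf{Step 1: the two exponents coincide.} Both primes lie in $(e,4e/3)$ and are distinct. If $t\ne s$, say $t>s$, then $q_e^t/p_e^s\ge q_e\,(q_e/p_e)^{s}\cdot q_e^{t-s-1}>e$. This contradicts the fact that both powers lie in an interval whose endpoints have ratio less than $1+1/\sqrt2$. The case $t<s$ is handled the same way, with ratio $p_e^{s}/q_e^{t}\ge p_e\,(p_e/q_e)^{t}$. Since $(q_e/p_e)^t<(4/3)^t$ and $p_e^{s-t}\ge p_e>e$, the ratio is still far from $1$ in the relevant range $s\le 4$. So $s=t$.

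\textbf{Step 2: the size of $s$.} The hypothesis $w\le e^4$ together with \eqref{eq:section5-w-over-z} gives $\nminusw<\frac{e+1}{e}w\le (e+1)e^3$. Since $p_e^s>\nminusw\ge w>2e^2$ and $p_e<4e/3$, the exponent satisfies $s\ge 2$. If $s=2$, then $w\le\nminusw<p_e^2<16e^2/9<2e^2$, contradicting \cref{prop:lower bound of w}. If $s\ge4$, then
$$
\nminusw\ge p_e^4-d\ge (e+1)^4-e\sqrt{\nminusw},
$$
which forces $\nminusw>(e+1)e^3$. This contradicts the bound on $\nminusw$ above. A little slack must be checked here; I would use $p_e\ge e+1$ and $w\le e^4$. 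Hence $s=3$.

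\textbf{Step 3: the cube gap.} With $s=t=3$, both cubes lie in an interval of length $d$, so
$$
q_e^3-p_e^3\le d\le e\sqrt{w}\le e\sqrt{\nminusw}<e\,p_e^{3/2},
$$
where the last inequality uses $\nminusw<p_e^3$. This contradicts the cube-gap inequality \cref{eq:section5-cube-gap-table}. Therefore $w>e^4$.

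The main obstacle is the bookkeeping in Step 2. The case $s=2$ must genuinely be excluded by \cref{prop:lower bound of w} even in the weakest case $n=2w$, where the bound is only $w>2e^2+4e+1$; the inequality $16e^2/9<2e^2$ suffices for this. The case $s\ge 4$ needs care with the offsets $d$ and the factor $(e+1)/e$. The cross-exponent comparison in Step 1 should also be checked explicitly for the few pairs $(s,t)$ with $s,t\le 4$, since only those can occur.
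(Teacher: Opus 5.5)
Your proof is correct and is essentially the paper's argument. Under $w\le e^4$ you pin the localized exponents of the two comparison primes at $3$: the bound $w>2e^2$ rules out exponent $\le 2$, and $\nminusw+d<e^4+2e^3<(e+1)^4\le p_e^4$ rules out exponent $\ge 4$. The cube-gap inequality \eqref{eq:section5-cube-gap-table} then gives the contradiction.

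Your Step~1 is superfluous, and its case $t<s$ silently uses the exponent bound from Step~2. Step~2 applies verbatim to $q_e$, which is exactly how the paper obtains $s_{p_e}=s_{q_e}=3$ directly. For $q_e$ you should use $q_e\le(4e+2)/3$, whose square is still below $2e^2$ for $e\ge32$, rather than $q_e<4e/3$, which fails for $e=32,44,53$.
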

\begin{proof}
Suppose for contradiction that $w\le e^4$. By \cref{eq:section5-w-over-z} and since $d\le e\sqrt w$, we have
\begin{equation}
\label{eq:section5-quartic-basic-upper}
\nminusw<e^4+e^3,\qquad M=\nminusw +d<e^4+2e^3.
\end{equation}
By \cref{lem:section5-comparison-primes} there exist two primes $p_e,q_e\in\cP_e$ with $p_e<q_e<4e/3$ when $e \geq 397$.
When $32 \leq e < 397$ there exists such primes with $p_e <q_e \leq (4e+2)/3$. This implies that
$$
p_e^2<q_e^2< (4e/3)^2< 2e^2 < \nminusw,
$$
and hence the localized exponent $s_{p_e},s_{q_e}\ge3$.  On the other hand, \cref{eq:section5-quartic-basic-upper} gives
$$
M<e^4+2e^3<(e+1)^4\le p_e^4<q_e^4,
$$
and therefore $s_{p_e},s_{q_e}=3$. The cubes of these primes lie in $(\nminusw,\nminusw+d]$, and hence we have
$$
q_e^3-p_e^3<d\le e\sqrt{\nminusw}<e\,p_e^{3/2}.
$$
This contradicts \cref{eq:section5-cube-gap-table} and thus $w > e^4$.
\end{proof}

In the sequel we need the following result taken from~\cite{Lau08}. The next proposition is a simplified version
of the results of Laurent in~\cite{Lau08}.

\begin{proposition}[Corollary~2 and Table~1 of \cite{Lau08}]
\label{prop:quoted-laurent-cor2}
Let $p$ and $q$ be two distinct primes, $b_1$ and $b_2$ be two positive integers,
$\Lambda=b_2\log p-b_1\log q$, and $z_1$, $z_2$ be two integers such that $z_1 \geq p$ and $z_2 \geq q$.
If
$$
b' \triangleq \frac{b_1}{\log z_2}+\frac{b_2}{\log z_1},
$$
then
$$
\log|\Lambda|\ge-C_2 \bigl(\max\{\log b'+0.38,m\}\bigr)^2\log z_1\log z_2,
$$
for each pair \((m,C_2)\) in the following table:
$$
\begin{array}{c|ccccccccccc}
m&10&12&14&16&18&20&22&24&26&28&30\\ \hline
C_2&25.2&23.4&22.1&21.1&20.3&19.7&19.2&18.8&18.4&18.1&17.9.
\end{array}
$$
\end{proposition}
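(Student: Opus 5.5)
This proposition is not proved from earlier results in the paper. It is a direct specialization of Laurent's lower bound for linear forms in two logarithms, so the plan is to quote Laurent's Corollary~2 in its general form and check that our data satisfy its hypotheses. In the general form one takes two multiplicatively independent algebraic numbers $\alpha_1,\alpha_2$ in a number field of degree $D$, with fixed determinations of $\log\alpha_1,\log\alpha_2$. One takes positive integers $b_1,b_2$ and real numbers $A_1,A_2>1$ such that
$$
\log A_i\ge\max\Bigl\{h(\alpha_i),\frac{|\log\alpha_i|}{D},\frac1D\Bigr\}.
$$
With $b'=\frac{b_1}{D\log A_2}+\frac{b_2}{D\log A_1}$, the form $\Lambda=b_2\log\alpha_1-b_1\log\alpha_2$ then satisfies
$$
\log|\Lambda|\ge -C\,D^4\bigl(\max\{\log b'+0.38,\,m/D,\,1\}\bigr)^2\log A_1\log A_2,
$$
for each pair $(m,C)$ listed in Laurent's Table~1.

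\textbf{Step 1: choice of parameters.} I would take $\alpha_1=p$ and $\alpha_2=q$ with real logarithms, so that $D=1$, and set $A_1=z_1$, $A_2=z_2$. Since $p$ and $q$ are distinct primes, unique factorization gives $p^{b_2}\neq q^{b_1}$ for positive $b_1,b_2$. Hence $p$ and $q$ are multiplicatively independent and $\Lambda\neq0$.

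\textbf{Step 2: the height condition.} For a rational integer $p\geq2$, the absolute logarithmic height is $h(p)=\log p=|\log p|$, and this is at least $\log 2>0$. Also $1/D=1\le\log z_1$ as soon as $z_1\geq3$; for $z_1=p=2$ one either enlarges $z_1$, which only weakens the bound, or checks Laurent's exact normalization. Since $z_1\ge p$, we get $\log z_1\ge\max\{h(p),|\log p|\}$, and the same argument applies to $q$ and $z_2$. With $D=1$, Laurent's $b'$ is literally the $b'$ of the statement, and the factor $D^4$ equals $1$.

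\textbf{Step 3: simplifying the maximum and reading off constants.} With $D=1$ we have $m/D=m$. Every $m$ in our table is at least $10>1$, so the inner maximum $\max\{\log b'+0.38,m,1\}$ reduces to $\max\{\log b'+0.38,m\}$. The constants $C_2$ are then copied from the rows of Laurent's Table~1 indexed by $m=10,12,\ldots,30$.

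\textbf{Main obstacle.} The only real difficulty is bookkeeping. Laurent distinguishes several cases: the general case, the real case where $\alpha_1,\alpha_2$ are real and positive, and versions with or without the factor $D^4$ and with different constants. I would first confirm that the listed $C_2$ belong to the column valid for real positive $\alpha_i$ with $D=1$. I would also check that Laurent's height hypothesis uses $\log A_i$ rather than $A_i$ itself, so that choosing $z_i\ge p,q$ is legitimate; at the very small prime $p=2$ this may need the check noted in Step~2.
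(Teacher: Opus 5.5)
Your approach is the paper's approach. The paper gives no argument for this proposition beyond citing Laurent's Corollary~2 and Table~1, and your specialization ($D=1$, the $\alpha_i$ equal to the two primes, $A_i=z_i$, and the maximum collapsing because $m\ge10$) is the right way to unpack that citation. Two steps of the unpacking, however, do not hold as written.

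\textbf{The pairing in $b'$.} You quote Laurent's form as $\Lambda=b_2\log\alpha_1-b_1\log\alpha_2$, which looks read off from the displayed statement rather than from the source. In Laurent (as in Laurent--Mignotte--Nesterenko) the form is $\Lambda=b_2\log\alpha_2-b_1\log\alpha_1$ with $b'=\frac{b_1}{D\log A_2}+\frac{b_2}{D\log A_1}$. That is, each coefficient is divided by the height of the \emph{other} number. Two facts support this. When $|\Lambda|$ is small, $b_2\log\alpha_2\approx b_1\log\alpha_1$, and only this cross pairing makes the two summands of $b'$ comparable. It is also the pairing the paper itself uses in \cref{lem:section5-Laurent}, where $\Lambda=u\log p-v\log q$ comes with $B=\frac{u}{\ell(q)}+\frac{v}{\ell(p)}$.

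Matching $\Lambda=b_2\log p-b_1\log q$ to Laurent's form means $\alpha_2=p$, $\alpha_1=q$, $A_2=z_1$, $A_1=z_2$. Laurent then gives the bound with $b'_L=\frac{b_1}{\log z_1}+\frac{b_2}{\log z_2}$, not with the displayed $b'$. So your claim that Laurent's $b'$ is ``literally'' the $b'$ of the statement is false; the displayed $b'$ is a transcription slip.

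The displayed version can still be recovered when $z_1=p$, $z_2=q$ (with $p,q\ge3$) by one more step. We have $b'-b'_L=(b_2-b_1)\bigl(\frac{1}{\log p}-\frac{1}{\log q}\bigr)$, which is nonnegative unless $b_2-b_1$ and $q-p$ have opposite signs. In that exceptional case a direct estimate gives $|\Lambda|\ge\log\max\{p,q\}>1$, so the inequality holds trivially. Otherwise $b'\ge b'_L$, and since the right-hand side decreases as $b'$ grows, the displayed bound follows from Laurent's. For general $z_1\ge p$, $z_2\ge q$ the sign of $\frac1{\log z_1}-\frac1{\log z_2}$ is no longer tied to $q-p$, and the displayed form does not follow this way.

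\textbf{The case $z_i=2$.} Laurent's hypothesis $\log A_i\ge 1/D=1$ fails for $A_i=2$. Moving to an admissible $A_1$ (one with $\log A_1\ge1$) replaces the factor $\log 2$ by something at least $1$, which gives a weaker inequality than the one claimed. So your suggested fix does not prove this case, and the citation does not cover it. The statement should be read with $z_i\ge3$, or with $\log z_i$ replaced by $\max\{1,\log z_i\}$, which is exactly the paper's $\ell(\cdot)$.

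Neither point affects the paper's results. Its only application takes $z_1=p$, $z_2=q$ with odd primes $p,q>32$, and uses the correctly paired $B$.
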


We next record the effective two-logarithm estimate used to cap the odd localization exponents.
For a prime $p$, put $\ell(p)=\max\{1,\log p\}$ ($\ell (p)= \log p$ unless $p=2$).

\begin{lemma}
\label{lem:section5-Laurent}
Let $p$ and $q$ be distinct primes and $u,v\ge1$ be two integers. Let
$$
\Lambda=u\log p-v\log q,\qquad B=\frac{u}{\ell(q)}+\frac{v}{\ell(p)},\qquad H=\max\{\log B+0.38,10\}.
$$
Then $\Lambda\ne0$ and
\begin{equation}
\label{eq:fromLaurent}
\log|\Lambda|\ge-25.2H^2\ell(p)\ell(q).
\end{equation}
\end{lemma}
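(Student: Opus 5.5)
The lemma should follow from \cref{prop:quoted-laurent-cor2} by a direct specialization; the work is in matching the parameters. First I will dispose of $\Lambda\neq0$. If $u\log p=v\log q$, then $p^u=q^v$ with $u,v\ge1$. This contradicts unique factorization, since $p\ne q$ are primes. So $\log|\Lambda|$ is finite.

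Next I apply \cref{prop:quoted-laurent-cor2}, taking care with the roles. Its form is $\Lambda=b_2\log p-b_1\log q$, so I set $b_2=u$ and $b_1=v$. For the auxiliary quantities I take $\log z_1=\ell(p)$ and $\log z_2=\ell(q)$. Concretely, $z_1=p$ unless $p=2$, in which case $\log z_1=1$, and similarly for $z_2$.

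The case $p=2$ needs checking, because the proposition asks for $z_1\ge p$. When $p=2$ we want $\log z_1=1$, i.e. $z_1=e\approx2.718\ge2$. This is not an integer. I expect the cited Corollary~2 of Laurent really allows real $\log z_i\ge\max\{\log p,1\}$, and the paper's ``integers'' is a simplification. The choice $\ell(p)=\max\{1,\log p\}$ is exactly the standard Laurent normalization, so this supports that reading. If integrality is truly needed, I would take $z_1=3$ instead. Then $\log 3$ exceeds $1$ only slightly, and one checks that the bound with $\ell$ still dominates. Note that enlarging $\log z_i$ both shrinks $b'$ and enlarges the right-hand factor $\log z_1\log z_2$, so this direction needs care. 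I flag the $p=2$ (or $q=2$) normalization as the only real obstacle.

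With these choices,
\[
b'=\frac{v}{\ell(q)}+\frac{u}{\ell(p)}.
\]
The lemma's $B$ is $u/\ell(q)+v/\ell(p)$, which has the roles of $u,v$ swapped relative to $b'$. To fix this I instead set $b_1=u$ and $b_2=v$ and apply the proposition to $-\Lambda=v\log q-u\log p$, with the roles of $p$ and $q$ exchanged ($z_1\leftrightarrow q$, $z_2\leftrightarrow p$). Then $b'=u/\ell(q)+v/\ell(p)=B$, and $|\Lambda|$ is unchanged.

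Finally I choose the table pair $(m,C_2)=(10,25.2)$. Then $\max\{\log b'+0.38,10\}=H$, and the proposition yields
\[
\log|\Lambda|\ge -25.2\,H^2\,\ell(p)\ell(q),
\]
which is \cref{eq:fromLaurent}.
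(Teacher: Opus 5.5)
Your route is the paper's: specialize Laurent's Corollary~2 at $(m,C_2)=(10,25.2)$. Your first choice $b_2=u$, $b_1=v$, $\log z_1=\ell(p)$, $\log z_2=\ell(q)$ is exactly the one made in the paper's proof. You are also right that, read literally, the formula in \cref{prop:quoted-laurent-cor2} then gives $b'=u/\ell(p)+v/\ell(q)$ rather than $B$.

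The gap is in your repair. With $b_1=u$, $b_2=v$, $\log z_1=\ell(q)$, $\log z_2=\ell(p)$ the same formula gives
\[
b'=\frac{b_1}{\log z_2}+\frac{b_2}{\log z_1}=\frac{u}{\ell(p)}+\frac{v}{\ell(q)}.
\]
This is unchanged, so your identity $b'=B$ is a miscalculation. No relabeling can help: the quoted $b'$ always divides the coefficient of $\log p$ by the $\log z$ attached to $p$, and likewise for $q$, whereas $B$ uses the cross pairing. Nor does the quoted form imply the lemma by monotonicity, since
\[
\frac{u}{\ell(p)}+\frac{v}{\ell(q)}-B=(u-v)\Bigl(\frac{1}{\ell(p)}-\frac{1}{\ell(q)}\Bigr).
\]
This is positive whenever $p^u$ and $q^v$ are close and $u\neq v$ (then $p<q$ forces $u>v$). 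That is precisely the regime in which the lemma is used. There the quoted form gives a strictly weaker inequality as soon as $\log b'+0.38>10$.

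What closes the gap is Laurent's actual normalization. For $\Lambda=b_2\log\alpha_2-b_1\log\alpha_1$ he sets $b'=b_1/(D\log A_2)+b_2/(D\log A_1)$, so the coefficient of $\log\alpha_1$ is divided by $\log A_2$. Here the $A_i>1$ are real numbers with $\log A_i\ge\max\{h(\alpha_i),|\log\alpha_i|/D,1/D\}$. The transcription in \cref{prop:quoted-laurent-cor2} instead pairs each coefficient with its own $z$. Take $D=1$, $\alpha_2=p$, $\alpha_1=q$, $b_2=u$, $b_1=v$, $\log A_2=\ell(p)$, $\log A_1=\ell(q)$. Then $b'=v/\ell(p)+u/\ell(q)=B$ at once, together with the stated bound. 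The paper's one-line proof, which makes your first choice, arrives at $B$ only when read against this original form.

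The same normalization settles $p=2$ as you guessed. Real $A_i$ are allowed, and $\log A_i\ge1$ is in fact required, so $\log A_i=\ell(\cdot)$ is admissible. Drop the integer fallback $z_1=3$: since $H\ge10$, in the case $H=10$ it multiplies the lower bound by $\log 3>1$, which does not give the stated inequality. (The paper only applies the lemma to odd primes, where $\ell(p)=\log p$.) Your argument for $\Lambda\neq0$ is fine.
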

\begin{proof}
Apply \cref{prop:quoted-laurent-cor2} with the ${m=10}$ and $C_2=25.2$ ($b_1=v$, $b_2=u$).
Now, choose $z_1=p$ and $z_2=q$.
Thus the hypotheses of his real-positive corollary are satisfied. Clearly $\Lambda \ne 0$,
and \cref{eq:fromLaurent} is exactly the conclusion of \cref{prop:quoted-laurent-cor2}.
\end{proof}

\begin{lemma}
\label{lem:section5-moving-cap}
If there exists an $e$-perfect code in \textup{J}$(n,w)$ and
$e\ge32$, then every odd prime divisor $p$ of $\Phi_e(n,w)$ satisfies
%\begin{equation}\label{eq:section5-exponent-cap-odd}
$s_p<5483\log e$.
%\end{equation}
%\end{enumerate}
\end{lemma}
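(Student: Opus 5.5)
The plan is to compare the odd prime $p$ with one of the detected comparison primes from \cref{lem:section5-comparison-primes}, and to use Laurent's bound (\cref{lem:section5-Laurent}) on a linear form in two logarithms. Fix an odd prime divisor $p$ of $\Phi_e(n,w)$ with exponent $s=s_p$, so that $\nminusw<p^{s}\le M=\nminusw+d$.

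\textbf{Step 1 (choice of comparison prime).} By \cref{lem:section5-comparison-primes} there are two primes $p_e<q_e$ in $\cP_e$. Pick $q\in\{p_e,q_e\}$ with $q\ne p$. By \cref{cor:moving-prime-detector}, $q$ divides $\Phi_e(n,w)$. By \cref{cor:prime factors from chain divisitilities} it therefore has an exponent $t=s_q$ with $\nminusw<q^{t}\le M$. Both $p^s$ and $q^t$ lie in the interval $(\nminusw,M]$, so
$$
|s\log p-t\log q|\le \log\frac{M}{\nminusw}\le \frac{d}{\nminusw}\le \frac{e}{\sqrt{\nminusw}},
$$
using \cref{cor:str_redund}. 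The quantity $\Lambda=s\log p-t\log q$ is nonzero because $p\ne q$. Hence
$$
\log|\Lambda|\le \log e-\tfrac12\log \nminusw .
$$

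\textbf{Step 2 (apply Laurent).} \cref{lem:section5-Laurent} with $u=s$ and $v=t$ gives
$$
\log|\Lambda|\ge -25.2H^2\ell(p)\ell(q),
$$
where $B=s/\ell(q)+t/\ell(p)$. Since $p^s\approx q^t\approx \nminusw$, we have $s\log p\approx t\log q\approx\log\nminusw$. Thus
$$
B\approx \frac{\log \nminusw}{\log p\log q}\cdot 2\le \frac{2\log\nminusw}{\ell(p)\ell(q)}.
$$
Combining with Step 1:
$$
\tfrac12\log\nminusw-\log e\le 25.2H^2\ell(p)\ell(q).
$$
By \cref{lem:section5-weight-amplification}, $w>e^4$, so $\log e<\tfrac14\log\nminusw$. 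The left side is therefore at least $\tfrac14\log\nminusw$. We conclude
$$
\log\nminusw\le 100.8\,H^2\ell(p)\ell(q).
$$

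\textbf{Step 3 (solve for $s$).} Write $s\le \log M/\log p\le (1+o(1))\log\nminusw/\ell(p)$; here $M<2\nminusw$, so $\log M\le\log\nminusw+\log 2$. This gives
$$
s\lesssim 100.8\,H^2\ell(q).
$$
Since $q<(4e+2)/3$, $\ell(q)\le\log e+\log(3/2)$, roughly. The remaining task is to control $H$. Put $X=\log\nminusw/(\ell(p)\ell(q))$, so that $B\le 2X+o(1)$. The inequality of Step 2 reads $X\le 100.8\max\{\log(2X)+0.38,10\}^2$. This self-referential inequality forces $X$ to be bounded by an absolute constant: $X\le 100.8\cdot 10^2$ as long as $\log(2X)+0.38\le 10$, i.e.\ $X\lesssim 1.1\cdot 10^4$, which is consistent. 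Then $H=10$, and
$$
s\le X\ell(q)+\text{small}\le 100.8\cdot 100\cdot\ell(q)\cdot(1+\varepsilon).
$$
That lands near $10^4\log e$. To hit the stated constant $5483$, I will instead use a better row of \cref{prop:quoted-laurent-cor2} (a larger $m$ with a smaller $C_2$ would not help while $H=m$), or sharpen Step 2. For example, use $\log|\Lambda|\le\log e-\tfrac12\log\nminusw$ with $\log e<\tfrac14\log\nminusw$ more carefully, keeping the factor $\tfrac12-\tfrac14$ exact. One can also exploit that $\Lambda$ is tiny relative to $M/\nminusw$, or use that $s\log p\le\log M$ exactly.

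\textbf{Main obstacle.} The main obstacle is the bookkeeping of constants that turns $X\le C\,H^2$ into exactly $s_p<5483\log e$. This requires care with the bound $\ell(q)\le\log(4e/3)$ versus $\log e$ (absorbed using $e\ge32$), with the choice of the Laurent row, and with the ratio $\log\nminusw/\log e>4$ coming from \cref{lem:section5-weight-amplification}. I would first attempt the $m=10$ row, verify that $H=10$ is self-consistent, and then tune using the exact inequalities to bring the constant down to $5483$.
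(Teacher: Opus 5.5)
Your route is the paper's: compare $p$ with a detected prime $q\in\{p_e,q_e\}\setminus\{p\}$, bound $\Lambda=s_p\log p-s_q\log q$ above by the localization and below by \cref{lem:section5-Laurent} with $m=10$, then solve for $s_p$. Taking $p$ to be an arbitrary odd divisor is even cleaner than the paper's opening sentence. As written, however, the proposal does not reach the stated bound, and you do not give a working way to reach it.

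\textbf{Where the loss occurs.} The loss comes from one step. You absorb $\log e$ via $\log e<\tfrac14\log\nminusw$, which turns $\tfrac12\log\nminusw-\log e\le 25.2H^2\ell(p)\ell(q)$ into $\log\nminusw\le 100.8H^2\ell(p)\ell(q)$. This doubles the constant and gives only $s_p\lesssim 10^4\log e$.

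\textbf{Your argument for $H=10$ also fails.} With your inequality you only get $B\lesssim 2X\le 201.6H^2$. The alternative $H>10$ forces merely $B=e^{H-0.38}>150H^2$, since $e^{9.62}/10^2\approx150.6$. So values of $H$ slightly above $10$, up to about $10.3$, are not excluded. Your ``consistency'' check seems to drop the $0.38$: $\log(2X)+0.38\le 10$ means $X\le e^{9.62}/2\approx7.5\cdot10^3$, which is below $100.8\cdot10^2$.

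\textbf{Why your listed remedies do not help.} Keeping ``$\tfrac12-\tfrac14$'' exact is precisely the lossy step. Rows with larger $m$ are worse, as you yourself note. Using $s\log p\le\log M$ exactly changes only lower-order terms.

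\textbf{The fix.} Do not absorb $\log e$; keep it additive:
\[
\log\nminusw<2\log e+50.4H^2\ell(p)\ell(q).
\]
Use $w>e^4$ for a different purpose: it gives $d/\nminusw\le e/\sqrt{\nminusw}<1/e$. Hence $s_p<(\log\nminusw+\eta_e)/\log p$ and $s_q<(\log\nminusw+\eta_e)/\log q$, where $\eta_e=\log(1+1/e)$.

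From this you get
\[
B<100.8H^2+\frac{4\log e+2\eta_e}{\log p\log q}<101H^2,
\]
because $q>e$ and $p\ge3$ make the last term less than $3.7$. Now $H>10$ would give $B>150H^2$, a contradiction, so $H=10$.

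Dividing the first display by $\log p$ then yields
\[
s_p<5040\log q+\frac{2\log e+\eta_e}{\log p}\le 5040\log\frac{4e+2}{3}+\frac{2\log e+\eta_e}{\log 3}<5042\log e+1528.1<5483\log e
\]
for $e\ge32$.

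The margin at $e=32$ is below $1$. So this last step has to be carried out with exact inequalities, not with $\approx$ and $o(1)$ terms.
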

\begin{proof}
Let $p$ be an odd prime greater than $e$ and not exceeding $(4e+2)/3$.
By \cref{cor:moving-prime-detector} $p$ divides $\Phi_e (n,w)$ and let $u=s_p$.
By \cref{lem:section5-comparison-primes} we can choose another odd prime $q \ne p$ such that $p,q \in \cP_e$ and $v=s_q$.
By \cref{cor:prime factors from chain divisitilities} we have that $p^u \leq \nminusw + d$ and $\nminusw < q^v$ and by \cref{cor:str_redund}
we have that $d \leq e \sqrt{w}$. Moreover, for $x>0$ we have that $\log (1+x) <x$
and since $p^u \neq q^v$ we have
$$
0 < \abs{u \log p - v \log q} = \abs{\log \frac{p^u}{q^v}} < \log\left(1+\frac{d}{\nminusw}\right) < \frac{d}{\nminusw}\le\frac e{\sqrt{\nminusw}}
$$
and it implies that
$$
\log|u\log p-v\log q|<\log e-(\log\nminusw)/2.
$$
With the lower bound on $\log|u\log p-v\log q|$ in \cref{lem:section5-Laurent} we now have
\begin{equation}
\label{eq:section5-logz-Laurent}
\log \nminusw<2\log e+50.4H^2\ell(p)\ell(q),
\end{equation}
where $\ell(p)=\log p,\ell(q)=\log q$ since $p,q>2$, $B=\frac{u}{\log q}+\frac{v}{\log p}$, and $H=\max\{\log B+0.38,10\}$
from \cref{lem:section5-Laurent}. Moreover, $\nminusw \ge w>e^4$ by \cref{lem:section5-weight-amplification} and $d\le e\sqrt w$
by \cref{cor:str_redund} imply that $\log(1+d/\nminusw)\le \log(1+e/\sqrt{\nminusw})<\log(1+1/e)$,
and therefore the localization $\nminusw<p^u,q^v\le \nminusw+d$ yields direct bounds for $u$ and $v$:
$$
u\le\frac{\log(\nminusw+d)}{\log p}=\frac{\log\nminusw+\log(1+d/\nminusw)}{\log p}<\frac{\log \nminusw +\eta_e}{\log p},
$$
where $\eta_e =\log(1+1/e)$. Similarly $v<\frac{\log \nminusw +\eta_e}{\log q}$.

We next claim that the $H$ equals $10$. Indeed, \cref{eq:section5-logz-Laurent} with $H\ge 10$ gives
$$
B=\frac{u}{\log q}+\frac{v}{\log p}< \frac{2(\log \nminusw+\eta_e)}{\log p\log q}< 100.8H^2 +\frac{4\log e +2\eta_e}{\log p\log q}<101H^2. $$
The last inequality is due to the facts that $q>e$ and $\eta_e$ decreases as $e$ increases,
and then $\frac{4\log e +2\eta_e}{\log p\log q}<\frac{4\log q +2\eta_{32}}{\log p\log q}<\frac{4}{\log3}+\frac{2\eta_{32}}{\log^2 3}< 3.7< 0.2H^2$.
If $H>10$, then $B=\exp(H-0.38)$, whereas $\exp(x-0.38)/x^2>150$ at $x=10$ and increases for $x>2$. Hence $B>150H^2$, contradicting $B<101H^2$. Therefore, $H=10$ and by \cref{eq:section5-logz-Laurent} we have

\begin{equation}
\label{eq:section5-u-basic}
u<\frac{\log \nminusw +\eta_e}{\log p}< 5040\log q+\frac{2\log e+\eta_e}{\log p}.
\end{equation}

\cref{lem:section5-comparison-primes} induces a comparison of primes in $\mathcal{P}_e$. These primes in \cref{eq:section5-u-basic},
are between $e \geq 3$ and $(4e+2)/3$. Hence,
\begin{align*}
  u<&5040\log q+\frac{2\log e+\eta_e}{\log p}< 5040\log\frac{4e+2}{3}+\frac{2\log e+\eta_e}{\log 3}\\
  =&\left(5040+\frac{2}{\log 3}\right)\log e+ 5040\log\frac{4+2/e}{3}+ \frac{\log(1+1/e)}{\log 3}<5042\log e+ 1528.1\\
  <&5483\log e,
\end{align*}
where $5040\log\frac{4+2/e}{3}+ \frac{\log(1+1/e)}{\log 3}$ is decreasing when $e$ is increasing
and less than $1528.1$ at $e=32$, and $1528.1<441\log32\le 441\log e$.
\end{proof}

\begin{theorem}
\label{thm:section5-unified-tail}
There is no $e$-perfect code in any Johnson graph for $e\ge32$.
\end{theorem}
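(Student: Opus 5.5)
The plan is to play the two bounds of \cref{lem:section5-Lloyd-mass} against each other. The lower bound on the reciprocal mass $\dS$ comes from the size of the ball. The upper bound comes from counting how many prime powers $p^{s_p}$ fit into the short localization interval $(\nminusw,M]$, with $M=\nminusw+d$. Assume an $e$-perfect code exists in J$(n,w)$ with $e\ge 32$. By \cref{lem:section5-weight-amplification} we have $\nminusw\ge w>e^4$. By \cref{cor:str_redund} we have $d\le e\sqrt{w}\le e\sqrt{\nminusw}$, so $\log M<\log\nminusw+\log(1+1/e)$.

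\textbf{Lower bound on $\dS$.} Keep only the last term of \cref{eq:ball_size} and use $\binom{x}{e}\ge (x/e)^e$. This gives $\log\Phi_e(n,w)\ge e\log(w\nminusw/e^2)$. By \cref{eq:section5-w-over-z}, $w>\nminusw\, e/(e+1)$, hence
$$
\log\Phi_e(n,w)\ge e\bigl(2\log\nminusw-2\log e-\log(1+1/e)\bigr).
$$
Since $\log e<\tfrac14\log\nminusw$, dividing by $\log M$ yields
$$
\dS\ge e\cdot\frac{2\log\nminusw-2\log e-\log(1+1/e)}{\log\nminusw+\log(1+1/e)}>\tfrac32 e-\epsilon_e,
$$
where $\epsilon_e$ is a small explicit correction. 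I would sharpen this where needed by keeping $\binom{w}{e}\ge (w-e)^e/e!$, Stirling for $e!$, and the exact ratio $\log e/\log\nminusw<1/4$.

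\textbf{Upper bound on $\dS$.} Split the primes dividing $\Phi_e(n,w)$ by their exponent $s=s_p$. By \cref{lem:number_mult1}, at most $e$ primes have $s=1$, so they contribute at most $e$. The prime $p=2$, whatever its exponent, contributes at most $1/s_2\le 1$ (for $s_2\ge 2$ I will simply absorb it into the additive constant $1$). For $s\ge 2$ and odd $p$, distinct $s$-th powers lying in $(\nminusw,M]$ are separated by at least $s\,\nminusw^{(s-1)/s}$. Hence the number of them is at most
$$
1+\frac{d}{s\,\nminusw^{(s-1)/s}}\le 1+\frac{e}{s}\,\nminusw^{\frac12-\frac{s-1}{s}}.
$$
For $s=2$ this is at most $1+e/2$, contributing at most $e/4+1/2$ to $\dS$. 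For $s\ge3$ it is at most $1+\frac{e}{s}\nminusw^{-1/6}<1+e^{1/3}/s$, contributing at most $1/s+e^{1/3}/s^2$. By \cref{lem:section5-moving-cap}, odd exponents satisfy $s<5483\log e$. Summing over $3\le s<5483\log e$ therefore gives a total of at most $\log(5483\log e)+e^{1/3}(\pi^2/6-5/4)$.

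\textbf{Comparison.} Altogether
$$
\dS\le e+\tfrac e4+\tfrac32+\log(5483\log e)+0.4\,e^{1/3},
$$
while $\dS>\tfrac32 e-\epsilon_e$. The margin is $e/4$ against a term of order $e^{1/3}+\log\log e$. This gives a contradiction for all sufficiently large $e$, and after explicit numerics I expect it to hold for all $e$ beyond a modest threshold.

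\textbf{Main obstacle.} The hard step is the numerical range just above $32$. There $e/4\approx 8$, while the harmonic tail $\log(5483\log e)\approx 9.7$ alone is already comparable. To close this gap I would first use $\nminusw>e^4$ more aggressively in the $s\ge3$ count: for $s\ge 7$ the interval is far shorter than the spacing, so each exponent class $s$ contains at most one prime power. For most such $s$ it contains none, since consecutive candidates $p^s$ and $q^{s'}$ in the interval are forced apart by the Laurent bound of \cref{lem:section5-Laurent}. If that is not enough, I would replace the trivial bound on $\Phi_e$ by the exact product in \cref{eq:section5-Lloyd-factorization}, which gives $\log\Phi_e(n,w)$ close to $e\log(w\nminusw)-2\log e!$ and so raises the lower bound toward $2e$. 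As a last resort, I would handle the finitely many remaining $e$ directly with the table in \cref{lem:section5-comparison-primes}.
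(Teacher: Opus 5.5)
Your architecture is the paper's: a lower bound on $\dS$ from the size of the ball, played against an upper bound by exponent classes built on \cref{lem:number_mult1} and \cref{lem:section5-moving-cap}. As written, however, the argument does not prove the statement.

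\textbf{The numbers do not close for $32\le e\le 52$.} Your upper bound $\dS\le\tfrac54e+\tfrac32+\log(5483\log e)+0.4e^{1/3}$ is about $52.6$ at $e=32$. Your lower bound there is only about $47.8$. With your estimates the contradiction only sets in around $e\ge 53$. The leftover range is not a formality. It contains $32,33,34,36,42,50$, which survive every criterion of \cref{sec:sieve}. The table of \cref{lem:section5-comparison-primes} excludes no radius by itself; it only feeds $w>e^4$ and the exponent cap.

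\textbf{Your remedies do not close it as stated.}
\begin{itemize}
\item The Laurent idea is unsupported. \cref{lem:section5-Laurent} bounds $|u\log p-v\log q|$ from below, and the paper already spends this on capping $s_p$. Nothing in it makes most exponent classes empty.
\item The Lloyd product of \cref{lem:section5-Lloyd-factorization} does not push the lower bound toward $2e$. Only $\nminusw>e^4$ is known, so $\log\Phi_e(n,w)/\log M$ is minimized near $\nminusw=e^4$. That gives roughly $1.62e\approx 51.96$ at $e=32$, still below your $52.6$. It overtakes your bound only if the harmonic sum is evaluated essentially exactly, and then by less than $0.3$.
\end{itemize}

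\textbf{The missing idea is on the upper-bound side.} You spaced $s$-th powers using only $q-p\ge 1$. Distinct odd primes differ by at least $2$, so $p_{j+1}^2-p_j^2>4p_j>4\sqrt{\nminusw}$. With $d\le e\sqrt{\nminusw}$ this gives $N_2<1+e/4$. The exponent-$2$ contribution is then below $\tfrac12+\tfrac e8$ instead of $\tfrac12+\tfrac e4$. This widens your margin from $e/4$ to $3e/8$, a gain of $4$ at $e=32$. That extra room absorbs the harmonic tail $\log(5483\log e)$, which the paper keeps too.

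Tighten the other classes the same way:
\begin{itemize}
\item Odd cubes are spaced by more than $6\nminusw^{2/3}$, so exponent $3$ contributes less than $\tfrac13+e^{1/3}/18$.
\item For each $s\ge 4$ at most one odd prime occurs, since $2s\nminusw^{1-1/s}\ge 8\nminusw^{3/4}>e\sqrt{\nminusw}$ when $\nminusw>e^4$. These classes contribute only $\sum_{4\le s<5483\log e}1/s<\log(5483\log e)-\tfrac56$.
\item The prime $2$ contributes $1/s_2<\log 2/\log\nminusw<\log2/(4\log e)$, not $1$.
\end{itemize}

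Altogether $\dS<e+\tfrac e8+\tfrac{e^{1/3}}{18}+\log(5483\log e)+\tfrac{\log 2}{4\log e}<1.441e$ for every $e\ge 32$. Your own lower bound $\dS>e(6\log e-\eta)/(4\log e+\eta)\ge 1.494e$, with $\eta=\log(1+1/e)$, already beats this. The paper instead uses the Lloyd product and Stirling to get $\dS>\tfrac32 e$, with room to spare.
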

\begin{proof}
Assume, for contradiction, that there exists an $e$-perfect code in J$(n,w)$ for some $e\ge 32$.
We then prove the nonexistence by giving the incomparable bounds of the reciprocal mass $\dS$.

First, \cref{lem:section5-Lloyd-factorization} forces the lower bound on $\dS$.
By \cref{lem:section5-weight-amplification,eq:section5-w-over-z,cor:str_redund}, we have that
\begin{equation*}
%\label{eq:section5-tail-weight}
\nminusw \ge w>e^4,\qquad \frac w\nminusw>\frac e{e+1}, \qquad \text{and} \qquad d\le e\sqrt{\nminusw},
\end{equation*}
respectively. Therefore, $d/\nminusw\le e/\sqrt{\nminusw}<1/e$, $M=\nminusw+d<\nminusw(1+1/e)$, and $w-h_i>w-d>c_e\nminusw$,
where $c_e=\frac e{e+1}-\frac 1e$ and $h_i$ is defined in \cref{lem:section5-Lloyd-factorization}.
By \cref{lem:section5-Lloyd-factorization} we have that $\Phi_e(n,w)>(c_e\nminusw^2)^e/(e!)^2$, and hence
\begin{equation}
\label{eq:first_part_L}
\log\Phi_e (n,w)>2e\log \nminusw+e\log c_e-2\log(e!).
\end{equation}
We turn now to \cref{eq:section5-mass-bounds} to obtain a lower bound on $\frac{\dS}{e}$ using \cref{eq:first_part_L},
the well-known Stirling formula $\log(e!)\le e\log e-e+\frac12\log e+1$ and $\log M<\log \nminusw+\log(1+1/e)$. These lead to
the following equalities and inequalities:
\begin{equation}
\label{eq:section5-strict-lower}
\begin{aligned}
\frac{\dS}{e} \ge \frac{\log\Phi_e(n,w)}{e\log M}>& \frac{2\log \nminusw+\log c_e-2\log e+2-(\log e+2)/e}{\log \nminusw+\log(1+1/e)}\\
=& 2+\frac{\log c_e-2\log e+2-(\log e+2)/e-2\log(1+1/e)}{\log \nminusw+\log(1+1/e)},
\end{aligned}
\end{equation}
where $\log c_e-2\log e+2-(\log e+2)/e-2\log(1+1/e)<-2\log e+2<0$. The right side of \cref{eq:section5-strict-lower}
is increasing when $\nminusw$ is increasing and its minimum value is at $\nminusw=e^4$. Therefore,
\begin{equation}
\label{eq:last_forS}
\begin{aligned}
\frac{\dS}{e}>& \frac{2\log \nminusw+\log c_e-2\log e+2-(\log e+2)/e}{\log \nminusw+\log(1+1/e)}> \frac{6\log e+\log c_e+2-(\log e+2)/e}{4\log e+\log(1+1/e)}\\
=& \frac{3}{2}+ \frac{\log c_e+2-(\log e+2)/e-3\log(1+1/e)/2}{4\log e+\log(1+1/e)}.
\end{aligned}
\end{equation}
%where the numerator $\log c_e+2-(\log e+2)/e-3\log(1+1/e)/2$ is increasing by taking the
%derivative and more than $0$ at $e=32$. Indeed, taking the derivative gives
%\[\bigl(1/e^2+1/(e+1)^2\bigr)/c_e+ (\log e+1)/e^2+ 3/(2e^2(1+1/e))>0.\]
When $e \geq 32$ \cref{eq:last_forS} implies the lower bound
%We claim that the right side exceeds $3/2$. After multiplying by the positive denominator, this follows from
%$$
%\log \nminusw +2\log c_e-4\log e+4-\frac{2(\log e+2)}e-3\log(1+1/e)>0.
%$$
%Using $\log \nminusw >4\log e$, it is enough that
%$$
%F(e) \triangleq 4+2\log c_e-\frac{2(\log e+2)}e-3\log(1+1/e)\ge0.
%$$
%Here $c_e=e/(e+1)-1/e$ is increasing, $(\log e+2)/e$ is decreasing, and $\log(1+1/e)$ is decreasing.
%Hence $F$ is increasing for $e\ge32$; direct substitution gives $F(32)>3.439>0$. Thus,
\begin{equation}
\label{eq:section5-tail-lower}
\dS>\frac{3}{2}e.
\end{equation}

To obtain an incomparable upper bound on $\dS$, recall that $\dS=\sum_{p\mid\Phi_e(n,w)}\frac1{s_p}$ is the
reciprocal mass of exponents, we separate the contributions of different exponent classes in $\dS$.
Note that small exponents contribute the most, so we count the possible primes for exponents $1,2,3$ separately
and then handle all exponents greater than $3$ together.
By \cref{lem:number_mult1} we have that at most $e$ primes have exponent $1$.
Let $N_2$ be the number of odd prime divisors with exponent $2$.
If their bases are $p_1<\cdots<p_{N_2}$, then for each $1\le j<N_2$, the mean-value theorem implies that
$$
p_{j+1}^2-p_j^2>2p_j(p_{j+1}-p_j)\ge 4p_j >4\sqrt{\nminusw},
$$
where the last inequality is from the localization $\nminusw<p_j^2$ in \cref{cor:prime factors from chain divisitilities}.
Summing over all $N_2-1$ gaps $p_{j+1}^2-p_j^2$ gives
$$
4(N_2-1)\sqrt{\nminusw}<p_{N_2}^2-p_1^2< d\le e\sqrt{\nminusw},
$$
where $p_{N_2}^2-p_1^2< d$ is due to the fact that $p_j^2$ lie in the localization interval $(\nminusw,\nminusw+d]$ whose length is less than $d$,
while $d\le e\sqrt{\nminusw}$ by \cref{cor:str_redund}. Thus, $N_2<1+e/4$
and hence the odd-prime exponent-$2$ contribution $N_2/2$ in $\dS$ is less than $1/2+e/8$.
%If $N_2$ many odd prime divisors have exponent $2$,
%the same square-spacing argument as above gives
%$$
%4(N_2-1)\sqrt{\nminusw}<d\le e\sqrt{\nminusw},
%$$
%so their contribution $N_2/2$ is less than $1/2+e/8$.

Let $N_3$ be the number of odd prime divisors with exponent $3$. If their bases are $q_1<\cdots<q_{N_3}$, then similar to the exponent-$2$ case,
$$
q_{j+1}^3-q_j^3>3q_j^2(q_{j+1}-q_j)\ge 6q_j^2 >6\nminusw^{2/3},
$$
where the last inequality is from the localization $\nminusw<q_j^3$. Summing over all $q_{j+1}^3-q_j^3$ gives
$$
6(N_3-1)\nminusw^{2/3}<q_{N_2}^3-q_1^3< d\le e\sqrt{\nminusw}.
$$
Hence by $\nminusw>e^4$, $N_3<1+e\nminusw^{-1/6}/6<1+e^{1/3}/6$. Hence, the odd-prime exponent-$3$ contribution $N_3/3$ in $\dS$
is less than $1/3+e^{1/3}/18$.
%If $N_3$ odd prime divisors have exponent $3$, consecutive localized cubes differ by more than $6 \nminusw^{2/3}$.  Hence
%$$
%6(N_3-1) \nminusw^{2/3}<d\le e\sqrt{\nminusw},
%$$
%and, using $\nminusw>e^4$,
%$$
%N_3<1+\frac16e^{1/3}.
%$$
%Their contribution $N_3/3$ is therefore less than $1/3+e^{1/3}/18$.

For every fixed exponent $s\ge4$, at most one odd prime can occur. Indeed, if two odd primes $p<q$ had the same exponent $s$,
then by $p^s>\nminusw$ and $\nminusw>e^4$ we have
$$
q^s-p^s>sp^{s-1}(q-p)>2s \nminusw^{1-1/s}\ge 8\nminusw^{3/4}>e\sqrt{\nminusw}\ge d,
$$
contradicting the localization $\nminusw<p^s,q^s\le\nminusw+d$. By \cref{lem:section5-moving-cap}, these exponents
satisfy $s< 5483\log e$. Then by the standard bound of the harmonic sum $\sum_{i=1}^{k}1/k\le1+\log k$,
the odd-prime exponent-$\ge4$ contribution in $\dS$ is
$$
\sum_{\substack{p\mid\Phi_e(n,w)\\p\text{ odd},\ s_p\ge4}}\frac1{s_p}\le \sum_{4\le s<5483\log e}\frac1s< \log(5483\log e)-\frac12-\frac13.
$$
Finally, the remaining contribution of the prime $2$, when $2\mid \Phi_e(n,w)$, by $2^{s_2}>\nminusw$ and $\nminusw>e^4$ is $1/s_2<\log2/\log \nminusw<\log2/(4\log e)$. Altogether,
\begin{equation}
\label{eq:section5-tail-upper}
\begin{aligned}
\dS=&\sum_{p\mid\Phi_e(n,w)}\frac1{s_p}= \sum_{s_p=1}\frac1{s_p}+\sum_{p\text{ odd},\ s_p=2}\frac1{s_p}+\sum_{p\text{ odd},\ s_p=3}\frac1{s_p}+\sum_{p\text{ odd},\ s_p\ge4}\frac1{s_p}+\frac1{s_2}\\
<&e+\left(\frac 12+\frac e8\right)+\left(\frac 13+\frac{e^{1/3}}{18}\right)+\left(\log(5483\log e)-\frac12-\frac13\right)+\frac{\log2}{4\log e}\\
=&e+\frac e8+\frac{e^{1/3}}{18}+\log(5483\log e)+\frac{\log2}{4\log e} .
\end{aligned}
\end{equation}
When $e \geq 32$ \cref{eq:section5-tail-upper} implies
$$
\dS \leq  1+\frac18+\frac{1}{18e^{2/3}}+\frac{\log(5483\log e)}{e}-\frac{\log2}{4e\log e}< 1.441e
$$
contradicting \cref{eq:section5-tail-lower}.
\end{proof}

\begin{theorem}
\label{thm:global-complete-corrected}
If an $e$-perfect code exists in a Johnson graph, then $e\in\{1,2,4,9,10,12,16\}$.
\end{theorem}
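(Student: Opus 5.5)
The statement is obtained by assembling earlier results, so the plan is a case split on $e$. For $e\ge 32$, \cref{thm:section5-unified-tail} already rules out any $e$-perfect code. It remains to handle $1\le e\le 31$. For these radii \cref{cor:small-radii-prime-square} tells us that, among $1\le e<100$, the only radii not excluded are
$$
1,2,4,9,10,12,16,32,33,34,36,42,50,64,65,66,72,81,82,84,88.
$$
The values $\ge 32$ in this list are covered by \cref{thm:section5-unified-tail}. What survives below $32$ is exactly $\{1,2,4,9,10,12,16\}$, which is the claim.

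So the real content is to check that every $e\le 31$ outside this set is excluded by results proved earlier. The tools are \cref{cor:square-factor-e-plus-one}, \cref{thm:e-plus-2-mod-8}, \cref{thm:p-adic-interval-criterion}, \cref{thm:firstSieve}, \cref{thm:secondSieve} and \cref{thm:thirdSieve}. Each of them forces a prime square dividing $\Phi_e(n,w)$, which contradicts the square-freeness in \cref{thm:sufficient condition}.

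I would go through Table~\ref{tab:small-radius-prime-square-details} row by row, restricted to $e\le 31$:
\begin{itemize}
\item The $2^2$ row covers $3,5,6,7,11,13,14,15,19,21,22,23,25,27,28,29,30,31$.
\item The $3^2$ row adds $8,17,20,26$.
\item The $5^2$ row adds $18,24$.
\end{itemize}
Together these give every $e\le 31$ except $1,2,4,9,10,12,16$. For each listed congruence I would confirm that the hypotheses of the cited theorem hold. For example, $e=28$ with $\gamma=4$ modulo $32$ needs $k=5$ and $\gamma_5=4\le\lfloor (16+2)/4\rfloor$. Also $e=20\equiv -7 \pmod{27}$ satisfies $e>7$ as \cref{thm:thirdSieve} requires.

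One caveat concerns radius $e=1$. The earlier theorems in Section~\ref{sec:strength} assume $e\ge 2$, but $1$ belongs to the exceptional set anyway, so nothing needs proving there. The trivial code in J$(2w,w)$ with $w=2e+1$ is excluded by the standing convention that perfect means nontrivial.

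The main obstacle is only bookkeeping: making sure each small radius really satisfies the stated hypotheses. The delicate ones are the side conditions, such as $p=5$ with $\kappa\le 2$ in \cref{thm:firstSieve}, and $\ell\le 2^{k-3}$ with $k\ge 3$ in \cref{thm:secondSieve}, where $e=25$ is treated modulo $32$. The large-radius part needs no further work, since it is \cref{thm:section5-unified-tail}.
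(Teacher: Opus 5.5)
Your proposal is correct and follows the paper's proof: split at $e=32$, use \cref{cor:small-radii-prime-square} for $e<32$ and \cref{thm:section5-unified-tail} for $e\ge 32$. Your row-by-row check of Table~\ref{tab:small-radius-prime-square-details} only unpacks that corollary, and your accounting of the $24$ excluded radii below $32$ is accurate, as are the spot checks ($\gamma_5=4$ for $e=28$, $e=20\equiv-7\pmod{27}$).
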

\begin{proof}
By \cref{cor:small-radii-prime-square}, when $1\le e<32$ only the seven radii in $\{1,2,4,9,10,12,16\}$
survive the prime-square criteria. Every radius $e\ge32$
is excluded by \cref{thm:section5-unified-tail}. Hence only radii $e\in\{1,2,4,9,10,12,16\}$ remain unsolved.
\end{proof}

The last seven cases are considered in~\cite{ZhZh26} to settle the existence problem of nontrivial codes in the Johnson scheme.

\end{document}